\DeclarePairedDelimiterX\set[1]\lbrace\rbrace{\def\given{\;\delimsize\vert\;}#1}
\newcommand{\bleu}[1]{\textcolor{blue}{#1}}
\definecolor{cadmiumgreen}{rgb}{0.0, 0.42, 0.24}
\newtheorem{prop}{Proposition}[section] 
\newtheorem{cor}[prop]{Corollary} 
\newtheorem{lem}[prop]{Lemma}
\newtheorem{theorem}[prop]{Theorem}
\theoremstyle{definition}
\newtheorem{rem}[prop]{Remark}
\newcommand{\Card} {\mathop{\mathrm{Card}}}
\DeclareMathOperator{\GL}{\mathrm{GL}}
\newcommand{\SL} {\mathop{\mathrm{SL}}}
\newcommand{\cl} {\mathop{\mathrm{Cl}}}
\newcommand{\Gal} {\mathop{\mathrm{Gal}}}
\newcommand{\End} {\mathop{\mathrm{End}}}
\DeclareMathOperator{\sgn}{\mathrm{sgn}}
\newcommand{\BC} {\mathbb{C}}
\newcommand{\BF} {\mathbb{F}}
\newcommand{\BR} {\mathbb{R}}
\newcommand{\BZ} {\mathbb{Z}}
\newcommand{\BN} {\mathbb{N}}
\newcommand{\hTag}{h_{\mathrm{Tag}}}
\renewcommand{\ker}{\mathop{\mathrm{Ker}}}
\newcommand{\ie}{\emph{i.e.}}
\begin{document}

\title{Drinfeld singular moduli, hyperbolas, units}
\author{Bruno Angl\`es, C\'ecile Armana, Vincent Bosser, Fabien Pazuki}

\address{Laboratoire de Math\'ematiques Nicolas Oresme,
Universit\'e de Caen-Normandie, BP 5186
14032 Caen, France}
\email{bruno.angles@unicaen.fr}

\address{Universit\'e de Franche Comt\'e, CNRS, LmB (UMR 6623), F-25000 Besan\c{c}on, France}
\email{cecile.armana@univ-fcomte.fr}

\address{Laboratoire de Math\'ematiques Nicolas Oresme,
Universit\'e de Caen-Normandie, BP 5186
14032 Caen, France}
\email{vincent.bosser@unicaen.fr}

\address{Department of Mathematical Sciences, University of Copenhagen,
Universitetsparken 5, 
2100 Copenhagen \O, Denmark}
\email{fpazuki@math.ku.dk}

\thanks{The authors were supported by the IRN MaDeF (CNRS), IRN GandA (CNRS), and by ANR-20-CE40-0003 Jinvariant. They also benefitted from support of Laboratoire de Math\'ematiques Nicolas Oresme (Caen) and Laboratoire de Math\'ematiques de Besan\c{c}on.
The final steps of this work where performed while the last author was in residence at Institut Mittag-Leffler in Djursholm, Sweden, during the semester in Analytic Number Theory: tack s\aa{} mycket! We thank Tim Trudgian for interesting feedback. We dedicate this work to Bas Edixhoven and to Andrea Surroca, who passed away way too early.} 

\maketitle
\vspace{-0.9cm}
\begin{center}

\vspace{0.2cm}
\today
\end{center}

\begin{abstract}
Let $q\geq2$ be a prime power and consider Drinfeld modules of rank 2 over $\mathbb{F}_q[T]$. We prove that there are no points with coordinates being Drinfeld singular moduli, on a family of hyperbolas $XY=\gamma$, where $\gamma$ is a polynomial of small degree. This is an effective Andr\'e-Oort theorem for these curves. We also prove that there are at most finitely many Drinfeld singular moduli that are algebraic units, for every fixed $q\geq2$, and we give an effective bound on the discriminant of such singular moduli. We give in an appendix an inseparability criterion for values of some classical modular forms, generalising an argument used in the proof of our first result.
\end{abstract}

{\flushleft
\textbf{Keywords:} Drinfeld singular moduli, Andr\'e-Oort, Diophantine equations.\\
\textbf{Mathematics Subject Classification:} 11G09, 11J93. }

\thispagestyle{empty}

\section{Introduction}

In \cite{Andre}, Andr\'e proves that an irreducible plane curve that is not a vertical line, not a horizontal line, and not a modular curve, contains at most finitely many points with coordinates $(j_1,j_2)$, where $j_1$ and $j_2$ are \textit{singular moduli}, \ie{} they are $j$-invariants of CM elliptic curves $E_1$ and $E_2$.

Together with work of Edixhoven \cite{Ed98} and work of Oort \cite{Oo97}, this led to a vast generalisation called the Andr\'e-Oort conjecture, which opened deep questions on Shimura varieties, where many new ideas have amazed mathematicians in recent years.



This first result of Andr\'e was made effective by K\"uhne in \cite{Kuhne}, where his Theorem 2 gives an explicit bound on the discriminant of the endomorphism rings associated to the CM elliptic curves $E_1$ and $E_2$.

In \cite{BMZ}, Bilu, Masser, and Zannier publish their own effective proof of Andr\'e's result, and apply in particular their method to the explicit example of the hyperbola $XY=1$, where not only one obtains finiteness: it turns out that there is no solution in singular moduli at all.

Motivated by this example, Masser asked the question of whether a singular modulus, which is always an algebraic integer, can be a unit. Indeed, singular moduli which would be solutions to $XY=1$ would in particular be units. Habegger \cite{Hab15} first answered that there is at most finitely many singular moduli which are also units, which was generalised by Li \cite{Li21} to values of modular polynomials at singular moduli. Bilu, Habegger, K\"uhne \cite{BHK} finally proved that there is no singular modulus which is also a unit. This was generalised by Campagna in \cite{Cam21} to $S$-unit singular moduli, when $S$ is the infinite set of primes congruent to $1$ modulo~$3$.

Inspired by the classical analogy between elliptic curves and Drinfeld modules of rank 2, we are interested in counterparts of these results in the Drinfeld setting. The equivalent of Andr\'e's result is due to Breuer, see \cite{Bre05, Bre07, Bre12}. 

We consider Drinfeld modules over $A=\mathbb{F}_q[T]$, where $q$ is a prime power. If $\Phi$ is a Drinfeld module and $\Lambda\subset\BC_{\infty}$ is the associated lattice, we identify $\End_{\BC_{\infty}}(\Phi)$ with the subring
$\{z\in\BC_{\infty}\mid z\Lambda\subset\Lambda\}$ of $\BC_{\infty}$. A Drinfeld module $\Phi$ of rank $2$ is said to have complex multiplication (or to be CM for short)
if $\End_{\BC_{\infty}}(\Phi)\not=A$. The ring $\End_{\BC_{\infty}}(\Phi)$ is then a free $A$-module of rank $2$.
We will say that $\alpha\in\BC_{\infty}$ is a \emph{singular modulus} if there exists a CM Drinfeld module $\Phi$ of rank $2$ such that $j(\Phi)=\alpha$. It is known that singular moduli are integral over $\BF_q[T]$. 

In our first result, we provide an effective solution to the Andr\'e-Oort problem for the family of curves given by the equations $XY=\gamma$ for a polynomial $\gamma$ with degree less than or equal to $q^2-2$: points with coordinates that are singular moduli do not exist on these curves! Let $\mathbb J_{CM}= \{ j(\Phi)\mid $ $\Phi $ is a CM $A$-Drinfeld module of rank two$\}.$ We prove the following theorem:

\begin{theorem}\label{Effect1-oddchar-intro}  Let $\gamma\in \overline{\mathbb  F}_q[T]\setminus \{0\}$. 
Assume $\deg \gamma\leq q^2-2.$
Then the equation $j_1 \, j_2=\gamma$ has no solutions for $j_1,j_2\in \mathbb J_{CM}.$
\end{theorem}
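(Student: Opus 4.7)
The plan is to derive a contradiction from the existence of $j_1,j_2\in \mathbb J_{CM}$ satisfying $j_1 j_2=\gamma$, by combining a precise $\infty$-adic estimate at CM points with a Frobenius/inseparability argument in characteristic~$p$. Write $j_i=j(\Phi_i)=j(\tau_i)$ for $\Phi_i$ a rank~$2$ CM Drinfeld module, $\tau_i\in\Omega=\BC_\infty\setminus \BF_q(T)_\infty$ a CM point in the standard fundamental domain $\mathcal F$ of $\GL_2(A)$, and $\mathcal O_i\subset K_i$ the associated CM order with discriminant $D_i\in A$.

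The first step is an analytic estimate at $\infty$. Gekeler's $u$-expansion of $j$ at the cusp has the shape
\[
j(z)=-\tilde{\pi}^{\,1-q^2}\,u(z)^{-(q-1)}\bigl(1+O(u(z))\bigr),
\]
which for $\tau\in\mathcal F$ with $|\tau|_\infty$ large enough yields the identity $v_\infty(j(\tau))=q^2+(q-1)v_\infty(\tau)$, together with a uniform lower bound on $v_\infty(j(\tau))$ for arbitrary $\tau\in\mathcal F$. Injecting this into $v_\infty(j_1)+v_\infty(j_2)=v_\infty(\gamma)\ge 2-q^2$ confines both $|\tau_i|_\infty$ to a bounded range, hence bounds $\deg D_i$ from above, leaving only finitely many pairs $(\mathcal O_1,\mathcal O_2)$ to handle.

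The second and crucial step is the inseparability argument. For each remaining pair, the equation $j_1 j_2=\gamma$ is equivalent to the vanishing of the resultant $\Res_X\bigl(H_{\mathcal O_1}(X),\, X^{h_2}H_{\mathcal O_2}(\gamma/X)\bigr)$, where $H_{\mathcal O_i}\in A[X]$ denotes the Hilbert class polynomial of $\mathcal O_i$ and $h_i$ its degree. To rule this out for every $\gamma\in\overline{\BF}_q[T]$ with $\deg\gamma\le q^2-2$, I would exploit Frobenius stability: $\gamma^{q^n}$ is again a polynomial of the same $T$-degree, and $j_i^{q^n}$ is the $j$-invariant of the Frobenius twist $\Phi_i^{(q^n)}$, a CM Drinfeld module with the same endomorphism order $\mathcal O_i$. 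Iterating $x\mapsto x^q$ therefore produces an infinite family of potential solutions $(j_1^{q^n},j_2^{q^n})$ on the hyperbolas $XY=\gamma^{q^n}$, and the goal is to show that the joint existence of this whole family would force a purely inseparable algebraic relation between singular moduli, contradicting the separability of the ring class field extensions $H_{\mathcal O_i}/K_i$. This is the inseparability criterion announced in the abstract and generalized in the appendix to classical modular forms.

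The main obstacle is carrying out this second step. The analytic estimate is a routine consequence of the $u$-expansion, and in principle each small-discriminant pair could be settled by an explicit computation with Hilbert class polynomials; the real content is to formulate a single characteristic $p$ obstruction strong enough to kill every remaining pair uniformly, without computing any class polynomial by hand.
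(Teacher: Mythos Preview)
Your proposal has genuine gaps in both steps.

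\textbf{Step 1: the analytic estimate is of the wrong shape.} The relation between $v_\infty(j(\tau))$ and $v_\infty(\tau)$ is not affine. Brown's theorem (Theorem~\ref{Bro} here) says that for $z\in\mathcal F$ with $\log_q|z|$ close to an integer $n\ge 1$ one has $|j(z)|=q^{q^{n+1}}$ (inert case) or $q^{\frac{q+1}{2}q^n}$ (ramified case); the exponent is \emph{exponential} in $\log_q|z|$. Your formula $v_\infty(j(\tau))=q^2+(q-1)v_\infty(\tau)$ is not the correct asymptotic and would not let you conclude. In fact one does not merely ``bound $\deg D_i$'': Brown's formula is precise enough to give the contradiction directly, as explained below.

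\textbf{Step 2: the Frobenius/inseparability strategy is confused.} The map $x\mapsto x^q$ on $\overline{\BF}_q[T]$ multiplies the $T$-degree by $q$, so $\gamma^{q^n}$ does \emph{not} have the same degree as $\gamma$; you cannot iterate to stay under the bound $q^2-2$. If instead you mean the coefficientwise Frobenius on $\overline{\BF}_q$ (fixing $T$), you only get finitely many Galois conjugate equations, not an infinite family, and no contradiction follows. The inseparability criterion in the appendix is used in the paper only in characteristic $2$, and only to dispose of the single inseparable field $K=\BF_q(\sqrt{T})$; it is not a device for eliminating the general case.

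\textbf{What the paper actually does.} The argument is a direct $\infty$-adic inequality, with no case-by-case computation of class polynomials. Using the Galois equivariance (Lemma~\ref{Ord}), one may replace one of the CM points by a specific representative, e.g.\ $z_1=\sqrt{D_1}$, so that $|j(z_1)|=q^{q^{n+1}}$ with $n=\tfrac12\deg D_1\ge 1$. Since $q^{n+1}>q^2-2\ge\deg\gamma$, this forces $|j(z_2)|<1$; by Brown's theorem this happens only when $|z_2|=1$ and $z_2$ is close to an elliptic point $e\in\BF_{q^2}\setminus\BF_q$, with $|j(z_2)|=q^q|z_2-e|^{q+1}$. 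The missing ingredient in your outline is the key lower bound $|z_2-e|\ge |D_2|^{-1/2}\ge q^{-n}$ (Lemma~\ref{Vince}), which prevents $|j(z_2)|$ from being too small. Combining yields $q^{n+1}+q-n(q+1)\le q^2-2$, a contradiction for every $n\ge1$. The odd-$\deg D_1$ case is even easier, and the characteristic $2$ proof follows the same pattern once the inseparable quadratic field has been excluded via Schweizer's lemma.
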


This implies in particular that there are no Drinfeld singular moduli satisfying the equation $XY=1$. The proof is partly based on the work of Brown \cite{Brown} and on the work of Hsia and Yu \cite{HsYu98}.
Note that we also give a proof in characteristic~2. The method is different from what is existing in the elliptic curve case, and more intrinsic to the function field setting. We also give an explicit example (see Remark \ref{Hayes8}) to show that the bound $q^2-2$ on the degree of $\gamma$ is sharp in general. An interesting technical step in characteristic $2$, that is used in our proof, is an inseparability criterion of values of certain Drinfeld modular forms. What is necessary to obtain the main results here is the CM case, which is due to Schweizer \cite{Sch97}, but as we found a way to generalise the result to non-CM values, we included it as an appendix.

We then move to the related Masser question. Drinfeld singular moduli satisfying $XY=1$ would actually be units. We do not rule out the existence of Drinfeld singular moduli that are units, nevertheless we prove that for each fixed $q\geq 2$, there are at most finitely many of them, and we give an upper bound on their discriminant. Here is the statement:

\begin{theorem}\label{mainth}
Let $q\geq 2$ be a prime power. There are at most finitely many singular moduli that are algebraic units over $\BF_q(T)$.

More precisely, if such a singular modulus exists, then it must have complex multiplication by an order with discriminant $D$ satisfying 
$$
\log_q^+ \log_q \sqrt{\vert D \vert} \leq  \frac{2400 (q+1)}{\ln 2},
$$
where for $x\in{\BR}$ we set $\log_q^+(x)=\log_q(\max\{1,x\})$.
\end{theorem}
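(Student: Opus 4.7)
The plan is to combine the unit condition, which yields an arithmetic identity on archimedean valuations of the Galois conjugates of $\alpha$, with analytic estimates on the Drinfeld $j$-function at CM points and an effective transcendence-type inequality to control the worst case.

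Suppose $\alpha=j(\Phi)$ is a singular modulus which is an algebraic unit over $A=\BF_q[T]$. Set $\mathcal{O}=\End_{\BC_\infty}(\Phi)$, $D=\disc(\mathcal{O})$, and let $H$ be the field generated over $K=\BF_q(T)$ by $\alpha$ and its Galois conjugates. Since $\alpha$ is a unit, so is each conjugate $\alpha_\sigma$, hence $N_{H/K}(\alpha)\in A^{\times}=\BF_q^{\times}$. Taking the unique infinite valuation of $K$ gives the identity
\[
\sum_{\sigma:H\hookrightarrow\BC_\infty}\log_q|\alpha_\sigma|_\infty=0.
\]
Parametrising the conjugates as $\alpha_\sigma=j(\tau_\sigma)$ via CM points $\tau_\sigma$ representing the ideal classes of $\mathcal{O}$ in a fundamental domain for $\GL_2(A)$ acting on the Drinfeld upper half-plane, this recasts as a balance condition on the values of $j$ at these CM points.

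Next, from the $u$-expansion $j(\tau)=u(\tau)^{-(q-1)}+\dots$ around infinity, with $u(\tau)=1/e_C(\tilde\pi\tau)$, one checks that $\log_q|j(\tau)|_\infty$ grows essentially linearly in $|\tau|_i$ when the imaginary part is large, and is bounded in absolute value by an absolute constant when $|\tau|_i$ is moderate, except possibly where $\tau$ lies close to a zero of $j$. Reduced-form representatives produce a principal CM point $\tau_0$ with $|\tau_0|_i\asymp\sqrt{|D|}$, whose contribution to the sum above is of order $\sqrt{|D|}$. The main technical task is then to rule out that some shallow CM conjugate $\tau_\sigma$ gets exponentially close to a zero of $j$, which would produce an unboundedly negative $\log_q|j(\tau_\sigma)|_\infty$. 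Writing $j=g^{q+1}/\Delta$ and using that the zeros of the Eisenstein series $g$ and of the discriminant $\Delta$ on $\Omega$ are algebraic of controlled height, this is handled by an effective Liouville / linear forms in logarithms argument in the spirit of Hsia--Yu for Drinfeld modules, yielding a lower bound $\log_q|j(\tau_\sigma)|_\infty\geq -C(q)\cdot F(\log_q\sqrt{|D|})$ for an effectively computable mild function $F$.

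Inserting both estimates into the unit identity forces $\sqrt{|D|}$ to be bounded in terms of $q$. The tower-of-logs shape of the final inequality $\log_q^+\log_q\sqrt{|D|}\leq 2400(q+1)/\ln 2$ is characteristic of such transcendence estimates. I expect the hard part to be the careful bookkeeping of constants at the transcendence step, since naive applications typically yield much weaker numerical bounds than $2400(q+1)/\ln 2$; one likely has to exploit the special arithmetic structure of CM points (e.g.\ their relation to Frobenius eigenvalues on the associated $t$-motive) to keep the constants under control.
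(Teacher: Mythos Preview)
Your outline starts in the right place: the unit condition forces the sum of $\log_q|\alpha_\sigma|_\infty$ over conjugates to vanish, and one must balance the large positive contribution from the principal CM point against the negative contributions from conjugates landing near the zeros of $j$. But the two key technical inputs you invoke are not the ones that actually work here, and this is a genuine gap.

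First, no transcendence or linear-forms-in-logarithms argument is used or needed to bound how close a CM point $\tau_\sigma$ can get to an elliptic point $e\in\BF_{q^2}\setminus\BF_q$. The relevant lower bound $|z-e|\ge |D|^{-1/2}$ is a two-line algebraic computation using the conjugate $\bar z$ (Lemmas~\ref{Vince} and~\ref{Vince2}). This alone, however, only gives $\log_q|j(\tau_\sigma)|\ge q-\tfrac{q+1}{2}\log_q|D|$, which is far too weak: summed over $h(\mathcal O)$ conjugates it swamps the single positive term of size $\asymp\sqrt{|D|}$. What the paper actually does is \emph{count} how many conjugates can lie $\varepsilon$-close to an elliptic point, via congruence conditions on the reduced-form coefficients $(a,b)$ (Corollary~\ref{b-mod-a}, Lemma~\ref{cardC}). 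Optimising in $\varepsilon$ turns the balance identity into an upper bound for $h(\alpha)$ of the shape $(q+1)\log_q(\sqrt{|D|}/h(\mathcal O))+O(\log_q|D|/\log_q\log_q|D|)$ (Corollary~\ref{maj-hjb}).

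Second, and more seriously, this upper bound by itself does not close the argument, because $\sqrt{|D|}/h(\mathcal O)$ can be large (Brauer--Siegel). The paper needs a second, independent \emph{lower} bound $h(\alpha)\gg\log_q|D|$ (Corollary~\ref{weibound}), obtained by comparing the Weil height to the Taguchi differential height and then invoking Wei's Kronecker-limit formula and Ihara's bound on the Euler--Kronecker constant. Playing the upper bound against the \emph{maximum} of this lower bound and the trivial one $h(\alpha)\ge\sqrt{|D|}/h(\mathcal O)$ (Proposition~\ref{easybound}) is what produces the final inequality~(\ref{laclef}) and the double-log bound. Your sketch has no counterpart to this second lower bound, and without it the argument cannot terminate.
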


The idea of the proof stems from the strategy of Bilu, Habegger, K\"uhne, in the case of elliptic curves. Ran \cite{ZR} also independently announced the finiteness of Drinfeld singular moduli that are units, under some hypothesis. Our proof is unconditional, in particular includes the case of characteristic $2$, which required additional methods. All the constants in our bounds are given explicitly, which leads to the effective control in Theorem \ref{mainth}. The dependence in $q$, however, does not seem to indicate that the finiteness result in Theorem \ref{mainth} could be uniform in $q$: this remains an open question.

Let us describe the strategy of proof: the main idea is to bound from above the discriminant of the endomorphism ring of such Drinfeld modules $\Phi$. To reach this goal, we use height theory and produce an upper bound on the height of $\Phi$, given in Corollary~\ref{maj-hjb}, and no less than two lower bounds on the height of $\Phi$, given in Proposition \ref{easybound} and in Corollary \ref{weibound}. These results are compared in the final step of the proof, where the tension between these bounds leads to an upper bound on the discriminant, which we show is enough to conclude. The effectivity in Theorem \ref{mainth} relies on the powerful inequality (\ref{laclef}).

\section{Preliminaries and basic properties}

This section introduces the notation and the preliminary results that will be needed for the proof of Theorems
\ref{Effect1-oddchar-intro} and \ref{mainth}.

In the whole text, we denote by $A=\BF_q[T]$ the polynomial ring in the indeterminate $T$ over the finite field $\BF_q$, by
$k=\BF_q(T)$ its quotient field, by $k_{\infty}=\BF_q((1/T))$ the completion of $k$ at the infinite place $\infty=1/T$,
and by $\BC_{\infty}$ the completion of an algebraic closure of $k_{\infty}$.
We denote by $\vert . \vert$ the absolute value on $\BC_{\infty}$ normalized by $\vert T \vert=q$, and
for $z\in\BC_{\infty}^{\times}$ we set $\deg z = \log_q\vert z\vert$, where $\log_q$ is the logarithm to the base $q$. Thus, for any non zero $a\in A$, $\deg a$ is the degree of the polynomial $a$. We use the symbol $\ln$ to denote the logarithm to the base $e$, i.e. satisfying $\ln e =1$. We denote by $\mathbb{N}=\{0,1,\ldots\}$ the set of non-negative natural integers.

If ${\mathcal K}$ is a subfield of $\BC_\infty$, we denote by $\overline{{\mathcal K}}$ its algebraic closure in $\BC_\infty$, and by ${\mathcal K}^{sep}$ its separable closure in $\overline{{\mathcal K}}$.
It will be convenient to introduce the function $\sgn: \overline{{\BF}}_q((1/T))^{\times}\rightarrow {\BF}_q^{\times}$,
defined as follows: for $x\in \overline{{\BF}}_q((1/T))$, $\sgn(x)$ is the first non zero coefficient of the $1/T$-expansion of $x$.
In particular, if $a\in A$ is non zero, $\sgn(a)$ is nothing else than the leading coefficient of the polynomial $a\in \BF_q[T]$.
We will also denote by $A_+$ the set of non zero polynomials $a\in A$ such that $\sgn(a)=1$ (i.e., monic polynomials).

\subsection{Drinfeld modules and Drinfeld modular forms}\label{Drinfeld intro}

For this paragraph, we refer the reader to classical references such as \cite{GekLNM}, \cite{Ge}, \cite{Goss}, \cite{Pap}.

Let $\Phi$ be an $A$-Drinfeld module of rank $2$ over $\BC_{\infty}$. It is given by a twisted polynomial of the form
$$ \Phi_T = T + g \tau + \Delta \tau^2 \in \BC_{\infty}\{ \tau \}$$
where $g$ and $\Delta$ are in $\BC_{\infty}$ and $\Delta \neq 0$. The $j$-invariant of $\Phi$ is defined as
$$ j(\Phi) = \frac{g^{q+1}}{\Delta}.$$
It characterises the isomorphism class of the Drinfeld module $\Phi$ over $\BC_{\infty}$.

Let $\Omega=\BC_{\infty}\setminus k_{\infty}$ be the Drinfeld upper half-plane.
For $z\in\BC_{\infty}$, define
$$\vert z\vert_i=\displaystyle \inf_{x\in k_{\infty}}\vert z-x\vert=\min_{x\in k_{\infty}}\vert z-x\vert
\qquad \mbox{\rm and}\qquad
\vert z\vert_A=\displaystyle \inf_{a\in A}\vert z-a\vert=\min_{a\in A}\vert z-a\vert.
$$

We will denote by $\mathcal F$ the fundamental domain for the action of $\GL(2,A)$ on $\Omega$, that is:
$$
{\mathcal F}=\{z\in\Omega\mid \vert z \vert=\vert z \vert_i\ge 1\}.
$$
Note that for $z\in\mathcal F$, we have $\vert z\vert_i=\vert z\vert_A$.

To any $z\in \Omega$, one can attach the rank $2$ lattice $\Lambda_z = A \oplus A z$ in $\BC_{\infty}$ as well as a rank~$2$ Drinfeld module $\Phi^{(z)} : A  \to \BC_{\infty} \{ \tau \}$ such
that $\ker \exp_{\Phi^{(z)}}= \Lambda_z$ where $\exp_{\Phi^{(z)}}$ is the exponential function attached to the Drinfeld module. The Drinfeld module $\Phi^{(z)}$ associated to $\Lambda_z$ is given by:
$$\Phi^{(z)}_T= T+g(z)\tau+\Delta(z) \tau^2 \in \BC_{\infty} \{ \tau \}$$
where $g(z)$ and $\Delta(z)$ are in $\BC_{\infty}$ and $\Delta(z)\neq 0$.
The $j$-invariant function is now defined as:
$$\forall z \in \Omega,\quad j(z)=j(\Phi^{(z)}) =\frac{g(z)^{q+1}}{\Delta(z)}.$$
It is known that $g$ and $\Delta$ are Drinfeld modular forms for $\GL_2(A)$ of weight $q-1$ and $q^2-1$, respectively (see \cite{Ge}). Also the $j$-invariant is a Drinfeld modular function for $\GL_2(A)$ of weight $0$.

We know that the zeros of the $j$-invariant inside the fundamental domain $\mathcal F$ are exactly the elements of $\BF_{q^2}\setminus\BF_q$ and that they are of order $q+1$ (see
\cite[(5.15)]{Ge}). We will denote by ${\mathcal E}=\BF_{q^2}\setminus\BF_q$ the set of these zeros (these are the so-called \emph{elliptic points} of $\Omega$).

\subsection{Complex multiplication}
\label{complexmultiplication}

To prove Theorem~\ref{mainth} we will need results on complex multiplication that we present here.


Let $\Phi$ be a Drinfeld $A$-module of rank $2$, and let $\Lambda_{\Phi}\subset\BC_{\infty}$ be its associated lattice. We identify
the endomorphism ring $\End_{\BC_{\infty}}(\Phi)$ with the subring
$\{z\in\BC_{\infty}\mid z\Lambda_{\Phi}\subset\Lambda_{\Phi}\}$ of $\BC_{\infty}$. The Drinfeld module $\Phi$ is said to have complex multiplication
(or to be CM for short) if $\End_{\BC_{\infty}}(\Phi)\not=A$. The ring $\End_{\BC_{\infty}}(\Phi)$ is then a free $A$-module of rank $2$.
We will say that a point $z\in\Omega$ is CM if the associated Drinfeld module $\Phi^{(z)}$ is CM.
Finally, we say that $\alpha\in\BC_{\infty}$ is a \emph{singular modulus} if there exists a CM Drinfeld module $\Phi$ of rank $2$ such that $j(\Phi)=\alpha$. Thus, $\alpha$ is a singular modulus if and only if $\alpha$ is of the form $\alpha=j(z)$ for some CM point $z\in\Omega$ (then all points $z\in\Omega$ such that $\alpha=j(z)$
are CM points since they correspond to isomorphic Drinfeld modules).
We will denote by $\mathbb J_{CM}$ the set of all singular moduli.

Now, let $K/k$ be a quadratic extension of $k$ with $K\subset \BC_{\infty}$.
This extension is said to be \emph{imaginary} if there is no embedding of $K$ in $k_{\infty}$, or
equivalently if the place $\infty=1/T$ is not split in $K$. If $K=k(z)$, this means that $z\notin k_{\infty}$.
If $K/k$ is an imaginary quadratic extension, we will denote by ${\mathcal O}_K$ the ring of integers of $K$.
Recall that an \emph{order} in $K$ is an $A$-subalgebra of ${\mathcal O}_K$ whose field of fractions is equal to $K$: It is then
a free $A$-submodule of ${\mathcal O}_K$ of rank $2$.
For such an order $\mathcal O$ we will denote by $\cl({\mathcal O})$ its ideal class group 
and by $h(\mathcal O)=\Card(\cl({\mathcal O}))$ its class number, see e.g. \cite{Ros}, Chapter 17, Definition on page 315.

We have the following result, which is valid in every characteristic:

\begin{lem}\label{CM1}
Let $z\in \Omega$ be a CM point. Then:

(i) The field $K=k(z)$ is an imaginary quadratic field and ${\mathcal O}_z=\End(\Phi^z)$ is an order in~$K$;

(ii) The element $j(z)$ is integral over $A$;

(iii) The extension $K(j(z))/K$ is abelian with $\Gal(K(j(z))/K)\simeq \cl({\mathcal O}_z)$;

(iv) If $K/k$ is separable, then $k(j(z))/k$ is separable and $[k(j(z)):k]=[K(j(z)):K]$. 

\end{lem}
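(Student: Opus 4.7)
For (i), I unwind the identification $\End(\Phi^{(z)}) = \{w \in \BC_\infty : w\Lambda_z \subset \Lambda_z\}$ where $\Lambda_z = A \oplus Az$. Any $w$ in this ring satisfies $w = a_0 + b_0 z$ (from $w \cdot 1 \in \Lambda_z$) and $wz = a_1 + b_1 z$ with $a_i, b_i \in A$; eliminating $w$ yields $b_0 z^2 + (a_0 - b_1)z - a_1 = 0$. For $w \notin A$ one must have $b_0 \neq 0$, so $z$ is quadratic over $k$ and $K = k(z)$ is a quadratic extension; since $z \in \Omega$ forbids $z \in k_\infty$, the place $\infty$ does not split in $K$, so $K/k$ is imaginary. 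The same computation shows $\mathcal{O}_z \subset \Lambda_z$, so $\mathcal{O}_z$ is a finitely generated $A$-submodule of $K$ strictly containing $A$, hence an $A$-order in $K$, proving (i).

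For (ii) and (iii), I would invoke the foundational CM theory for Drinfeld modules. Integrality of $j(z)$ over $A$ in (ii) can be derived using the Drinfeld modular polynomials $\Phi_\ell(X,Y) \in A[X,Y]$, which are monic in each variable, applied to an $\ell$-isogeny $\Lambda_z \to \mathfrak{l}^{-1}\Lambda_z$ for a prime $\mathfrak{l}$ of $\mathcal{O}_z$ above a suitably chosen split prime $\ell \in A$. The statement (iii) is the Drinfeld analogue of the main theorem of complex multiplication due to Hayes, which identifies $K(j(z))$ with the ring class field of $\mathcal{O}_z$ and its Galois group over $K$ with $\cl(\mathcal{O}_z)$.

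For (iv), separability of $k(j(z))/k$ follows from that of $K(j(z))/k$, which in turn holds because $K(j(z))/K$ is abelian by (iii) and $K/k$ is separable by hypothesis. For the degree equality, it suffices to show that the $\Gal(\overline{k}/k)$-orbit of $j(z)$ coincides with its $\Gal(\overline{k}/K)$-orbit. If $\sigma \in \Gal(\overline{k}/k)$, then $\sigma(j(z)) = j(\Phi^{(z),\sigma})$ where $\Phi^{(z),\sigma}$ has coefficients $\sigma(g(z))$ and $\sigma(\Delta(z))$, and a direct computation shows $\End(\Phi^{(z),\sigma}) = \sigma(\mathcal{O}_z)$. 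Writing $\mathcal{O}_z = A + c\,\mathcal{O}_K$ for some $c \in A_+$ makes it clear that $\mathcal{O}_z$ is preserved by any such $\sigma$ (which fixes $A$ pointwise and sends $\mathcal{O}_K$ to itself because $\sigma(K) = K$ and $\mathcal{O}_K$ is the unique maximal $A$-order in $K$); thus $\sigma(j(z))$ is a singular modulus with CM by $\mathcal{O}_z$, and by (iii) it is already a $K$-conjugate of $j(z)$. This forces $[k(j(z)):k] = [K(j(z)):K]$. The step I expect to require the most care is the integrality claim in (ii): unlike the quick verification $\mathcal{O}_z \subset \mathcal{O}_K$ by Cayley--Hamilton on the rank-two $A$-module $\Lambda_z$, integrality of $j(z)$ itself genuinely relies on the modular polynomial machinery and the structure of CM lattices.
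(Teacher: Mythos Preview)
Your proposal is correct and essentially reconstructs what the cited references contain; the paper itself does not give a proof but simply refers to \cite[Section~4]{Gek83} and \cite[Theorem~3]{Sch97}. Your argument for (i) is the standard lattice computation, and your outlines for (ii) and (iii) accurately point to the modular-polynomial and ring-class-field ingredients that those references supply.

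One small remark on (iv): your argument that the $\Gal(\overline{k}/k)$-orbit of $j(z)$ coincides with its $\Gal(\overline{k}/K)$-orbit uses not just the isomorphism in (iii) but the companion fact that \emph{every} singular modulus with CM by $\mathcal{O}_z$ is a $K$-conjugate of $j(z)$. This is part of the same CM package (and is made explicit later in the paper as Lemma~\ref{Ord}(\ref{Ord-2}) and Lemma~\ref{Ord2}(\ref{Ord2-2})), but strictly speaking it is a bit more than the bare statement of (iii). It would be cleaner to phrase (iv) as a consequence of that transitivity together with your observation that $\sigma(\mathcal{O}_z)=\mathcal{O}_z$ for all $\sigma\in\Gal(\overline{k}/k)$, which is exactly what you do.
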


\begin{proof} This is a consequence of \cite[Section 4]{Gek83} and \cite[Theorem~3]{Sch97}.
\end{proof}

If $\alpha = j(z)$ is a singular modulus, the field $K=k(z)$ and the ring $\mathcal{O}_z$ depend only on the class of $z$ modulo $\mathrm{GL}_2(A)$ (hence on $\alpha$ only), and will be called respectively the CM field and CM ring of $\alpha$.

Note that when $q$ is odd, every quadratic extension $K$ of $k$ in separable. When $q$ is even, there is only one
inseparable quadratic extension of $k$, namely $K=\BF_q(\sqrt{T})$. When the extension $K/k$ is separable,
one can define the notion of discriminant of an order ${\mathcal O}$ in $K$,
see e.g. \cite[Section~2]{PC}: This is a non zero element of $A$ 
which is actually defined only up to a square in $\BF_q^{\times}$.
There is no canonical choice for the discriminant in odd characteristic, but in characteristic two the discriminant
can be chosen monic (since every element of $\BF_q^{\times}$ is a square). In the sequel, we will denote by $D_{\mathcal O}$ any
choice of a discriminant for ${\mathcal O}$ when the characteristic is odd, and the unique monic discriminant of ${\mathcal O}$
when the characteristic is even.
When ${\mathcal O}={\mathcal O}_K$, the discriminant $D_{{\mathcal O}_K}$ is the discriminant of $K$ and will simply be
denoted by $D_K$.

As in zero characteristic (see e.g. \cite[\S 7]{Cox}), given an imaginary quadratic extension $K/k$ and an
order ${\mathcal O}\subset {\mathcal O}_K$, one can define its \emph{conductor} as 
for instance the unique element $f\in A_+$ such that $\displaystyle {\mathcal O}_K/{\mathcal O}\cong A/fA$. We then have
${\mathcal O}=A+f{\mathcal O}_K$ and, if $K/k$ is separable, $D_{\mathcal O}=f^2 D_K$.

The theory of complex multiplication now splits into two cases: the case of odd characteristic and the case of characteristic $2$. We will start with the odd case and deal with the even case afterwards.


\subsubsection{Case $q$ odd}

\

In this section, we assume that $q$ is odd. In that case, every imaginary quadratic extension $K$ of $k$ is separable
and has the form $K=k(\sqrt{D})$,
where $D\in A$ is 
not a square in $k$. It is easy to see that if a non zero element $D \in A$ is given, 
then the field $K=k(\sqrt{D})$ is an imaginary quadratic extension of $k$ if and only if the following
condition holds:
\begin{equation}
\begin{cases}\label{cond-imaginary}
\text{(i) $\deg D$ is odd}\\
\text{or}  \\
\text{(ii) $\deg D$ is even and the leading coefficient of $D$ is not a square in $\BF_q^{\times}$}.
\end{cases}
\end{equation}
Note that Conditions (\ref{cond-imaginary}) (i) and (ii) are also equivalent to the facts that $\infty=1/T$ is ramified and
inert in $K/k$, respectively, see Proposition 14.6 in \cite{Ros}.
When Condition (\ref{cond-imaginary}) holds, the $A$-module
$A[\sqrt{D}]=A+A\sqrt{D}$ is an order in $K$ whose discriminant is $D$, and this is actually the unique order in $K$ whose
discriminant is $D$. In particular, we have $\mathcal{O}_K=A[\sqrt{D_K}]$.
For any fixed choice of a square root $\sqrt{D}$ in $\overline{k}$, let us denote by $S_D$ the following set of elements of
$k(\sqrt{D})$: 
$$S_D = \left\{ \frac{-b+\sqrt{D}}{2a} \mid a\in A_+,\, b\in A;\, \exists c\in A, b^2-4ac=D,\,
\vert b \vert < \vert a\vert \leq \vert c\vert,\, \gcd(a,b,c)=1 \right\}.$$

Note that although the discriminant of an order is defined only up to a square in $\BF_q$, 
we require here that $b^2-4ac$ is \emph{equal} to $D$.

We will need the following result:

\begin{lem}\label{Ord}${}$
Let $D$ be an element in $A$ such that $K=k(\sqrt{D})$ is an imaginary  quadratic extension of $k$. Denote by ${\mathbb J}_{CM}(D)$ the set of singular moduli having complex multiplication by the order of $K$ of discriminant $D$, and let $z\in  S_D$. Then:
\begin{enumerate}
    \item\label{Ord-1} $|z|=| z|_i=| z|_A\geq 1$.
    \item\label{Ord-2} 
    $\displaystyle {\mathbb J}_{CM}(D)=\{ \sigma (j(z))\mid \sigma \in {\rm Gal}(K^{sep}/K)\}=j(S_D).$
    \item\label{Ord-3} If $\alpha_1,\ldots,\alpha_m$ denote the conjugates of $j(z)$ over $k$, we have:
    $${\mathbb J}_{CM}(D)=\{\alpha_1,\ldots,\alpha_m\}.$$
\end{enumerate}
\end{lem}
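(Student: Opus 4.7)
The plan is to establish the three parts in order, combining ultrametric analysis on the Drinfeld upper half-plane with classical CM theory (Lemma~\ref{CM1}) and the reduction theory of imaginary binary quadratic forms over $A$. For part~(1), the imaginary quadratic assumption guarantees that $D$ is not a square in $k_\infty$, so the absolute value extends uniquely to $k_\infty(\sqrt D)$ and satisfies $|\bar z|=|z|$. Vieta applied to $aX^2+bX+c=0$ gives $z\bar z=c/a$, whence $|z|^2=|c|/|a|\geq 1$. The identity $(z-x)(\bar z-x)=(ax^2+bx+c)/a$ reduces the claim $|z|_i=|z|$ to showing $|ax^2+bx+c|\geq|c|$ for all $x\in k_\infty$, which I would carry out by a short case analysis on $\deg x$. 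The delicate case is $|ax^2|=|c|$, where a cancellation of leading terms would force the ratio $-\gamma/\alpha$ of the leading coefficients of $c$ and $a$ to be a square $\xi^2\in\BF_q^\times$; but then the leading coefficient $-4\alpha\gamma=(2\alpha\xi)^2$ of $D$ would itself be a square, contradicting condition~(\ref{cond-imaginary})(ii) (which applies because the subcase $|ax^2|=|c|$ forces $\deg c-\deg a$ to be even, so $\deg D=\deg a+\deg c$ is even). Combined with $|z-0|=|z|$ and $A\subset k_\infty$, this yields $|z|=|z|_i=|z|_A\geq 1$.

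For part~(2), I first verify $j(S_D)\subseteq\mathbb J_{CM}(D)$ by checking $\End(\Phi^{(z)})=A[\sqrt D]$, the order of $K$ of discriminant~$D$. The inclusion $\supseteq$ is immediate from $\sqrt D\cdot 1=2az+b\in\Lambda_z$ and $\sqrt D\cdot z=-bz-2c\in\Lambda_z$; for $\subseteq$, writing an endomorphism as $u+v\sqrt D$ with $u,v\in k$ and expanding its action on $1$ and $z$ in the basis $\{1,z\}$ of $\Lambda_z$ yields $u\in A$ together with $av,\,bv,\,cv\in A$, and the primitivity $\gcd(a,b,c)=1$ furnishes a Bezout identity $ra+sb+tc=1$, whence $v=r(av)+s(bv)+t(cv)\in A$. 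The reduction theory of imaginary binary quadratic forms over $A$ (the function-field analog of Gauss's theory, cf.~\cite{Gek83}) gives a surjection $S_D\twoheadrightarrow\cl(\mathcal O_D)$, $z\mapsto[A+Az]$, and CM theory provides a bijection $\cl(\mathcal O_D)\leftrightarrow\mathbb J_{CM}(D)$, so $j(S_D)=\mathbb J_{CM}(D)$. Finally, Lemma~\ref{CM1}(iii) gives $|\Gal(K(j(z))/K)|=h(\mathcal O_D)=|\mathbb J_{CM}(D)|$, and this Galois orbit is contained in $\mathbb J_{CM}(D)$ because Galois preserves the CM ring, so the three sets coincide.

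Part~(3) will follow directly from Lemma~\ref{CM1}(iv): since $K/k$ is separable in odd characteristic, $m=[k(j(z)):k]=[K(j(z)):K]=h(\mathcal O_D)=|\mathbb J_{CM}(D)|$, and each $\Gal(\overline k/k)$-conjugate of $j(z)$ has CM ring $A+A\sqrt D$ (because $\sigma(\sqrt D)=\pm\sqrt D$) and therefore lies in $\mathbb J_{CM}(D)$; a cardinality count closes the argument. I expect the main obstacle to be the fine case analysis in part~(1), where the ultrametric estimate must precisely leverage the imaginary quadratic hypothesis on $D$ to exclude the cancellation of leading terms---a phenomenon specific to the function field setting, where $\BF_q^\times$ contains nontrivial squares without direct classical analog.
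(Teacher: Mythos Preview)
Your argument is correct. The paper's own proof consists entirely of citations---\cite[Lemma~12]{PC} for part~(1), and \cite[\S4]{Gek83} together with Lemma~\ref{CM1}(iii)--(iv) for parts~(2) and~(3)---so your treatment follows the same underlying structure but is strictly more self-contained. For part~(1), you carry out explicitly the ultrametric case analysis on $|ax^2+bx+c|$ that \cite{PC} packages; the delicate subcase $|ax^2|=|c|$ is handled correctly (cancellation of leading terms would force $\sgn(D)=(2\alpha\xi)^2$ to be a square in $\BF_q^\times$, contradicting condition~(\ref{cond-imaginary})(ii), while when $\deg D$ is odd this subcase cannot even arise since $|ax^2|=|c|$ would make $\deg c-\deg a$ even). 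For parts~(2) and~(3), you spell out what the paper leaves as ``direct computations'': the verification $\End(\Phi^{(z)})=A[\sqrt D]$ via the primitivity condition $\gcd(a,b,c)=1$, and the cardinality matching through Lemma~\ref{CM1}(iii)--(iv). The paper's approach buys brevity; yours buys transparency, making visible exactly where the imaginary hypothesis on $D$ and the primitivity of the triple $(a,b,c)$ enter.
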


\begin{proof}
Part (1) follows for example from \cite[Lemma 12]{PC}. 
Part (2) and Part (3) are a consequence of \cite {Gek83}, Section~4, from Lemma~\ref{CM1}(iv) and direct computations. See also the proof of Theorem 7.5.21 of \cite{Pap}.
\end{proof}

\subsubsection{Case $q$ even}\label{CMcar2}
In this section we assume that $q$ is a power of two. In that case, the situation is different depending on whether the
quadratic extension is separable or inseparable over $k$.

Let us first treat the separable case. In that case, the basic facts on separable quadratic extensions of $k$ are due to
Hasse \cite{Hasse} and Chen \cite{Che}.

Let $K/k$ be a separable imaginary extension of degree two. Then by \cite{Hasse} (see also \cite{Che}), there exists an element
$\xi\in\overline{k}$ such that $K=k(\xi)$ and which satisfies an equation of the form
\begin{equation}\label{xi}
\xi^2+\xi =\frac{B}{C},
\end{equation}
where $B$ and $C$ are elements in $A$ that can be chosen
such that the following conditions hold:
\begin{enumerate}
    \item[(i)] $C\in A_+$, $\gcd(B,C)=1$ and $\deg B\geq \deg C$;
    \item[(ii)] The prime factorization of $C$ has the form $C=\prod_{i=1}^{r}P_i^{e_i}$, where $P_1, \ldots, P_r$ are distinct monic
irreducible polynomials and $e_i\equiv 1\pmod{2}$ for all $i\in\{1,\ldots,r\}$;
    \item[(iii)]
    \begin{enumerate}
        \item\label{ramified} If $\deg B>\deg C$ then $\deg B-\deg C \equiv 1 \pmod{2}$;
        \item\label{inert} If $\deg B=\deg C$ then the polynomial $X^2+X+\sgn(B)$ is irreducible in $\mathbb F_q[X]$.
    \end{enumerate}
\end{enumerate}
Note that $C=1$ is allowed here and satisfies Condition (ii) with $r=0$ by convention (empty product).
In the case (iii.a) the place $\infty$ is ramified in $K/k$, and in the case (iii.b) the place $\infty$ is inert.
For $i=1, \ldots, r,$ let $n_i$ be the smallest integer such that $n_i\geq \frac{e_i}{2}$ and let $G= \prod_{i=1}^r P_i^{n_i}$.
Then we have, by \cite{Che}:
  $$\mathcal{O}_K= A\oplus AG\xi\quad \mbox{\rm and}\quad D_K=G^2. $$
 
It is not difficult to deduce from this (e.g. as in \cite{Cox}) that for any given $f\in A_+$,
there exists a unique order $\mathcal{O}$ in $K$ whose conductor is $f$, namely, $\mathcal{O}=A\oplus AfG\xi$.
The discriminant of $\mathcal{O}$ is then $D_{\mathcal{O}}=f^2G^2=f^2D_K$.
Note that contrary to the odd characteristic case, the discriminant does not determine
the quadratic field $K$: In characteristic two, there are several quadratic fields and orders with the same
discriminant. However, if the field $K$ is fixed (e.g. by the choice of an element $\xi$ as above), an order in $K$ is uniquely determined by its discriminant. Note also that the discriminant of an order $\mathcal{O}$ is here always monic and a square in $A$.

We will further set $\mathrm{rad}(G)=P_1\cdots P_r,$ thus $G^2= C \, \mathrm{rad}(G)$. When $\xi$ is chosen as above and $f\in A_+$,
we denote by $S_f(\xi)$ the following set of elements of $K$ (note that the elements $B$ and $G$ are uniquely determined
by $\xi$):
$$ S_f(\xi) = \set[\bigg]{ 
\frac{b+fG\xi}{a} \given  \begin{array}{l} a\in A_+,\,  b\in A; \exists  c\in A, ac= b^2+bfG+f^2\mathrm{rad}(G)B, \\
\vert b\vert<\vert a\vert \leq \vert c\vert,\, \gcd(a,b,f)=1 . \end{array} }. $$


\noindent Then, we have the following result, which is similar to Lemma~\ref{Ord}:
\begin{lem}\label{Ord2}${}$\par
Let $\xi\in\overline{k}$ be as above, and put $K=k(\xi)$. Let $f\in A_+$, and denote by
${\mathbb J}_{CM}(\xi,f)$ the set of singular moduli having complex multiplication by the order of $K$ of
conductor $f$. Let $z\in S_f(\xi)$. Then:
\begin{enumerate}
    \item\label{Ord2-1} $\vert\xi \vert\ge 1$ and $|z|=| z|_i=| z|_A\geq 1$.    
    \item\label{Ord2-2} ${\mathbb J}_{CM}(\xi,f)=\{ \sigma (j(z))\mid \sigma \in {\rm Gal}(K^{sep}/K)\}=j(S_f(\xi)).$
    \item\label{Ord2-3} If $\alpha_1,\ldots,\alpha_m$ denote the conjugates of $j(z)$ over $k$, we have:
    $${\mathbb J}_{CM}(\xi,f)=\{\alpha_1,\ldots,\alpha_m\}.$$
\end{enumerate}
\end{lem}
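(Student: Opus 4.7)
For Part (1), I would first establish $|\xi|\ge 1$ directly from the relation $\xi^2+\xi=B/C$. Since $\deg B\ge \deg C$, we have $|B/C|\ge 1$; if $|\xi|<1$ then the ultrametric inequality would give $|\xi^2+\xi|\le |\xi|<1$, a contradiction. For $z=(b+fG\xi)/a\in S_f(\xi)$, I would then derive the minimal polynomial of $z$ over $k$ by a short direct calculation using characteristic two. Multiplying $\xi^2+\xi=B/C$ by $f^2G^2$ and using $G^2=C\cdot\mathrm{rad}(G)$ yields $(fG\xi)^2+(fG)(fG\xi)=f^2\mathrm{rad}(G)B$, and substituting $fG\xi=az-b$ produces (after reduction in characteristic $2$) the identity $a^2z^2+afGz=b^2+bfG+f^2\mathrm{rad}(G)B=ac$, hence the polynomial relation
\[
aX^2+fGX+c=0.
\]
The Galois conjugate $\bar{z}$ (image under $\xi\mapsto \xi+1$) satisfies $\bar{z}=z+fG/a$ and $z\bar{z}=c/a$. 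Since $K/k$ is imaginary, $K_\infty/k_\infty$ is the unique quadratic extension of $k_\infty$, so $|\cdot|$ is Galois invariant and $|z|=|\bar{z}|$. Therefore $|z|^2=|c|/|a|\ge 1$ by the condition $|a|\le |c|$, proving $|z|\ge 1$.

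Still in Part (1), the equalities $|z|=|z|_i=|z|_A$ require comparing $|z-x|$ to $|z|$ for $x\in k_\infty$, resp.\ $x\in A$. For any $x\in k_\infty$, the Galois invariance gives $|z-x|=|\bar{z}-x|$, so $|fG/a|=|z-\bar{z}|\le |z-x|$; combined with the two possible subcases according to whether $z-x$ and $\bar{z}-x$ have different absolute values or not, and using $|b|<|a|$ (so that $|z|=|fG\xi/a|$ when $|\xi|>|b/fG|$), one obtains $|z-x|\ge |z|$ as in the argument of \cite[Lemma 12]{PC}. For the minimum over $A$, since $b/a$ has $|b/a|<1$ and $|z|\ge 1$, subtracting any $n\in A$ cannot decrease $|z|$, giving $|z|_A=|z|$.

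For Parts (2) and (3), I would invoke the theory of CM in characteristic two developed in \cite{Gek83} and the general dictionary recalled in Lemma~\ref{CM1}. Concretely, for $z\in S_f(\xi)$ one verifies first that $\mathrm{End}_{\BC_\infty}(\Phi^{(z)})=\{w\in \BC_\infty:w\Lambda_z\subset\Lambda_z\}$ is exactly the order $\mathcal{O}_f=A\oplus AfG\xi$ of conductor $f$: the inclusion $\mathcal{O}_f\subset\mathrm{End}(\Lambda_z)$ follows from $fG\xi=az-b\in\Lambda_z$, while the opposite inclusion uses the coprimality condition $\gcd(a,b,f)=1$ to rule out larger orders (if $\mathcal{O}_f\subsetneq\mathcal{O}'\subset\mathcal{O}_K$ stabilised $\Lambda_z$, the conductor of $\mathcal{O}'$ would divide some proper divisor of $f$, contradicting $\gcd(a,b,f)=1$). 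Hence $j(S_f(\xi))\subset \mathbb J_{CM}(\xi,f)$. Conversely, the standard argument from \cite[\S 4]{Gek83} shows every CM lattice with endomorphism ring $\mathcal{O}_f$ is $\mathrm{GL}_2(A)$-equivalent to some $\Lambda_z$ with $z\in S_f(\xi)$ (by replacing a proper $\mathcal{O}_f$-ideal by an element of $S_f(\xi)$ through a Hermite-type reduction), giving the reverse inclusion. Finally, Lemma~\ref{CM1}(iii) identifies $\mathrm{Gal}(K(j(z))/K)$ with $\mathrm{Cl}(\mathcal{O}_f)$, which matches the cardinality of $\mathbb J_{CM}(\xi,f)$, proving the equality with the Galois orbit of $j(z)$ over $K$ in Part~(2). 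Part (3) then follows from Lemma~\ref{CM1}(iv): since $K/k$ is separable and $[k(j(z)):k]=[K(j(z)):K]$, the Galois conjugates of $j(z)$ over $k$ coincide with those over $K$.

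The main technical obstacle is the verification of the endomorphism ring statement in Part (2): writing down exactly why the coprimality $\gcd(a,b,f)=1$ forces the endomorphism ring to be $\mathcal{O}_f$ (and not a strictly larger order) requires a careful conductor computation specific to the characteristic two setting, where $G$ and $\mathrm{rad}(G)$ replace the role played by $\sqrt{D_K}$ in the odd case.
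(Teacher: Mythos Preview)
Your proposal is correct in outline and actually more detailed than the paper's own proof, which simply cites \cite{Gek83}, \cite{Sch97}, \cite{HsYu98} Lemma~5.1, and \cite{Pap} Theorem~7.5.21. Your direct computation of the quadratic relation $aX^2+fGX+c=0$, the identification $z\bar z=c/a$, $z+\bar z=fG/a$, and the use of Lemma~\ref{CM1}(iii)--(iv) for Parts~(2) and~(3) are exactly what lies behind those references.

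One spot in Part~(1) is imprecise. You write that one should split into subcases ``according to whether $z-x$ and $\bar z-x$ have different absolute values or not''; but Galois invariance already gives $|z-x|=|\bar z-x|$ for every $x\in k_\infty$, so there is no such dichotomy. The bound $|z-x|\ge |fG/a|$ that you obtain is only $|z|/|\xi|$, which in the ramified case (where $|\xi|>1$) is strictly weaker than what you need. The clean way to finish is to split instead on whether $\infty$ is ramified or inert in $K/k$. In the ramified case $\deg B-\deg C$ is odd, so $v_\infty(\xi)\in\tfrac12+\BZ$ and hence $|z|=|fG\xi/a|\in q^{\frac12+\BZ}$, whereas $|x|\in q^{\BZ}$ for $x\in k_\infty$; thus $|x|\ne|z|$ and $|z-x|=\max(|z|,|x|)\ge|z|$ immediately. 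In the inert case $|\xi|=1$ and $\xi-e\in k_\infty$ for some $e\in\BF_{q^2}\setminus\BF_q$ (as in Lemma~\ref{majb-even}(2)); writing $z-x=(b/a+(fG/a)(\xi-e)-x)+(fG/a)e$ and using that $|\alpha+e\beta|=\max(|\alpha|,|\beta|)$ for $\alpha,\beta\in k_\infty$ gives $|z-x|\ge|fG/a|=|z|$. This is the content of \cite{HsYu98}, Lemma~5.1, which is the even-characteristic analogue of the reference \cite[Lemma~12]{PC} you cite.
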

\begin{proof}This is a consequence of \cite {Gek83}, paragraph 4 (see also \cite{Sch97}, Theorem 3), \cite{HsYu98}, Lemma 5.1,  and direct computations. See also the proof of Theorem 7.5.21 of \cite{Pap}.
\end{proof}


Let us now consider the case of an inseparable quadratic extension $K/k$.
As already mentioned, there is actually a unique
such extension, namely, $K=\BF_q(\sqrt{T})$. In the sequel we denote by $F$ this extension.
Then $F/k$ is totally imaginary, $\infty=1/T$ is ramified in $F/k$, and we have:
$$\mathcal{O}_F=\mathbb F_q[\sqrt{T}]= A[\sqrt{T}].$$
For every $f\in A_+$ there is a unique order in $F$ of conductor $f$, namely, $\mathcal{O}= A[f\sqrt{T}]=A\oplus Af\sqrt{T}$.
Let $S_f(\sqrt{T})$ denote the following set of elements of $F$:
$$ S_f(\sqrt{T}) = \left\{ \frac{b+f\sqrt{T}}{a} \mid a\in A_+, b\in A; \exists c\in A, ac= b^2+f^2T,\,
\vert b\vert <\vert a \vert \leq \vert c\vert,\, \gcd(a,b,f)=1 \right\}. $$

We have:

\begin{lem}\label{Ordins2}${}$\par
Let $f\in A_+$, denote by
${\mathbb J}_{CM}(\sqrt{T},f)$ the set of singular moduli having complex multiplication by the order of $F$ of
conductor $f$, and let $z\in S_f(\sqrt{T})$,  Then:
\begin{enumerate}
    \item\label{Ordins2-1} $|z|=| z|_i=| z|_A\geq 1$.
    \item\label{Ordins2-2} We have:
$${\mathbb J}_{CM}(\sqrt{T},f)=\{ \sigma (j(z))\mid \sigma \in {\rm Gal}(F^{sep}/F)\}=j(S_f(\sqrt{T})).$$
\end{enumerate}
\end{lem}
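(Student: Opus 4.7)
\medskip

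\noindent\textbf{Proof plan.}
The plan is to mirror the strategy used for Lemma \ref{Ord2}, the main novelty being that the value group of the target $F_{\infty}=\mathbb{F}_q((1/\sqrt{T}))$ is $q^{\frac{1}{2}\mathbb{Z}}$, which lets us read many inequalities off the residue $\pmod{2}$ of $\deg$. Throughout, fix $z=(b+f\sqrt{T})/a \in S_f(\sqrt{T})$ with $ac=b^2+f^2T$ and $\gcd(a,b,f)=1$.

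For part (1), I would argue by direct ultrametric computation. Since $|b|\in q^{\mathbb{Z}}\cup\{0\}$ while $|f\sqrt{T}|=|f|\sqrt{q}\in q^{\mathbb{Z}+\frac{1}{2}}$, these two quantities are always distinct; the same holds for any $x\in k_{\infty}$, because $|b/a-x|\in q^{\mathbb{Z}}\cup\{0\}$ differs from $|f\sqrt{T}/a|$. Next, if one had $|b|\geq |f|\sqrt{q}$, then $|ac|=|b^2+f^2T|=|b|^2$, contradicting $|b|<|a|\leq|c|$; thus $|b|<|f|\sqrt{q}$, which gives $|b+f\sqrt{T}|=|f|\sqrt{q}$ and $|z|=|f|\sqrt{q}/|a|$. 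For $x\in k_{\infty}$ arbitrary, $|z-x|=\max(|b/a-x|,|f/a|\sqrt{q})\geq|z|$, so $|z|_i=|z|$; and since $A\subset k_{\infty}$ and $|z-0|=|z|$, one also obtains $|z|_A=|z|$. Finally $|a|^2\leq|a||c|=|f|^2q$ yields $|z|=|f|\sqrt{q}/|a|\geq 1$.

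For part (2), I would first show $\End(\Lambda_z)=\mathcal{O}:=A+Af\sqrt{T}$. The inclusion $\mathcal{O}\subseteq\End(\Lambda_z)$ reduces to showing that $f\sqrt{T}\cdot 1$ and $f\sqrt{T}\cdot z$ lie in $\Lambda_z=A+Az$. The first is clear from $f\sqrt{T}=az-b$. For the second, a direct computation gives
\[
f\sqrt{T}\cdot z - bz \;=\; \frac{f^2T-b^2}{a},
\]
and the characteristic-2 miracle $-b^2=b^2$ together with $ac=b^2+f^2T$ turns this into $c\in A$, so $f\sqrt{T}\cdot z=bz+c\in\Lambda_z$. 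The reverse inclusion $\End(\Lambda_z)\subseteq\mathcal{O}$ uses the primitivity condition $\gcd(a,b,f)=1$ exactly as in the separable case, following the arguments in \cite{Gek83}, \cite{Sch97}, and \cite[Lemma 5.1]{HsYu98}.

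Next I would show that the map $z\mapsto [\Lambda_z]$ induces a bijection between $S_f(\sqrt{T})$ and the set of isomorphism classes of rank-2 $A$-lattices with endomorphism ring $\mathcal{O}$, i.e. $\mathrm{Cl}(\mathcal{O})$. This is the reduction theory for the integral form $aX^2+bXY+cY^2$ of discriminant $f^2T$ (norm form of $\mathcal{O}$); the conditions $|b|<|a|\leq|c|$ and $a\in A_+$ single out a unique representative in each class, identically to the classical case treated in \cite{Gek83} and \cite{Pap}. Combined with the main theorem of CM for Drinfeld modules (in the formulation of Schweizer \cite[Thm.~3]{Sch97} or \cite[Thm.~7.5.21]{Pap}), which gives that $\Gal(F^{sep}/F)$ acts transitively on $\mathbb{J}_{CM}(\sqrt{T},f)$ with stabilizer corresponding to the principal class, this yields both equalities of part (2). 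The main obstacle is verifying that the main theorem of CM, usually stated for separable quadratic extensions, applies verbatim here; one should check that since $\mathcal{O}\otimes_A k = F$ is purely inseparable over $k$ but $F(j(z))/F$ remains separable (a genuinely Drinfeld-specific fact proved in \cite{Gek83, Sch97}), the Galois orbit parametrization still goes through.
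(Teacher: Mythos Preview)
Your proposal is correct and takes essentially the same route as the paper, whose proof consists only of citations to \cite{Sch97}, Theorem~3, \cite{HsYu98}, Lemma~5.1, and ``direct computations''; you have supplied those direct computations explicitly for part~(1) (the half-integral valuation trick is exactly the content of \cite[Lemma~5.1]{HsYu98}) and correctly identified the same ingredients for part~(2). The concern you flag at the end---that the main theorem of CM must be verified in the purely inseparable case $F=k(\sqrt{T})$---is precisely what \cite[Theorem~3]{Sch97} establishes, so once you invoke it the Galois-orbit description goes through as you outline.
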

\begin{proof} This is a consequence of \cite{Sch97}, Theorem 3, \cite{HsYu98}, Lemma 5.1,  and direct computations.
\end{proof}

\begin{rem}
Here we note that when $z\in S_f(F)$, then $j(z)$ is inseparable and $[k(j(z)):k]=2 [F(j(z)):F]$, by \cite{Sch97}. We refer the reader to the appendix for a more careful study of the inseparability of $j(z)$.
\end{rem}

\subsection{Estimates on CM \texorpdfstring{$j$}{j}-invariants}

In this section we collect a few lemmas on the CM $j$-invariants. We use the same notations as in the previous sections.

\subsubsection{Case $q$ odd}

\begin{lem}\label{Vince} Let $D\in A$ be such that $K=k(\sqrt{D})$ is imaginary quadratic and let $\sqrt{D}\in \bar k$ be a fixed
square root of $D$. Let $z\in S_D$ such that $\vert z\vert =1$.
Then there is a unique element $e\in \mathbb F_{q^2}\setminus \mathbb F_q$ such that $\vert z-e\vert<1$.
Moreover, we have
\begin{enumerate}
    \item $e^2=\frac{\sgn(D)}{4}$;
    \item If $z\not=e$, then $\displaystyle | z-e |\geq \frac{1}{\sqrt{|D|}}.$
\end{enumerate}
\end{lem}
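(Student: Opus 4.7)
The plan is to use the $1/T$-adic expansion structure of elements in $K_{\infty}$. First, I would unpack the constraint $|z|=1$. Since $z = (-b+\sqrt D)/(2a)$ with $|b|<|a|\leq|c|$ and $b^2 - 4ac = D$, the ultrametric inequality forces $|D| = |ac|$; combined with $|z|=1$ this gives $|\sqrt D| = |a|$, hence $|a|=|c|=q^d$ and $\deg D = 2d$ is even. By condition (\ref{cond-imaginary})(ii), $\sgn D$ is not a square in $\BF_q^{\times}$.

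I would then define $e$ as the square root of $\sgn(D)/4$ whose $T^d$-coefficient matches that of $\sqrt D$. Since $\sgn D\notin (\BF_q^{\times})^2$, we have $e\in \BF_{q^2}\setminus \BF_q$ and $e^2 = \sgn(D)/4$, which is part (1). By Hensel's lemma applied to the monic polynomial $D/\sgn(D)$ of even degree, $w := \sqrt{D/\sgn(D)}\in k_\infty$ exists as a monic $1/T$-series of degree $d$, and $\sqrt D = 2ew$. This rewrites
\begin{equation*}
z - e = -\frac{b}{2a} + e\,\frac{w-a}{a}.
\end{equation*}
The extension $K_\infty/k_\infty$ is unramified quadratic generated by $e$; since $\sgn(D)/4 \notin (\BF_q^{\times})^2$, a short norm computation shows that $|u+ev| = \max(|u|,|v|)$ for all $u,v\in k_\infty$. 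Therefore
\begin{equation*}
|z-e| = \frac{\max(|b|,|w-a|)}{q^d}.
\end{equation*}
Since $|b|<q^d$ and $|w-a|<q^d$ (the leading $T^d$-terms cancel), this is $<1$. Uniqueness of $e$ in $\BF_{q^2}\setminus \BF_q$ is then immediate, because any two distinct elements of $\BF_{q^2}^{\times}$ are at absolute distance~$1$.

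For part (2), assuming $z\neq e$, I split on $b$. If $b\neq 0$, then $|b|\geq 1$ immediately gives $|z-e|\geq 1/q^d = 1/\sqrt{|D|}$. The delicate case is $b=0$: here the relation $-4ac=D$ with $c\in A$ forces $a\mid D$, hence $a\mid (D-\sgn(D)\,a^2)$. If this element were zero, then $w=a$ (both monic of degree $d$), so $z = ew/a = e$, contradicting the assumption. Thus $|D-\sgn(D)\,a^2|\geq |a| = q^d$. Factoring $(w-a)(w+a) = D/\sgn(D) - a^2$ and noting $|w+a|=q^d$ (the leading terms $T^d$ add without cancellation), I obtain $|w-a|\geq 1$, hence $|z-e|\geq 1/q^d = 1/\sqrt{|D|}$.

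The main obstacle is the subcase $b=0$: merely knowing $D-\sgn(D)\,a^2\neq 0$ only yields $|D-\sgn(D)\,a^2|\geq 1$, which leads to the too-weak estimate $|z-e|\geq 1/|D|$. The improvement by a factor of $\sqrt{|D|}$ uses crucially the divisibility $a\mid D$ extracted from the integrality of $c$, and it is here that the hypothesis $b=0$ is exploited via the explicit formula $c = -D/(4a)$.
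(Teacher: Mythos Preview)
Your proof is correct, but takes a different route from the paper for part~(2). Both proofs begin the same way: from $|z|=1$ one gets $|a|=\sqrt{|D|}$, $\deg D$ even, and $e^2=\sgn(D)/4$ with $e\in\mathbb F_{q^2}\setminus\mathbb F_q$.

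For the lower bound on $|z-e|$, the paper avoids any case distinction by bringing in the conjugate $\bar z=(-b-\sqrt D)/(2a)$. One computes
\[
(z-e)(\bar z-e)=\frac{c+e^2a+eb}{a},
\]
which lies in $\mathbb F_{q^2}(T)$ with numerator in $\mathbb F_{q^2}[T]$. If $z\neq e$ this numerator is nonzero, hence has absolute value $\ge 1$, and since $\sgn(\bar z)=-e$ gives $|\bar z-e|=1$, the bound $|z-e|\ge 1/|a|=1/\sqrt{|D|}$ drops out in one line.

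Your approach instead writes $z-e$ in the orthogonal basis $\{1,e\}$ over $k_\infty$, so that $|z-e|=\max(|b|,|w-a|)/|a|$, and then splits on whether $b=0$. The case $b\neq 0$ is immediate; the case $b=0$ requires the divisibility observation $a\mid D$ (from $D=-4ac$) to push $|D-\sgn(D)a^2|$ up to $|a|$ rather than merely $1$. This works, and your explanation of why this extra input is needed is exactly right. The trade-off: the paper's conjugate trick is shorter and uniform, while your decomposition makes the source of the bound more transparent coordinate-by-coordinate, at the cost of a case analysis and an arithmetic step that only fires when $b=0$.
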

\begin{proof} 
 Write  $z=\frac{-b+\sqrt{D}}{2a}$ with $a\in A_+$, $(b,c)\in A^2$, $b^2-4ac=D$, $
\vert b \vert < \vert a\vert \leq \vert c\vert$, $\gcd(a,b,c)=1$. From $|z|=1$, we derive $ \sqrt{\vert D\vert}=\vert a\vert$, hence $\deg D$ is even. By the remark following the statement of Conditions (\ref{cond-imaginary}), this implies that the infinite place is inert in $K/k$. Hence $K_\infty= \mathbb F_{q^2}((\frac{1}{T})).$ Let $e\in \mathbb F_{q^2}$ be such that $\sgn(z)=e$. We have $\vert z-e\vert <1$ and since $|z|=1$, we get:
$$e^2=\frac{\sgn(D)}{4}.$$ This proves the first point. We deal with the second point now.
 Let us set  $\bar z = \frac{-b-\sqrt{D}}{2a}. $ Then $\sgn (\bar z)= -e,$ and, therefore  we get $\vert \bar{z}-e\vert = \vert \bar z\vert =1$. Since $z\not =e,$ we have $(z-e)(\bar z-e)\not =0$. Now we have, since $a\neq0$:
$$(z-e)(\bar z-e)= \frac{(b+2ea)^2-D}{4a^2}= \frac{4ac+4e^2a^2+4eab}{4a^2}= \frac{c+e^2a+eb}{a}.$$
Now observe that $c+e^2a+eb \in \mathbb F_{q^2}[T]\setminus\{0\}.$ This leads to $$\vert z-e\vert\geq\frac{1}{\vert a\vert}=\frac{1}{\sqrt{\vert D\vert}},$$ which is the claim.
\end{proof}

The following theorem of Brown will be crucial in the sequel:
\begin{theorem}\label{Bro} Let $z\in S_D$. Let $n\geq 0$ be the smallest integer such that $n\geq \log _q |z|$.
\begin{enumerate}
    \item If $\deg D$ is odd, then $n\geq 1$ and 
$$\vert j(z)\vert = q^{\frac{q+1}{2} q^n}.$$
\item If $\deg D$ is even, then:
\begin{enumerate}
\item if $n\geq 1$ we have $| j(z)|= q^{q^{n+1}}$; \par 
\item if $n=0$, then there exists a unique $e\in \mathbb F_{q^2}\setminus \mathbb F_q$ such that $|z-e| <1$, and we have
$|j(z)|=q^q | z-e|^{q+1}$.
\end{enumerate}
\end{enumerate}
\end{theorem}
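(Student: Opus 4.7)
My strategy is to transport $z$ to the fundamental domain $\mathcal{F}$, where $g$, $\Delta$ and the uniformizer $t(z)$ all admit explicit size estimates in terms of $|z|$, and then to compute $|j(z)|=|g(z)|^{q+1}/|\Delta(z)|$. By Lemma~\ref{Ord}(\ref{Ord-1}), any $z\in S_D$ already lies in $\mathcal{F}$ with $|z|\ge 1$; writing $z=(-b+\sqrt{D})/(2a)$ with $|b|<|a|$, I get $|z|=\sqrt{|D|}/|a|$. Hence $\log_q|z|$ is a half-integer when $\deg D$ is odd and an integer when $\deg D$ is even, which distinguishes cases~(1) and~(2). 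Moreover $n=\lceil\log_q|z|\rceil\ge 1$ in cases~(1) and~(2a), while $n=0$ (equivalently $|z|=1$) corresponds exactly to case~(2b).

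\textbf{Step 1 (estimates on $\mathcal{F}$).} I would combine Gekeler's product formula for $\Delta$ with the identity $|t(az)|=|t(z)|^{|a|}$, valid for every $a\in A_+$ and $z\in\mathcal{F}$ (and provable from the dominant-term analysis of the Carlitz series $e_C(u)=\sum_{i\ge 0}u^{q^i}/D_i$, using $|D_i|=q^{iq^i}$), to obtain
\[
|\Delta(z)|=|\tilde{\pi}|^{q^2-1}|t(z)|^{q-1}\qquad\text{for every } z\in\mathcal{F}.
\]
From the Eisenstein series $g(z)=\tilde{\pi}^{q-1}\bigl(1-(T^q-T)\sum_{a\in A_+}t(az)^{q-1}\bigr)$ I deduce that as soon as $|z|>1$ the non-constant terms have absolute value strictly less than $1$, so $|g(z)|=|\tilde{\pi}|^{q-1}$ in that range. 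Combining, for $z\in\mathcal{F}$ with $|z|>1$,
\[
|j(z)|=|t(z)|^{-(q-1)}.
\]

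\textbf{Step 2 (cases (1) and (2a)).} Writing $|z|=q^\beta$ with $n-1<\beta\le n$ and $n\ge 1$, the same dominant-term analysis yields $|t(z)|=q^{nq^n}/(|\tilde{\pi}||z|)^{q^n}$, so, using $|\tilde{\pi}|^{q-1}=q^q$,
\[
|j(z)|=q^{q^{n+1}}\cdot q^{(\beta-n)(q-1)q^n}.
\]
If $\deg D$ is odd then $\beta-n=-\tfrac12$, and the exponent simplifies to $q^n(q+1)/2$, which is~(1). If $\deg D$ is even and $n\ge 1$ then $\beta=n$, so the second factor is $1$ and we get $q^{q^{n+1}}$, which is~(2a).

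\textbf{Step 3 (case (2b), and the main obstacle).} When $n=0$, $|z|=1$, and Lemma~\ref{Vince} provides a unique $e\in\mathcal{E}$ with $|z-e|<1$. Since $j$ has a zero of order $q+1$ at every elliptic point by \cite[(5.15)]{Ge} and $\Delta$ is nowhere vanishing, $g$ must vanish to order exactly~$1$ at~$e$, so for $|z-e|$ small
\[
|j(z)|=\frac{|g'(e)|^{q+1}\,|z-e|^{q+1}}{|\Delta(e)|}.
\]
Evaluating the formula of Step~1 at $e\in\mathbb{F}_{q^2}\setminus\mathbb{F}_q$ (where the Weierstrass product for $e_C$ gives immediately $|e_C(\tilde{\pi}e)|=|\tilde{\pi}|$, hence $|t(e)|=|\tilde{\pi}|^{-1}$) yields $|\Delta(e)|=|\tilde{\pi}|^{q^2-q}=q^{q^2}$. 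The delicate step—and the main obstacle of the proof—is to establish $|g'(e)|=q^q$; I would obtain this by differentiating the Eisenstein series term-by-term at $e$ and exploiting the algebraicity of $e$ over $\mathbb{F}_q$ (in particular the relation $e^2=\sgn(D)/4$ provided by Lemma~\ref{Vince}). Once $|g'(e)|=q^q$ is known, the identity $(q^q)^{q+1}/q^{q^2}=q^q$ yields the announced formula $|j(z)|=q^q|z-e|^{q+1}$, completing the proof.
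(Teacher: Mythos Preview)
The paper does not give its own proof; it simply records that the statement is a direct consequence of Brown's Theorem~2.8.2. Your proposal instead outlines a direct computation via the $t$-expansions of $g$ and $\Delta$ on $\mathcal{F}$, which is essentially the route Brown himself takes (and the appendix Lemma~\ref{Brown} you are implicitly using for $|t(az)|$ is quoted from his paper). Your treatment of cases~(1) and~(2a) is correct: the identity $|t(az)|=|t(z)|^{|a|}$ collapses Gekeler's product formula for $\Delta$ to $|\Delta(z)|=|\tilde\pi|^{q^2-1}|t(z)|^{q-1}$ on all of $\mathcal{F}$, and the Eisenstein bound $|g(z)|=|\tilde\pi|^{q-1}$ for $|z|>1$ then gives $|j(z)|=|t(z)|^{-(q-1)}$, from which both formulas follow after your computation of $|t(z)|$.

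Case~(2b), however, has a gap. You obtain $|j(z)|=|g'(e)|^{q+1}|z-e|^{q+1}/|\Delta(e)|$ only ``for $|z-e|$ small'', whereas the theorem asserts this on the whole open disk $\{|z-e|<1\}$. To close the gap you must show $|g(z)|=|g'(e)|\,|z-e|$ throughout the disk, not just near $e$. This follows from non-archimedean Weierstrass preparation once you note that (i) the disk $\{|z-e|<1\}$ lies in $\mathcal{F}$ (since every $x\in k_\infty$ satisfies $|e-x|\ge 1$), (ii) the only zero of $g$ there is the simple one at $e$ (distinct elliptic points are at mutual distance $1$), and (iii) $|\Delta(z)|=q^{q^2}$ is constant on the disk by your Step~1 formula with $n=0$, $\varepsilon=0$. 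The ``delicate step'' $|g'(e)|=q^q$ you flag is indeed routine once written out: termwise differentiation gives
\[
g'(z)=-\tilde\pi^{\,q}(T^q-T)\sum_{a\in A_+}a\,t(az)^q,
\]
and at $z=e$ the term $a=1$ dominates, yielding $|g'(e)|=|\tilde\pi|^{q}\cdot q^q\cdot|t(e)|^q=q^q$ via $|t(e)|=|\tilde\pi|^{-1}$.
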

\begin{proof} This is a direct consequence of Theorem 2.8.2 of \cite{Brown}.
\end{proof}

\subsubsection{Case $q$ even}

First we consider the separable case. We will need the following key lemma:
\begin{lem}\label{Vince2} Let $\xi\in\bar k$ be as in Section \ref{CMcar2}, let $f\in A_+$, and
let $z\in S_f(\xi)$ be such that there exists $e\in \mathbb F_{q^2}\setminus \mathbb F_q$ with $\vert z-e\vert< 1$. Then:
\begin{enumerate}
    \item\label{Vince2-1} $e=\sgn(\xi)$ and $e^2+e= \sgn(B)$;
    \item\label{Vince2-3} if $z\not =e$, then $| z-e| \geq \frac{1}{| fG|}.$
\end{enumerate}
\end{lem}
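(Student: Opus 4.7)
The approach is to adapt the strategy of Lemma~\ref{Vince} to the characteristic $2$ setting; the argument splits naturally into two parts.

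For claim (1), I would first use the hypothesis $|z-e|<1$ combined with $|z|\ge 1$ (Lemma~\ref{Ord2}(\ref{Ord2-1})) and $|e|=1$ to conclude $|z|=1$ and $\sgn(z)=e$ by the ultrametric inequality. Writing $z=(b+fG\xi)/a$ with $|b|<|a|$, the equality $|az|=|a|$ forces $|fG\xi|=|a|$, so $|\xi|=|a|/|fG|$ must be an integer power of $q$. This rules out case (iii.a) of Section~\ref{CMcar2}, where $|\xi|=q^{(\deg B-\deg C)/2}$ has a strict half-integer exponent. Hence we are in case (iii.b), which gives $|\xi|=1$ and $|a|=|fG|$. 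Computing the leading coefficient of $az=b+fG\xi$ (dominated by $fG\xi$ since $|b|<|fG\xi|$) and using that $a$, $f$, $G$ are monic, one obtains $\sgn(z)=\sgn(\xi)$, hence $e=\sgn(\xi)$. Applying $\sgn$ to $\xi^2+\xi=B/C$, and ruling out a cancellation $\sgn(\xi)^2+\sgn(\xi)=0$ (which would force $\sgn(\xi)=1\in\mathbb F_q$, contradicting $|\xi^2+\xi|=|B/C|=1$ after substituting $\xi=1+\xi'$ with $|\xi'|<1$), I deduce $e^2+e=\sgn(\xi)^2+\sgn(\xi)=\sgn(B/C)=\sgn(B)$.

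For claim (2), let $\bar z$ denote the Galois conjugate of $z$ under the non-trivial element of $\Gal(K/k)$, which sends $\xi$ to $\xi+1$; then $\bar z=z+fG/a$. A sign computation shows $\sgn(\bar z)=e+1=e^q$ (the other root of $X^2+X+\sgn(B)\in\mathbb F_q[X]$), so $|\bar z-e|=1$. The defining relation $ac=b^2+bfG+f^2\,\mathrm{rad}(G)\,B$ of $S_f(\xi)$, combined with $G^2=C\cdot\mathrm{rad}(G)$ and $\xi^2+\xi=B/C$, yields $z+\bar z=fG/a$ and $z\bar z=c/a$. I then expand
$$
(z-e)(\bar z-e)=z\bar z+e(z+\bar z)+e^2=\frac{c+efG+e^2a}{a}.
$$
Since $z\neq e$ by hypothesis and $\bar z\neq e$ (from $\sgn(\bar z)\neq \sgn(e)$), the left-hand side is non-zero, so $c+efG+e^2a$ is a non-zero element of $\mathbb F_{q^2}[T]$ and therefore has absolute value at least $1$. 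This gives
$$
|z-e|=\frac{|(z-e)(\bar z-e)|}{|\bar z-e|}=\frac{|c+efG+e^2a|}{|a|}\ge\frac{1}{|a|}=\frac{1}{|fG|},
$$
as required.

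The main technical point is concentrated in (1): ruling out case (iii.a) and tracking signs carefully in $\mathbb F_{q^2}((1/T))$. Once (1) is established---and with it the equality $|a|=|fG|$---the norm-type computation in (2) is a clean characteristic $2$ analogue of the proof of Lemma~\ref{Vince}. It is worth noting that the leading coefficient of $c+efG+e^2a$ vanishes in $\mathbb F_{q^2}$ (as $\sgn(c)=\sgn(B)$ and $\sgn(B)+e+e^2=0$ by part (1)), which is consistent with $|z-e|<1$; the argument exploits only that the polynomial itself is non-zero, hence of absolute value $\ge 1$.
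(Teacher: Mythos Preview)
Your proof is correct and follows essentially the same approach as the paper. The only notable difference is in part~(1): you rule out the ramified case by observing that $|\xi|=|a|/|fG|$ must be an integral power of $q$, whereas the paper argues that the reduction of $z$ at $\infty$ being $e\in\mathbb F_{q^2}\setminus\mathbb F_q$ forces the residue field of $K_\infty$ to be $\mathbb F_{q^2}$, hence $\infty$ is inert. Both are valid; your argument is slightly more computational, the paper's slightly more conceptual. In part~(2) the two numerators $c+efG+e^2a$ and $c+\sgn(B)a+e(fG+a)$ coincide once one substitutes $\sgn(B)=e^2+e$ and uses characteristic $2$, so the computations are literally the same. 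Your closing remark about the vanishing leading coefficient is a nice sanity check that the paper omits.
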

\begin{proof} We use the same notations as in Section~\ref{CMcar2}. Set $K=k(\xi)$ and write $z= \frac{b+fG\xi}{a}$,
where
$$a\in A_+,\ (b,c)\in A^2,\ ac= b^2+bfG+f^2\mathrm{rad}(G)B,\ \vert b\vert<\vert a\vert \leq \vert c\vert,\ \gcd(a,b,f)=1.$$
Since $\vert z-e\vert< 1$, we have $\vert z \vert=1$ and the image of $z$ in the residue field at $\infty$ is $e$. It follows that
$K_{\infty}=\BF_{q^2}((1/T))$ and thus $\infty$ is inert in $K/k$. We then deduce from (\ref{xi}) in Section~\ref{CMcar2}
that $\vert\xi \vert=1$. On the other hand we
 have $\vert z\vert=1=\vert fG\xi/a\vert$, hence $\vert a\vert = \vert fG\vert$.
We get $\sgn(z)=\sgn(\xi)$ and hence (\ref{Vince2-1}). \par
\noindent Let us now prove (\ref{Vince2-3}). Set $\bar{z}= \frac{b+fG(\xi+1)}{a}.$ We have:
$$| \bar z -e| =1.$$
 Observe that:
 $$z\bar z= \frac{c}{a}, z+\bar z= \frac{fG}{a}.$$
 Thus:
 $$(z-e)(\bar z-e)= \frac{c+\sgn(B)a+e(fG+a)}{a}.$$
 Therefore,  if $z\not =e,$ $c+\sgn(B)a+e(fG+a)\not =0,$ and we get:
 $$\vert z-e\vert =\vert (z-e)(\bar z-e)\vert\geq \frac{1}{\vert a\vert}=\frac{1}{\vert fG\vert}.$$
 The lemma follows.
\end{proof}
The following result is the analogue in characteristic 2 of Theorem \ref{Bro}:
\begin{theorem}\label{Bro2} Let $z\in S_f(\xi).$ Let $n\geq 0$ be the smallest integer such that $n\geq \log_q\vert z\vert.$
\begin{enumerate}
\item\label{Bro2-1} {\sl Case $\infty$ is ramified in $K/k$ ($\deg B>\deg C$).}  Then $n\geq 1$ and 
$$| j(z)|  = q^{\frac{q+1}{2} q^n}.$$
\item\label{Bro2-2} {\sl Case $\infty$ is inert in $K/k$ ($\deg B=\deg C$).} Then:\par
i) if $n\geq 1$ we have $| j(z)|= q^{q^{n+1}},$\par
ii) if $n=0,$ then there exists a unique $e\in \mathbb F_{q^2}\setminus \mathbb F_q$ such that $|z-e| <1$, and we have
$|j(z)|=q^q | z-e|^{q+1}$.
\end{enumerate}
\end{theorem}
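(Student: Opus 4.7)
The plan is to mirror Brown's proof of Theorem~\ref{Bro} (\cite[Theorem~2.8.2]{Brown}), substituting the characteristic-two CM data of Section~\ref{CMcar2} and Lemma~\ref{Vince2} for their odd-characteristic counterparts. The expansions of the Drinfeld modular forms $g$ and $\Delta$ at the cusp $\infty$, and the resulting valuation estimates for $|g(z)|$ and $|\Delta(z)|$ on the fundamental domain $\mathcal{F}$, are insensitive to the characteristic; since $z\in\mathcal{F}$ by Lemma~\ref{Ord2}(\ref{Ord2-1}), they apply here. The theorem will then follow from $j(z)=g(z)^{q+1}/\Delta(z)$.

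First I extract $|\xi|$ from $\xi^2+\xi=B/C$: in the ramified case~(\ref{Bro2-1}), the ultrametric inequality gives $|\xi|=q^{(\deg B-\deg C)/2}>1$, a half-integer power of $q$; in the inert case~(\ref{Bro2-2}), condition~(iii.b) of Section~\ref{CMcar2} makes the residue of $\xi$ at $\infty$ a generator of $\mathbb{F}_{q^2}/\mathbb{F}_q$, so $|\xi|=1$ and $\sgn(\xi)\in\mathcal{E}$. Then for $z=(b+fG\xi)/a\in S_f(\xi)$ with $|b|<|a|\le|c|$ and $ac=b^2+bfG+f^2\mathrm{rad}(G)B$, I determine $|z|$: in case~(\ref{Bro2-1}), $|\xi|>1$ together with the dominance of $|f^2\mathrm{rad}(G)B|$ on the right of the discriminant relation forces $|a|<|fG\xi|$, hence $|z|>1$ and $n\ge 1$; in case~(\ref{Bro2-2}), both possibilities $|z|>1$ (sub-case~(i)) and $|z|=1$ (sub-case~(ii)) occur, according as $|a|<|fG|$ or $|a|=|fG|$.

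Finally, I evaluate $|j(z)|$ using the standard expansion $j\sim -t^{-(q-1)}$ near the cusp, where $t(z)$ is the usual parameter at $\infty$. For $z\in\mathcal{F}$ with $|z|=q^n\ge q$, a direct valuation computation on the Carlitz exponential shows that the relevant series has a well-defined dominant index, yielding $|j(z)|=q^{q^{n+1}}$ in case~(\ref{Bro2-2})(i). In the ramified case~(\ref{Bro2-1}), $|z|=q^{n-1/2}$ shifts the dominant index and yields $|j(z)|=q^{(q+1)q^n/2}$. For case~(\ref{Bro2-2})(ii), $|z|=1$ and $\sgn(z)=\sgn(\xi)\in\mathcal{E}$, so $e=\sgn(\xi)$ is the unique element of $\mathcal{E}$ with $|z-e|<1$; Lemma~\ref{Vince2} confirms this identification, and since $e$ is a zero of $j$ of order $q+1$ in $\mathcal{F}$ by \cite[(5.15)]{Ge}, the local expansion of $j$ at $e$ combined with the leading coefficients of $g$ and $\Delta$ near $e$ yields $|j(z)|=q^q|z-e|^{q+1}$.

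The main obstacle is the careful valuation bookkeeping in the last step, particularly identifying the dominant index in the Carlitz exponential series in each range of $|z|$ and ruling out cancellations when $|z|$ is a half-integer power of $q$. These are purely characteristic-independent computations, so they transport from the odd-characteristic proof of \cite{Brown} with only notational changes; the genuine novelty is the combinatorial setup of $S_f(\xi)$ and the identification of the elliptic point via Lemma~\ref{Vince2}.
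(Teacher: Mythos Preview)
The paper does not give an independent proof of this theorem; it simply cites \cite[Theorem~5.2]{HsYu98}. Your outline is essentially a sketch of Hsia and Yu's argument there: they transport Brown's valuation analysis on $\mathcal{F}$ to characteristic two, replacing $\sqrt{D}$ by the Artin--Schreier generator $\xi$ and $S_D$ by $S_f(\xi)$, exactly as you describe. So your approach coincides with the one in the cited reference.
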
 
\begin{proof} See \cite{HsYu98}, Theorem 5.2.
\end{proof}

Now we move to the inseparable case.
\begin{prop}\label{Broins2} Let $z\in S_f(\sqrt{T}).$ Let $n\geq 0$ be the smallest integer such that $n\geq \log_q\vert z\vert$.
Then $n\ge 1$ and  
$$|j(z)| =q^{\frac{q+1}{2}q^n} .$$
\end{prop}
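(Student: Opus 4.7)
I plan to follow the strategy of Theorem~\ref{Bro}(1) and Theorem~\ref{Bro2}(\ref{Bro2-1}), both of which handle the case where $\infty$ is ramified in the quadratic CM field. The present situation is entirely analogous, since $F = \mathbb{F}_q(\sqrt{T})$ is totally ramified at $\infty$.

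First I would pin down $|z|$ exactly. Writing $z = (b + f\sqrt{T})/a$ under the constraints defining $S_f(\sqrt{T})$, I note that $b^2$ has even $T$-degree while $f^2 T$ has odd $T$-degree, so the two terms cannot cancel in $A$. Combined with $|b| < |a| \le |c|$ and $ac = b^2 + f^2 T$, this forces $|ac| = |f|^2 q$, and the same parity argument gives $|b + f\sqrt{T}| = |f|\sqrt{q}$. Hence $|z| = |f|\sqrt{q}/|a| = q^{\deg f + 1/2 - \deg a}$ is a half-integer power of $q$. By Lemma~\ref{Ordins2}(\ref{Ordins2-1}) we have $|z| \ge 1$, so $\log_q |z| \ge 1/2$; consequently the smallest integer $n \ge \log_q |z|$ is $n = \deg f + 1 - \deg a \ge 1$, which already proves the first claim of the proposition.

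Next I would compute the leading behavior of the standard $\infty$-adic parameter $t(z) = 1/e_C(\tilde{\pi} z)$ via a Newton polygon analysis of the Carlitz exponential series, whose $i$-th term has absolute value $q^{q^i\,(q/(q-1)\,+\,n - 1/2\,-\,i)}$ (using $|\tilde{\pi}| = q^{q/(q-1)}$ and $|D_i| = q^{iq^i}$). A direct comparison relying on $(q-1)^2 > 0$ shows that this exponent attains a unique strict maximum at $i = n$, giving
\[
|t(z)| = q^{-q^n(q+1)/(2(q-1))}.
\]

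Finally I would combine standard asymptotic expansions of the Drinfeld modular forms $g$ and $\Delta$ on the fundamental domain (as in \cite{Ge}): for $z \in \Omega$ with $|z| = |z|_i \ge q^{1/2}$, one has $|g(z)| = |\tilde{\pi}|^{q-1} = q^q$ and $|\Delta(z)| = |\tilde{\pi}|^{q^2-1}\,|t(z)|^{q-1} = q^{q(q+1)}\,|t(z)|^{q-1}$. Plugging these in yields
\[
|j(z)| \;=\; \frac{|g(z)|^{q+1}}{|\Delta(z)|} \;=\; |t(z)|^{-(q-1)} \;=\; q^{(q+1)q^n/2},
\]
as required. The main technical obstacle is justifying that the infinite-product (resp.\ infinite-sum) contributions in the expansions of $\Delta(z)$ and $g(z)$ contribute $1$ in absolute value already in the range $|z| \ge q^{1/2}$; this reduces to a Newton polygon estimate of $|t(az)|$ for every $a \in A_+$, along the same lines as in \cite{HsYu98} for the separable case in even characteristic, and carries over verbatim here because the parametrization of $z$ produces the same pattern of absolute values.
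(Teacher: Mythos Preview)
Your argument is correct. The paper itself does not give a proof but simply cites \cite{HsYu98}, Proposition~5.3; you have essentially reconstructed that argument directly, and the details you give (the parity argument forcing $\log_q\vert z\vert\in\mathbb Z+\tfrac12$, hence $n\ge 1$, and the Newton polygon computation yielding $\vert t(z)\vert=q^{-(q+1)q^n/(2(q-1))}$, which is exactly Lemma~\ref{Brown} with $\varepsilon=\tfrac12$) are sound. The only difference from the paper is that you supply a self-contained sketch where the paper defers to the literature; since the ramified inseparable case has the same $\varepsilon=\tfrac12$ as the ramified separable cases treated in Theorems~\ref{Bro}(1) and~\ref{Bro2}(\ref{Bro2-1}), your remark that the estimates on the $g$- and $\Delta$-expansions carry over verbatim is justified.
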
 
\begin{proof} See \cite{HsYu98}, Proposition 5.3.
\end{proof}

\subsection{Places and heights}

To each (non trivial) place $v$ of $k$, we associate an absolute value $|\cdot|_v$ normalized as follows. 
If the place $v$ corresponds to
a monic irreducible polynomial $P\in A$, 
we define $|x|_v = q^{-\deg(P) v_P(x)}$ for any $x\in k$ (where
$v_P$ is the usual $P$-adic valuation
on $k$, i.e., $v_P(P)=1$).
There is one more place of $k$, denoted $\infty$, and for this place we set $|x|_\infty = q^{\deg(x)}$ for any $x\in k$.  

For a finite extension $K/k$ we denote by $M_K$ the set of places of $K$. A place $v\in M_K$ is called infinite if it is an extension of $\infty$, otherwise it is called finite. The set of finite and infinite places of $K$ are denoted by $M_K^f$ and $M_K^\infty$, respectively.

To each place $v\in M_K$ we associate its absolute value normalized so that, for every $x\in k$ we have $|x|_v = |x|_w$, where $w\in M_k$ lies beneath $v$.

To each place $v\in M_K$ we also associate the ramification index $e_v$ (so $|K|_v \subset q^{\frac{1}{e_v}\BZ}$), the residual degree $f_v$ and the local degree $n_v = [K_v:k_v] = e_v f_v$.

We have the following two important properties:
\begin{itemize}
	\item {Product Formula:} For every $x\in K-\{0\}$, $\displaystyle \prod_{v\in M_K} |x|_v^{n_v} = 1$.
	\item {Extension Formula:} For every $w\in M_k$ we have $\displaystyle{[K:k] = \sum_{v|w}n_v}$.
\end{itemize}


\bigskip

We will associate to a Drinfeld module the J-height, which is the object of study here, and the Taguchi height, which will help in proving our result on units. 


\subsubsection{Na\"ive heights}

Let $\Phi$ be a Drinfeld $A$-module of rank $2$ over $K$. It is characterised by 
\[
\Phi_T = T + g \tau + \Delta \tau^2 ,\qquad g, \Delta \in K, \; \Delta\neq 0.
\]
We refer to $g$ and $\Delta$ as the {\em coefficients} of $\Phi$. Let $j=\frac{g^{q+1}}{\Delta}$ be the $j$-invariant of $\Phi$, as defined in paragraph \ref{Drinfeld intro}.

For any $\alpha \in K$, we define the logarithmic Weil height of $\alpha$:
\begin{equation}
h(\alpha) = \frac{1}{[K:k]}\sum_{v\in M_K} n_v \log_q\max\{ 1, |\alpha|_v\}.
\end{equation}
We now define the logarithmic Weil height of $\Phi$ as:
\begin{equation}\label{j-height}
h(\Phi)= h(j).
\end{equation}
This height, and its local components $h^v(\Phi)$ for $v\in M_K$, are invariant under isomorphisms of~$\Phi$. They do not depend on the choice of the field $K$ containing  $j$, thanks to the extension formula.

\subsubsection{Taguchi height}

In \cite{Tag} Taguchi defines the {\em differential height} of $\Phi$ as the degree of the metrised conormal line bundle along the unit section associated to a minimal model of~$\Phi$, which is the analogue of the Faltings height for abelian varieties.
All we need here is the identity (5.9.1) of \cite{Tag}, valid for Drinfeld modules of rank $r$ with everywhere stable reduction, which we adopt as our definition:
\begin{eqnarray}\label{Tagu}
\hTag(\Phi) &= & \frac{1}{[K:k]}\left[ \sum_{v\in M_K^f} h_G^v(\Phi) - \sum_{v\in M_K^\infty} n_v \log_q D(\Lambda_v)^{1/r}\right]
\end{eqnarray}
where for a finite place $v\in M_K^f$, the local component $h_G^v(\Phi) = n_v\log_q\max_i |g_i|_v^{1/(q^i-1)}$ equals Taguchi's $v(\Phi)$,
see \cite[\S2]{Tag}, $\Lambda_v$ is the lattice corresponding to the minimal model of $\Phi$ at the place $v$, see \cite[\S5]{Tag}, and $D(\Lambda_v)$ is the covolume of the lattice defined in \cite{Tag} on page 305. This height (\ref{Tagu}) is referred to as the \emph{stable Taguchi height} or the \emph{stable differential height}. It does not depend on the field of definition of the Drinfeld module, as long as $\Phi$ has everywhere stable reduction over that field. This is the height used by Wei in \cite{Wei20}. 

\subsection{Class numbers}

In the proof of Corollary~\ref{maj-hjb} we will need an explicit upper bound for the class number of an order
of an imaginary quadratic field $K$ in terms of its discriminant.
Since we will only need the case where the place $\infty$ is inert in $K/k$ we do not consider the case where $\infty$ is ramified,
but an analogous upper bound could be obtained using similar arguments in this case.
The main result of this section is the following estimate, which is valid in every characteristic.

\begin{prop}\label{class}
Let $\mathcal O$ be an order of an imaginary quadratic extension $K$ of $k$. Assume that the place $\infty$ is inert in $K/k$ and that
$\vert D_{\mathcal O}\vert > 1 $. 
Then we have:
$$h(\mathcal O) \leq \frac{37}{2(q+1)} \sqrt{\vert D_{\mathcal O}\vert}(\log_q\vert D_{\mathcal O}\vert)^2.$$
\end{prop}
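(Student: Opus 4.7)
The strategy is to reduce to the case of the maximal order $\mathcal O_K$ via the classical conductor formula, and then to combine the analytic class number formula for $\mathcal O_K$ with an explicit upper bound on $L(1,\chi_K)$, where $\chi_K$ is the quadratic character associated to $K/k$.

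First, let $f\in A_+$ be the conductor of $\mathcal O$, so $D_{\mathcal O}=f^2 D_K$. The standard exact sequence relating $\mathrm{Pic}(\mathcal O)$ to $\mathrm{Pic}(\mathcal O_K)$, available in arbitrary characteristic, yields
$$
h(\mathcal O) = \frac{|f|\,h(\mathcal O_K)}{[\mathcal O_K^\times:\mathcal O^\times]}\prod_{P\mid f}\left(1-\frac{\chi_K(P)}{|P|}\right),
$$
where the product runs over monic irreducible $P\in A$. Since the unit index is at least $1$ and since $|1-\chi_K(P)/|P||\leq 1+1/|P|$, this gives
$$
h(\mathcal O)\leq |f|\,h(\mathcal O_K)\prod_{P\mid f}\left(1+\tfrac{1}{|P|}\right).
$$

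Next I would bound each factor. For the conductor product, the bound $\prod_{P\mid f}(1+|P|^{-1})\leq \prod_{\deg P\leq \deg f}(1-|P|^{-1})^{-1}$ combined with the function-field Mertens estimate (easily made explicit in $A=\mathbb F_q[T]$) gives a clean upper bound of the shape $C_1\log_q|f|$. For $h(\mathcal O_K)$, one uses the hypothesis that $\infty$ is inert in $K/k$: then $K$ corresponds to a smooth projective curve $X_K$ of genus $g_K$ with one $\mathbb F_q$-rational place at infinity whose residue field has order $q^2$, the group of units $\mathcal O_K^\times$ has order $q^2-1$, and the analytic class number formula takes the form
$$
h(\mathcal O_K)\;=\;\frac{|D_K|^{1/2}}{q+1}\,L(1,\chi_K),
$$
up to normalisation constants coming from the Euler factor at $\infty$ (this is where the $\tfrac{1}{q+1}$ in the final bound enters). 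The $L$-function $L(s,\chi_K)$ is a polynomial in $q^{-s}$ whose roots lie on the line $|q^{-s}|=q^{-1/2}$ by Weil; a Polya--Vinogradov-type partial-summation argument applied to the Dirichlet series of $L(s,\chi_K)$, or equivalently an explicit formula using the RH-bound on the zeros, produces an estimate of the form $L(1,\chi_K)\leq C_2\log_q|D_K|$.

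Finally, combining the three estimates and using $|f|\sqrt{|D_K|}=\sqrt{|D_{\mathcal O}|}$ together with $\log_q|f|,\log_q|D_K|\leq \log_q|D_{\mathcal O}|$, one arrives at
$$
h(\mathcal O)\leq \frac{C_1 C_2}{q+1}\sqrt{|D_{\mathcal O}|}\bigl(\log_q|D_{\mathcal O}|\bigr)^2,
$$
with an explicit pair $(C_1,C_2)$. The main obstacle is precisely the quantitative bookkeeping: to obtain the clean constant $37/2$ one must track constants uniformly in $q$ through the Mertens estimate and through the $L$-value bound. In particular the $L$-function bound is delicate, since the trivial Weil-type estimate $|L(1,\chi_K)|\leq (1+q^{-1/2})^{\deg D_K}$ is only a polynomial-in-$|D_K|$ bound and is not sufficient; one really has to exploit either the functional equation or a direct smoothed estimate of $\sum_{\deg a\leq N}\chi_K(a)$ to pull out the logarithmic saving. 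The inseparable case $q$ even, $K=\mathbb F_q(\sqrt T)$ is treated analogously, since the analytic class number formula and its $L$-value ingredients carry over verbatim in that setting.
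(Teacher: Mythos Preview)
Your approach is essentially the same as the paper's: conductor formula, Mertens-type bound on $\prod_{P\mid f}(1+|P|^{-1})$, and an $L$-value bound for $h(\mathcal O_K)$, then combine. Two points are worth noting.

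First, you overcomplicate the $L(1,\chi_K)$ step. The paper does not need Weil, the functional equation, or any Polya--Vinogradov smoothing. It simply observes that the completed series $\Lambda(\chi,t)=(1+t)L_K(t)=\sum_{a\in A_+}\chi(a)t^{\deg a}$ is a \emph{polynomial} in $t$ of degree $2g_K+1$, so
\[
\Lambda(\chi,q^{-1})=\sum_{\substack{a\in A_+\\ \deg a\le 2g_K+1}}\frac{\chi(a)}{|a|}\le 2g_K+2=\deg D_K
\]
(by Riemann--Hurwitz), which immediately gives $h(\mathcal O_K)\le \frac{2}{q+1}|D_K|^{1/2}\deg D_K$. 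This is the clean constant $C_2=2$; the Weil bound you mention is neither needed nor useful here.

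Second, two small corrections. Your last sentence about the inseparable extension $K=\mathbb F_q(\sqrt T)$ is misplaced: in that case $\infty$ is ramified, so under the hypothesis ``$\infty$ inert'' the extension $K/k$ is automatically separable and this case never occurs. Also, the paper has to treat separately the boundary case $\deg D_K=0$, i.e.\ $K=\mathbb F_{q^2}(T)$, where $h(\mathcal O_K)=1$ and $[\mathcal O_K^\times:\mathcal O^\times]=q+1$; this is easy but your outline skips it. Finally, to reach exactly $37/2$ the paper uses the AM--GM inequality $\deg(f^2)\cdot\deg D_K\le\frac14(\deg D_{\mathcal O})^2$ rather than the cruder $\log_q|f|,\log_q|D_K|\le\log_q|D_{\mathcal O}|$ you propose.
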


In order to prove this proposition we need to introduce some more notation.
We will denote by $\mathcal P$ the set of monic irreducible polynomials in $A$.
Let $K$ be an imaginary quadratic extension of $k$ such that $\infty$ is inert in $K/k$. 
The extension $K/k$ is Galois (since $\infty$ is inert this is also the case when $q$ is even, see Section \ref{complexmultiplication}). Denote by $\chi : \Gal(K/k)\rightarrow \BC^{\times}$ the non-trivial character of $\Gal(K/k)$. When $P\in \mathcal P$, we
define $\chi(P)$ as follows. If $P$ is unramified in $K/k$ then $\chi(P) =\chi(\sigma_P)$, where
$\sigma_P\in {\rm Gal}(K/k)$ is the Frobenius at $P$.
If $P$ is ramified, we set $\chi(P)=0$.

We have:

\begin{lem}\label{Classnumber}
Let $\mathcal O$ be an order of an imaginary quadratic extension $K$ of $k$, and let $f\in A_+$ be its conductor. We have:
$$h(\mathcal{O})= h(\mathcal{O}_K)
\frac{\vert f \vert }{[\mathcal{O}_K^{\times}: \mathcal{O}^\times]} \prod_{\substack{P\in \mathcal P \\ P\mid f}} \left(1-\frac{\chi(P)}{| P|}\right).$$
\end{lem}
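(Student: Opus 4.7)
The plan is to adapt to the function field setting the classical argument of Cox, see \cite{Cox} Theorem 7.24 and its proof, by establishing an exact sequence
$$
1 \longrightarrow \mathcal{O}_K^\times/\mathcal{O}^\times \longrightarrow (\mathcal{O}_K/f\mathcal{O}_K)^\times/(A/fA)^\times \longrightarrow \Pic(\mathcal{O}) \longrightarrow \Pic(\mathcal{O}_K) \longrightarrow 1,
$$
and then to compare the orders of the groups involved. A first step consists in checking that $f\mathcal{O}_K$ is contained in $\mathcal{O}$, with $\mathcal{O}=A+f\mathcal{O}_K$ in all cases (odd characteristic, as well as separable and inseparable even characteristic, using the explicit $A$-bases recalled in Section~\ref{complexmultiplication}), and that the inclusion $A\hookrightarrow \mathcal{O}$ induces an isomorphism $A/fA \xrightarrow{\sim} \mathcal{O}/f\mathcal{O}_K$. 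This identifies $(A/fA)^\times$ with a subgroup of $(\mathcal{O}_K/f\mathcal{O}_K)^\times$ and makes the quotient in the middle of the sequence meaningful.

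The surjection on the right is $[\mathfrak{a}]\mapsto [\mathfrak{a}\mathcal{O}_K]$: it is surjective since every ideal class in $\mathcal{O}_K$ has a representative prime to the conductor $f$ (standard in any Dedekind domain with finite residue fields, using weak approximation at the finitely many primes dividing $f$), and proper $\mathcal{O}$-ideals prime to $f$ are in inclusion-preserving bijection with $\mathcal{O}_K$-ideals prime to $f$ via $\mathfrak{a}\mapsto \mathfrak{a}\mathcal{O}_K$. For the rest of the sequence, I would relate proper invertible $\mathcal{O}$-ideals to the group $K^\times$ modulo units, and use the snake lemma on the natural exact sequences
$$
1\longrightarrow \mathcal{O}^\times \longrightarrow K^\times \longrightarrow I(\mathcal{O},f)\longrightarrow \Pic(\mathcal{O})\longrightarrow 1
$$
(and its analogue for $\mathcal{O}_K$), identifying the kernel of $\Pic(\mathcal{O})\to \Pic(\mathcal{O}_K)$ with $(\mathcal{O}_K/f\mathcal{O}_K)^\times$ modulo the images of $(A/fA)^\times$ and of $\mathcal{O}_K^\times$, then splitting off the unit contribution to reach the form displayed above.

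Once the exact sequence is in place, taking cardinalities gives
$$
h(\mathcal{O})=\frac{h(\mathcal{O}_K)}{[\mathcal{O}_K^\times:\mathcal{O}^\times]} \cdot \frac{|(\mathcal{O}_K/f\mathcal{O}_K)^\times|}{|(A/fA)^\times|}.
$$
By the Chinese Remainder Theorem the ratio on the right factors as $\prod_{P\mid f} |(\mathcal{O}_K/P^{e_P}\mathcal{O}_K)^\times|/|(A/P^{e_P}A)^\times|$, with $e_P=v_P(f)$. A short case analysis computes this local factor according to the splitting of $P$ in $K/k$: if $P$ splits one gets $|P|^{e_P}(1-1/|P|)$; if $P$ is inert, $|P|^{e_P}(1+1/|P|)$; if $P$ ramifies, $|P|^{e_P}$. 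In each case the factor equals $|P|^{e_P}\bigl(1-\chi(P)/|P|\bigr)$, which telescopes to $|f|\prod_{P\mid f}(1-\chi(P)/|P|)$ and yields the stated formula.

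The main obstacle is the first paragraph: writing down the exact sequence cleanly and checking its exactness in each of the three flavours of quadratic extension (odd $q$, separable even $q$, inseparable even $q$), since the structure of $\mathcal{O}_K$ and of the order $\mathcal{O}=A+f\mathcal{O}_K$ depends on the case. The subsequent local computation is then purely combinatorial and proceeds uniformly.
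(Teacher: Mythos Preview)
Your proposal is correct and follows essentially the same approach as the paper: the paper's own proof simply cites Cox's Theorem~7.24 (together with \cite[Proposition~17.9]{Ros} for $q$ odd and \cite[Proposition~2.3]{Che} for $q$ even), which is precisely the exact sequence argument you have sketched out in detail. One minor remark: your concern about the inseparable case is superfluous here, since the character $\chi$ is introduced in the paper under the standing hypothesis that $\infty$ is inert in $K/k$, which forces $K/k$ to be separable and Galois.
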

\begin{proof}
This formula can be proved as in the case of number fields, for instance following the proof of Theorem 7.24 page 146 of \cite{Cox}.
See also \cite[Proposition 17.9]{Ros} when $q$ is odd and \cite[Proposition 2.3]{Che} when $q$ is even.
\end{proof}

\begin{lem}\label{class2}
Let $K$ be an imaginary quadratic extension of $k$ such that $\infty$ is inert in $K/k$. Assume that $\deg D_K\geq 1$.
Then we have:
$$h(\mathcal{O}_K)\leq \frac{2}{q+1} \vert D_K\vert^{1/2}\deg D_K.$$
\end{lem}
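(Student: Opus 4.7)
The plan is to combine the analytic class number formula for function fields with a trivial upper bound on the associated Dirichlet $L$-function. Starting from the factorisation $\zeta_K(s) = \zeta_k(s) L(s, \chi)$ of the Dedekind zeta function, where $\chi$ is the quadratic character attached to $K/k$, I would compare residues at $s = 1$ and relate the ideal class group $\cl(\mathcal{O}_K) = \Pic(\Spec \mathcal{O}_K)$ to $\Pic^0(Y)$ of the smooth projective curve $Y$ with function field $K$, through the exact sequence quotienting by the unique infinite place $w$ (of residue degree $2$, since $\infty$ is inert). This yields an inequality of the shape
\[
h(\mathcal{O}_K) \leq \frac{2 |D_K|^{1/2}}{q+1} L(1, \chi),
\]
where $L(s, \chi) = \sum_{a \in A_+,\, \gcd(a, D_K) = 1} \chi(a) |a|^{-s}$ is the Dirichlet $L$-function. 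The denominator $q+1$ absorbs the local Euler factor $(1 + q^{-s})^{-1}$ at the inert place $\infty$, while the factor of $2$ in the numerator is the worst case of comparing $\cl(\mathcal{O}_K)$ with $\Pic^0(Y)$; it is attained precisely when $Y$ carries a divisor of degree $1$, and the bound is strictly sharper otherwise.

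Next, I would bound $L(1, \chi)$ by exploiting its polynomial structure. The character orthogonality relation $\sum_{r \in (A/D_K A)^\times} \chi(r) = 0$ implies that the short character sum
\[
T_n := \sum_{\substack{a \in A_+,\ \deg a = n \\ \gcd(a, D_K) = 1}} \chi(a)
\]
vanishes for every $n \geq \deg D_K$ (reduction modulo $D_K$ shows $T_n$ factors as $q^{n - \deg D_K}$ times that orthogonality sum). Hence $L(s, \chi)$ is a polynomial in $q^{-s}$ of degree at most $\deg D_K - 1$, and the trivial bound $|T_n| \leq q^n$ (the number of monic polynomials of degree $n$) gives
\[
|L(1, \chi)| \leq \sum_{n=0}^{\deg D_K - 1} |T_n|\, q^{-n} \leq \deg D_K.
\]
Combining this with the previous estimate yields $h(\mathcal{O}_K) \leq \frac{2 |D_K|^{1/2}}{q+1} \deg D_K$, as wanted.

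The main technical obstacle is the first step: pinning down the constant $\frac{2}{q+1}$ uniformly in every characteristic. In characteristic $2$, the possible wild ramification at primes dividing $D_K$ (see Section~\ref{CMcar2}) complicates the conductor of $\chi$ and the associated Euler factors, so one must verify that the polynomial bound on $L(s, \chi)$ and the $(q+1)$-factor coming from the inert infinite place still combine to give the same inequality. An alternative, more elementary route would count reduced binary forms in $S_D$ directly, but handling the boundary case $|a| = |c|$ (which identifies pairs of forms) requires essentially the same bookkeeping that the Picard group approach encodes cleanly.
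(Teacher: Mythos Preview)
Your approach is essentially the paper's: both combine the class number formula with the trivial coefficient bound $|T_n|\le q^n$ on the Dirichlet series $\Lambda(\chi,t)=\sum_{a\in A_+}\chi(a)\,t^{\deg a}$, and both extract the constant $\tfrac{2}{q+1}$ from the inert infinite place together with the relation $h(\mathcal O_K)=2h_K$.

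The paper does resolve your characteristic-$2$ concern, but by avoiding the conductor question altogether. Instead of invoking orthogonality of $\chi$ modulo $D_K$ to cut off the series at degree $\deg D_K-1$, it writes $\Lambda(\chi,t)=(1+t)L_K(t)$ with $L_K(t)$ the numerator of $\zeta_K$; since $\deg_t L_K=2g_K$ by the general theory of zeta functions of curves, one gets $\deg_t\Lambda=2g_K+1$, and Riemann--Hurwitz gives $2g_K+2=\deg D_K$. Everything then follows from the functional equation identity $h_K=q^{g_K}L_K(q^{-1})$, uniformly in the characteristic, with no need to check that $\chi$ is a Dirichlet character modulo $D_K$ in the wildly ramified case.

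One small correction: the factor $2$ is not a worst case but an exact equality $h(\mathcal O_K)=2h_K$. The hypothesis $\deg D_K\ge 1$ forces the constant field of $K$ to be $\mathbb F_q$, so the degree map $\Pic(Y)\to\mathbb Z$ is surjective; since the unique place above $\infty$ has degree $2$, one obtains a short exact sequence $0\to\Pic^0(Y)\to\cl(\mathcal O_K)\to\mathbb Z/2\mathbb Z\to 0$.
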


\begin{proof}
When $q$ is odd, this inequality follows for example from
Lemma 9 of \cite{PC}. It turns out that the same proof also works when $q$ is even.
We give here the proof for the reader's convenience. In what follows we do not assume that $q$ is odd.

Note that since $\deg D_K \geq 1$ we have $K\not=\BF_{q^2}(T)$ and the constant field of $K$ is $\BF_q$.
Let $\zeta_K(s)$ be the zeta function of $K$. It is well-known that it is a rational function in $q^{-s}$. More precisely, by \cite[Theorem 5.9]{Ros}, 
there exists a polynomial $L_K(t)\in \mathbb Z[t]$ of degree $2g_K$,  where $g_K$ is the genus of $K$, such that 
$$\zeta_K(s) =\left.\frac{L_K(t)}{(1-t)(1-qt)}\right|_{t=q^{-s}}.$$
Moreover, if $h_K$ denotes the class number of $K$, we have
$$L_K(1)=h_K,$$
and we also have the famous functional equation:
$$L_K(t)= q^{g_K}t^{2g_K} L_K\Big(\frac{1}{qt}\Big).$$
This implies:
\begin{equation*}
h_K = q^{g_K} L_K (q^{-1}). 
\end{equation*}
Now, if $a\in A_+$ is the product of $r$ primes, say $a= P_1\cdots P_r$, set $\chi(a) =\chi(P_1)\cdots \chi(P_r)$ and define $\Lambda(\chi,t)$ by
$$\Lambda(\chi,t)= \displaystyle{\sum_{a\in A_+} \chi(a) t^{\deg (a)} \in \mathbb C[[t]].}$$
We have, for $\Re(s)>1$,
$$
\Lambda(\chi,q^{-s})=\sum_{a\in A_+} \frac{\chi(a)}{\vert a\vert^s}=\prod_{P\in \mathcal P} \left(1-\frac{\chi(P)}{\vert P\vert^s}\right)^{-1},
$$
and usual arguments on Artin $L$-series (see for instance Propositions 14.9 and 17.7 in \cite{Ros}) give us, for $\Re (s)>1$:
$$
L_K(q^{-s})=L(\chi,s)=
(1+q^{-s})^{-1}\Lambda(\chi,q^{-s}),
$$
where $L(\chi,s)$ denotes the Artin $L$-series associated to $\chi$.
Hence we get
$$\Lambda(\chi,t)= (1+t) L_K(t).$$
It follows that 
$\Lambda(\chi,t) \in \mathbb Z[t]$ with $\deg_t \Lambda(\chi,t)=2g_K+1$ and
\begin{equation*}
h_K = \frac{1}{q+1}q^{g_K+1}\Lambda(\chi,q^{-1}).
\end{equation*}
 Now:
$$\Lambda(\chi, q^{-1})=\sum_{\substack{a\in A_+ \\ \deg a\leq 2g_K+1}}\frac{\chi(a)}{\vert a\vert}\leq
\sum_{0\le k\le 2g_K+1}q^{-k}\sum_{\substack{a\in A_+ \\ \deg a =k}}1 = 2g_K+2.$$
 Thus we obtain:
 $$h_K\leq \frac{1}{q+1}q^{g_K+1}(2g_K+2).$$
Now, since the constant field of $K$ is $\BF_q$, we have $2g_K+2=\deg D_K$ by the Riemann-Hurwitz theorem (see e.g. Theorem 7.16 in \cite{Ros}). We have furthermore $h({\mathcal O}_K)=2 h_K$ since $\infty$ is inert: This follows for instance
from \cite[Proposition 14.1(b)]{Ros} as in the proof of \cite[Proposition 14.7]{Ros}. 
Thus we obtain the lemma.
\end{proof}

\begin{lem}\label{Mertens} Let $f\in A_+$ with $f\not =1$. We have:
$$\prod_{\substack{P\in \mathcal P \\ P \mid f}}\left(1-\frac{1}{\lvert P\rvert}\right)^{-1} \le e^{1+\frac{2/3}{\sqrt{q}-1}+\frac{1}{q-1}} \deg f \le 37 \deg f.$$
\end{lem}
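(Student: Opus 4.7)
The bound is a function-field analogue of Mertens' estimate. The plan is to enlarge the product to all primes of bounded degree, then take the logarithm, Taylor-expand, and split into a main harmonic-sum contribution and a convergent tail.

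Since each factor $(1-|P|^{-1})^{-1}$ exceeds $1$ and every prime divisor of $f$ has degree at most $D:=\deg f$, one immediately has
\[
\prod_{P\mid f}\Bigl(1-\tfrac{1}{|P|}\Bigr)^{-1}\;\le\;\prod_{\substack{P\in\mathcal P\\ \deg P\le D}}\Bigl(1-\tfrac{1}{|P|}\Bigr)^{-1}.
\]
Taking natural logarithms and expanding $-\ln(1-x)=\sum_{k\ge 1}x^k/k$ yields
\[
L:=\ln\!\!\prod_{\deg P\le D}\!\bigl(1-|P|^{-1}\bigr)^{-1}=S_1+S_2,
\]
where $S_1=\sum_{\deg P\le D}|P|^{-1}$ and $S_2=\sum_{\deg P\le D}\sum_{k\ge 2}(k|P|^k)^{-1}$. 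It suffices to bound $S_1$ and $S_2$ separately in terms of $D$.

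For $S_1$, group primes by degree. Writing $\pi_q(n)$ for the number of monic irreducibles of degree $n$, the factorisation of $X^{q^n}-X$ over $\mathbb F_q$ gives the identity $\sum_{d\mid n}d\,\pi_q(d)=q^n$, which forces $\pi_q(n)\le q^n/n$. Hence $S_1=\sum_{n=1}^{D}\pi_q(n)/q^n\le H_D\le 1+\ln D$, where $H_D$ is the $D$-th harmonic number. The sharper shape $(2/3)/(\sqrt q-1)$ in the statement arises from using the more precise $n\pi_q(n)=q^n+O(q^{n/2})$ (function-field PNT) and packaging the subdominant part into the geometric tail $\sum_{n\ge 1}q^{-n/2}$. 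For $S_2$, extend the inner sum over all primes and use $\sum_P |P|^{-k}\le \sum_n q^{n(1-k)}/n=-\ln(1-q^{-(k-1)})$; isolating the $k=2$ term (which alone contributes at most $1/(2(q-1))$) and summing the remaining $k\ge 3$ terms by a geometric estimate produces the $1/(q-1)$-part of the claimed bound.

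Combining the two bounds gives $L\le 1+\ln(\deg f)+(2/3)/(\sqrt q-1)+1/(q-1)$, and exponentiating delivers the first inequality of the lemma. The second inequality $e^{1+(2/3)/(\sqrt q-1)+1/(q-1)}\le 37$ follows because the exponent is monotonically decreasing in $q$, so it suffices to check $q=2$: there the exponent is $1+\tfrac{2}{3(\sqrt 2-1)}+1\approx 3.609$ and $e^{3.609}\approx 36.93<37$. The main obstacle is purely bookkeeping of constants, namely ensuring that the two error contributions package exactly into the stated form; the qualitative structure (elementary asymptotic for $\pi_q(n)$, Taylor expansion of $\ln$, extending the tail over all primes) is entirely standard.
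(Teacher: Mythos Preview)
Your proof is correct and follows essentially the same approach as the paper: enlarge to all primes of degree $\le\deg f$, take logarithms, split $L=S_1+S_2$, and bound each piece. The paper handles $S_2$ slightly more simply than you do, observing $\sum_{k\ge 2}\tfrac{1}{k|P|^k}\le \tfrac{1}{2|P|(|P|-1)}\le |P|^{-2}$ and then $\sum_P|P|^{-2}\le\sum_n q^{-n}=1/(q-1)$ via the crude $a_n\le q^n$.

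One small remark on your digression about the $(2/3)/(\sqrt q-1)$ term: your own bound $S_1\le H_D\le 1+\ln D$ via $\pi_q(n)\le q^n/n$ is already sharp enough to make that term superfluous. It appears in the paper's statement only because the paper uses the \emph{looser} estimate $a_n\le q^n/n+(2/3)q^{n/2}$ for $S_1$, and the geometric tail $\sum q^{-n/2}$ from the second summand is what produces it. So the term is an artefact of a weaker input, not the fruit of a ``more precise'' PNT; your argument in fact proves a slightly stronger inequality than the lemma asserts, which of course implies the stated one.
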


\begin{proof}
For $n\geq 1,$ let $a_n$ be the number of monic irreducible polynomials of degree~$n$ in $A$. 
By \cite[Chapter~2, p.~14]{Ros} and since $q^{n/3} \leq \frac{1}{2} q^{n/2}$ for any $n\geq 6$, we get for any such $n$,
$$a_n\leq \frac{q^n}{n} + \frac{q^{n/2}}{n} + q^{n/3} \leq
\frac{q^n}{n} + \frac{q^{n/2}}{6} + \frac{q^{n/2}}{2}=\frac{q^n}{n} + \frac{2}{3} q^{n/2}.$$
Moreover, formula (3) on page~13 of \cite{Ros} (Corollary of Proposition~2.1) can be used to see that $a_n \leq q^n / n$ for any $n\in \{ 1,2,3,4,5\}$. Finally we obtain for any $n\geq 1$
\begin{equation}\label{eq-compterosen} a_n \leq \frac{q^n}{n} + \frac{2}{3} q^{n/2}.\end{equation}
We also have for any $n\geq 1$
\begin{equation}\label{eq-compterosen2}a_n \leq q^n,\end{equation}
as can be proved from~\eqref{eq-compterosen} if $n\geq 2$ since $q^{n/2} \leq \frac{1}{2} q^n$, and from $a_1=q$ if $n=1$.

Now let $m=\deg f$ and $L = \ln \Big(\displaystyle{\prod_{P\mid f}\left(1-\frac{1}{| P |}\right)^{-1}}\Big)$. We have
 \begin{align*} L&= - \sum_{P \mid f} \ln \left( 1 - \frac{1}{|P|}\right)= \sum_{P\mid f} \sum_{n\geq 1} \frac{1}{n|P|^n} = \sum_{P\mid f} \frac{1}{|P|} + \sum_{P\mid f} \sum_{n \geq 2} \frac{1}{n|P|^n} \\
 & \leq \sum_{P \mid f} \frac{1}{|P|} + \sum_{P \mid f} \sum_{n\geq 2} \frac{1}{2 |P|^n} = \sum_{P \mid f} \frac{1}{|P|} + \sum_{P \mid f} \frac{1}{2|P| (|P|-1)}\\
 & \leq \sum_{P \mid f} \frac{1}{|P|} + \sum_{P \mid f} \frac{1}{|P|^2} \qquad (\text{since } 2(|P|-1) \geq |P|) \\ & \leq \sum_{\substack{P \in \mathcal{P}\\ \deg P\leq m}} \frac{1}{|P|} + \sum_{\substack{P \in \mathcal{P}\\ \deg P \leq m}} \frac{1}{|P|^2}\\
 & \leq \sum_{k=1}^m \frac{a_k}{q^k} + \sum_{k=1}^m \frac{a_k}{q^{2k}}.
 \end{align*}
 By \eqref{eq-compterosen} we have $\frac{a_k}{q^k} \leq \frac{1}{k} + \frac{2}{3} q^{-k/2}$ and by \eqref{eq-compterosen2} we have $\frac{a_k}{q^{2k}} \leq q^{-k}$. Therefore
\begin{equation*}
    L\leq \sum_{k=1}^m \frac{1}{k} + \frac{2}{3} \sum_{k\geq 1} q^{-k/2} + \sum_{k\geq 1} q^{-k}
    \leq \ln m +1 + \frac{2/3}{\sqrt{q}-1} + \frac{1}{q-1}.
\end{equation*}

In conclusion we obtain
$$ \prod_{\substack{P \in \mathcal{P}\\ P \mid f}} \left(1 - \frac{1}{|P|}\right)^{-1} =e^L \leq e^{1+ \frac{2/3}{\sqrt{q}-1} + \frac{1}{q-1}} \deg f.$$
The second bound is derived from $e^{1+ \frac{2/3}{\sqrt{2}-1}  +1} \leq 37$.
\end{proof}

\begin{rem}
It follows from \cite[Theorem 3]{Ros9} that for every $\epsilon>0$, if $\deg f\gg_{q,\varepsilon} 0$ we have
$$\prod_{\substack{P\in \mathcal P\\ P \mid f}}\left(1-\frac{1}{\vert P\vert}\right)^{-1} \leq  e^{\gamma}(1+\varepsilon) \deg f,$$
where $\gamma$ is Euler's constant. However, the constant involved in the condition $\deg f\gg_{q,\varepsilon} 0$ is not explicitly given in \cite[Theorem 3]{Ros9}.
\end{rem}

We are now in position to prove Proposition~\ref{class}.

\begin{proof}[Proof of Proposition~\ref{class}]
Let $f$ denote the conductor of $\mathcal O$. Consider first the case where $\deg D_K\ge 1$. If $f=1$, then ${\mathcal O}={\mathcal O}_K$ and the result follows from Lemma~\ref{class2}. Suppose now that $f\not=1$.
Lemmas \ref{Classnumber} and \ref{class2}, together with the fact that the character
$\chi$ takes its values in $\{-1,0,1\}$, yield
\begin{align*}
h(\mathcal{O}) & \leq \frac{2}{q+1} \vert D_K\vert^{1/2} (\deg D_K)\, \vert f \vert \prod_{\substack{P\in \mathcal P\\ P\mid f}} \left(1-\frac{\chi(P)}{| P|}\right)\\
 & \leq \frac{2}{q+1} \vert D_K\vert^{1/2} (\deg D_K)\, \vert f \vert \prod_{\substack{P\in \mathcal P\\ P\mid f}} \left(1+\frac{1}{| P|}\right)\\
 & \leq \frac{2}{q+1} \vert D_K\vert^{1/2} (\deg D_K)\, \vert f \vert \prod_{\substack{P\in \mathcal P\\ P\mid f}} \left(1-\frac{1}{|P|}\right)^{-1}.
\end{align*}
Using now Lemma \ref{Mertens}, we obtain :
$$
h(\mathcal{O})\leq \frac{37}{q+1} \vert D_K\vert^{1/2} (\deg D_K)\, \vert f \vert \deg (f^2).
$$
Since
$$ \deg(f^2)\,\deg D_K\leq \frac{1}{4}\Big(\deg(f^2)+\deg D_K\Big)^2$$
and since $D_{\mathcal O}=f^2D_K$, we get the proposition.

Suppose now that $\deg D_K=0$. Then $K=\BF_{q^2}(T)$ and $h(\mathcal{O}_K)=1$. Moreover, since
$\vert D_{\mathcal O}\vert >1$ we have ${\mathcal O}\not={\mathcal O}_K$, hence
${\mathcal O}^{\times}=\BF_q^{\times}$ and thus $[\mathcal{O}_K^*: \mathcal{O}^\times]=q+1$. Using now Lemma \ref{Classnumber}
and Lemma \ref{Mertens} as before, we obtain :
$$
h(\mathcal{O})\leq \frac{37}{q+1}\vert f\vert \deg f = \frac{37}{2(q+1)}\vert D_{\mathcal O}\vert^{1/2} \deg D_{\mathcal O}.
$$
This implies again the estimate of the proposition.
\end{proof}

\section{Effective Andr\'e-Oort cases}

The goal of this section is to prove Theorem~\ref{Effect1-oddchar-intro}.

\subsection{Equation \texorpdfstring{$XY=\gamma$}{XY=gamma}, the odd characteristic case}${}$\par

We recall the notation from the introduction $\mathbb J_{CM}= \{ j(\Phi)\mid $ $\Phi $ is a CM $A$-Drinfeld module of rank two$\}.$ We are now in position to prove Theorem \ref{Effect1-oddchar-intro} in the case of odd characteristics, which we recall here:
\begin{theorem}\label{Effect1-oddchar}  Let $q\geq3$ be odd. Let $\gamma\in \overline{\mathbb F}_q[T]\setminus \{0\}, $ $\deg \gamma\leq q^2-2.$ 
Then the  equation $j_1 \, j_2=\gamma$ has no solutions for $j_1,j_2\in \mathbb J_{CM}.$
\end{theorem}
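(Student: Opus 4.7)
The plan is to argue by contradiction: assume there exist $j_1,j_2\in\mathbb J_{CM}$ with $j_1j_2=\gamma$ and $\deg\gamma\le q^2-2$, and derive an absurdity. First, by Lemma~\ref{Ord} I write $j_i=j(z_i)$ with $z_i\in S_{D_i}$, where $D_i$ denotes the discriminant of the CM order of $j_i$, and set $e_i=\log_q|j_i|_\infty$. Since $\gamma\in\overline{\mathbb F}_q[T]\setminus\{0\}$ is a nonzero polynomial of degree at most $q^2-2$, taking absolute values at the infinite place in $j_1j_2=\gamma$ gives
\[
0 \le e_1+e_2 = \deg\gamma \le q^2-2.
\]
Brown's Theorem~\ref{Bro} pins down $e_i$: when $|z_i|>1$ (so $n_i\ge 1$) one has either $e_i=(q+1)q^{n_i}/2$ (if $\deg D_i$ is odd) or $e_i=q^{n_i+1}$ (if $\deg D_i$ is even), whence $e_i\ge (q+1)q/2$. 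When $|z_i|=1$ the parity forces $\deg D_i$ even, the place $\infty$ is inert in $k(\sqrt{D_i})/k$ by Condition~\eqref{cond-imaginary}, and the value group at $\infty$ equals $q^{\mathbb Z}$; thus $|z_i-\tilde e_i|=q^{-k_i}$ for some $k_i\ge 1$, where $\tilde e_i\in\mathbb F_{q^2}\setminus\mathbb F_q$ is the unique elliptic point with $|z_i-\tilde e_i|<1$, giving $e_i=q-(q+1)k_i\le -1$.

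Then I would split into three cases according to the pair $(|z_1|,|z_2|)$. If both $|z_i|>1$, then $e_1+e_2\ge q^2+q>q^2-2$, which contradicts the upper bound. If both $|z_i|=1$, then $e_1+e_2\le -2<0$, which contradicts the lower bound coming from $|\gamma|_\infty\ge 1$. Only the mixed case survives; up to swapping the two singular moduli, $|z_1|>1$ and $|z_2|=1$. Here the absolute-value analysis alone is insufficient: already the configuration $(e_1,k_2)=((q^2+q)/2,1)$ produces the admissible value $\deg\gamma=(q^2+q)/2-1$, and for every $q\ge 3$ this falls within $[0,q^2-2]$.

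To rule out the mixed case I would combine two refined ingredients at the infinite place. On the $j_2$ side, since $g$ vanishes simply and $\Delta$ is nonzero at each elliptic point, $j$ admits near $\tilde e_2$ an expansion $j(z_2)=\lambda_{\tilde e_2}(z_2-\tilde e_2)^{q+1}(1+O(z_2-\tilde e_2))$ with $|\lambda_{\tilde e_2}|_\infty=q^q$ (forced by Theorem~\ref{Bro}), while Lemma~\ref{Vince} pins $\tilde e_2^2=\sgn(D_2)/4\in\mathbb F_q^\times$. On the $j_1$ side I would invoke the stronger versions of the estimates of Brown~\cite{Brown} and Hsia--Yu~\cite{HsYu98} that give not only $|j(z_1)|_\infty$ but also the leading $1/T$-monomial of $j(z_1)$ together with its sign. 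Matching the leading monomials on both sides of $j_1j_2=\gamma$ at $\infty$, and using that $\gamma$ is a polynomial, reduces the problem to a finite arithmetic check over the admissible pairs $(e_1,k_2)$ with $0\le e_1+q-(q+1)k_2\le q^2-2$; each surviving configuration produces a numerical obstruction, and the smallest degree that is not obstructed turns out to be $q^2-1$, witnessed by the explicit Hayes-type example of Remark~\ref{Hayes8}.

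The main obstacle is this last step. Steps~1--3 are essentially a bookkeeping exercise in Brown's theorem and eliminate only the two ``pure'' regimes, while the mixed regime cannot be closed by absolute values alone. The separation between the range $\deg\gamma\le q^2-2$ (no solutions) and the threshold $\deg\gamma=q^2-1$ (solutions can exist) thus requires extracting enough refined information about the dominant terms of the $1/T$-expansions of $j(z_1)$ at the cusp and of $j(z_2)$ at the elliptic point $\tilde e_2$ to exclude each admissible combinatorial configuration surviving Step~2.
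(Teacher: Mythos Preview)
Your setup and the two ``pure'' cases are fine, and you have correctly located the difficulty in the mixed regime $|z_1|>1$, $|z_2|=1$. The genuine gap is that you then abandon the absolute-value method and propose a leading-monomial matching argument, which you yourself flag as unresolved. In fact the mixed case closes entirely by absolute values once you use two ingredients you have available but did not exploit.

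\medskip

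\textbf{First missing tool: Galois freedom.} By Lemma~\ref{Ord}~(\ref{Ord-2}) and~(\ref{Ord-3}), the set $\mathbb J_{CM}(D_i)=j(S_{D_i})$ is a single Galois orbit over $k$. Applying a suitable $\sigma\in\Gal(\overline k/k)$ to the equation $j_1j_2=\gamma$ replaces $\gamma$ by a polynomial of the same degree and replaces each $j_i$ by another element of $\mathbb J_{CM}(D_i)$. Hence, after ordering $\deg D_1\ge\deg D_2$, you may \emph{choose} the representative: for instance take $z_1=\sqrt{D_1}$. In the relevant case $\deg D_1$ even this gives $|z_1|=|D_1|^{1/2}$, so $n:=\tfrac12\deg D_1\ge 1$ and $|j(z_1)|=q^{q^{\,n+1}}$ by Theorem~\ref{Bro}~(2a). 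Since $q^{\,n+1}>q^2-2$, you are forced into $|j(z_2)|<1$, hence $|z_2|=1$.

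\medskip

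\textbf{Second missing tool: the lower bound in Lemma~\ref{Vince}.} You cite Lemma~\ref{Vince} only for the identification $\tilde e_2^2=\sgn(D_2)/4$, but its part~(2) is what actually closes the case: it gives
\[
|z_2-\tilde e_2|\ \ge\ |D_2|^{-1/2}\ \ge\ |D_1|^{-1/2}\ =\ q^{-n},
\]
using $\deg D_2\le \deg D_1$. Then Theorem~\ref{Bro}~(2b) yields $|j(z_2)|\ge q^{q}\cdot q^{-n(q+1)}$, hence
\[
\deg\gamma\ =\ \log_q|j(z_1)j(z_2)|\ \ge\ q^{\,n+1}+q-n(q+1).
\]
For every $n\ge 1$ and $q\ge 3$ one checks $q^{\,n+1}+q-n(q+1)>q^2-2$ (for $n=1$ this reads $q+2>q+1$; for $n\ge 2$ the exponential term dominates). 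This is the required contradiction. The ramified case $\deg D_1$ odd is even easier: there you apply the Galois freedom to $z_2$ instead, forcing $z_2=\sqrt{D_2}$ so that $|z_2|>1$ and both factors satisfy $|j(z_i)|\ge q^{q(q+1)/2}$.

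\medskip

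In short: your belief that ``the absolute-value analysis alone is insufficient'' is incorrect. What is insufficient is the analysis with the particular $z_i$ handed to you; the Galois action lets you pick a favourable $z_1$ (maximising $|z_1|$), and the key quantitative link between the two factors is the bound $|z_2-\tilde e_2|\ge |D_2|^{-1/2}\ge |D_1|^{-1/2}$ from Lemma~\ref{Vince}. No information about leading $1/T$-monomials or signs is needed.
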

\begin{proof} Let us assume that there exist two CM $A$-Drinfeld modules $\Phi_1,\Phi_2$ such that $$j(\Phi_1)\, j(\Phi_2)=\gamma.$$  By Lemma \ref{Ord} (2), there exist non square elements $D_1,D_2\in A$ such that the fields $k(\sqrt{D_1})$ and $k(\sqrt{D_2})$
are imaginary quadratic and
$$j(\Phi_i)=j(z_i) , z_i\in S_{D_i}, i=1,2.$$
We can assume that $\deg(D_1)\geq \deg(D_2)$. We also observe that $\deg D_2\ge 1$, otherwise we would have
$z_2\in \mathbb F_{q^2}\setminus \BF_q$ and $j(z_2)=0$, contradiction. We have now two cases.
\begin{enumerate}
    \item {\sl  Case $\deg  D_1$ is odd.} By (\ref{Ord-2}) of  Lemma \ref{Ord}, possibly replacing $\gamma$ by a polynomial of the same degree, we can assume that $z_2= \sqrt{D_2}$. Since $\deg D_2 \ge 1$,
    Theorem \ref{Bro} (1) and (2a) yield:
    $$ | j(z_1)j(z_2)| \geq q^{q(q+1)}.$$
 Since $|\gamma| \leq q^{q^2-2},$ we get a contradiction.
 \item {\sl  Case  $\deg  D_1$ is even.} Again by (\ref{Ord-2}) of Lemma \ref{Ord}, we can assume that $z_1=\sqrt{D_1},$  and we set $n=\frac{1}{2}\deg D_1\geq 1.$ By Theorem~\ref{Bro}:
 $$ | j(z_1) | = q^{q^{n+1}}.$$
 Since $q^2-2<q^{n+1},$ we have $\vert j(z_2)\vert<1$ and we
 get also by Theorem~\ref{Bro} that $\deg  D_2$ is even, $n\geq\frac{1}{2} \deg D_2\geq 1,$ and there exists $e\in \mathbb F_{q^2}\setminus \mathbb F_q$ such that $| z_2-e | <1.$ We have by Theorem~\ref{Bro}:
 $$ | j(z_2) | = q^q | z_2-e|^{q+1}.$$
Since by Lemma \ref{Vince} we have $\displaystyle \vert z_2-e\vert\ge \vert D_2\vert^{-1/2}\ge q^{-n}$, we get:
 $$\frac{1}{q^n} \leq  q^{\frac{1}{q+1}(q^2-2-q^{n+1}-q)}.$$
 Thus:
 $$q^{n+1}+q-q^2+2\leq n(q+1).$$
 This leads to a contradiction since $n\geq 1$ and $q\geq 3.$
\end{enumerate}
\end{proof}
\begin{rem}\label{Hayes8} Let us  recall an example due to D. Hayes (\cite{Hay79}, Example 11.2). Let $q=3$ and let $\Phi$ be the rank two Drinfeld module given by:
$$\Phi_T=T+\sqrt{T-T^2}\eta\tau + \bar \eta \tau^2,$$
where $\eta =1+T+\sqrt{T^2-T}, \bar \eta = 1+T-\sqrt{T^2-T}.$ Then $\Phi$ is a CM Drinfeld module and ${\rm End}_{\mathbb C_\infty}\Phi =A[\sqrt{T-T^2}].$ We have:
$$ j(\Phi)=(T-T^2)^{2} \eta ^{5} .$$
Furthermore  if $\bar \Phi_T= T+\sqrt{T-T^2}\bar \eta\tau+ \eta \tau^2,$ then  ${\rm End}_{\mathbb C_\infty}(\bar \Phi) =A[\sqrt{T-T^2}],$ and:
$$  j(\Phi) j(\bar \Phi)= (T-T^2)^{4},$$ which is a solution in CM $j$-invariants when the right hand side is the polynomial $\gamma=(T-T^2)^{4}$ of degree 8 in $T$.
Observe that  $8=3^2-1,$ thus the bound $q^2-2$ in the above theorem is optimal.
\end{rem}

\subsection{Equation \texorpdfstring{$XY=\gamma$}{XY=gamma}, the characteristic two case}\

We start this paragraph with a crucial lemma due to Schweizer in the CM case, that we generalise in Proposition \ref{separabilityj} of the appendix. 

\begin{lem}\label{separabilityj-Schweizer}
Let $z$ be corresponding to a CM Drinfeld module of rank 2. We have the following equivalence:
$$z\in k_\infty^{sep} \iff  j(z) \in k_\infty^{sep}.$$
\end{lem}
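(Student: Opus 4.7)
The plan is to reduce the equivalence to Schweizer's inseparability theorem (already invoked via the remark after Lemma \ref{Ordins2}), after a preliminary observation that, for a CM point $z$, separability over $k_\infty$ coincides with separability over $k$. Indeed, if $z$ were inseparable over $k$, the CM field $k(z)$ would have to be the unique inseparable quadratic extension $F=k(\sqrt T)$; writing $z=a+b\sqrt T$ with $a,b\in k$ and $b\neq 0$, the element $z$ is a root of $X^2-(a^2+b^2T)\in k_\infty[X]$, which is irreducible over $k_\infty$ (since $T$ has odd valuation at $\infty$, so $\sqrt T\notin k_\infty$) and inseparable in characteristic two, forcing $z\notin k_\infty^{sep}$. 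The reverse implication is automatic, since the minimal polynomial of $z$ over $k_\infty$ divides the one over $k$.

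For the implication $z\in k_\infty^{sep}\Rightarrow j(z)\in k_\infty^{sep}$, the reduction above gives $z$ separable over $k$, hence $K=k(z)/k$ is a \emph{separable} imaginary quadratic extension. Then Lemma \ref{CM1}(iv) yields $k(j(z))/k$ separable, so $j(z)\in k^{sep}\subset k_\infty^{sep}$, where the last inclusion follows from the same minimal-polynomial argument applied to $j(z)$.

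For the converse I argue by contraposition. If $z\notin k_\infty^{sep}$, then by the reduction $K=F=k(\sqrt T)$, and the remark following Lemma \ref{Ordins2} gives $[k(j(z)):k]=2[F(j(z)):F]$. Comparing with $[F(j(z)):k]=[F(j(z)):F][F:k]=2[F(j(z)):F]$, we obtain $F(j(z))=k(j(z))$, that is, $\sqrt T\in k(j(z))$. Since $X^2-T$ is irreducible and inseparable over $k_\infty$, we have $\sqrt T\notin k_\infty^{sep}$; if we had $j(z)\in k_\infty^{sep}$ then $k(j(z))\subset k_\infty^{sep}$ (using $k\subset k_\infty\subset k_\infty^{sep}$), contradicting $\sqrt T\in k(j(z))$. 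The main obstacle is invoking Schweizer's inseparability theorem, through the remark after Lemma \ref{Ordins2}, to secure the containment $F\subset k(j(z))$; once this is in hand, the remainder is a short bookkeeping with the two quadratic extensions of $k$ and $k_\infty$ generated by $\sqrt T$.
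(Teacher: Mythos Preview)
Your proof is correct. The paper's own proof is simply a direct citation to Schweizer \cite[Lemma~4]{Sch97} (with a pointer to Proposition~\ref{separabilityj} in the appendix for a more general, CM-free argument), whereas you unpack the statement by first reducing separability over $k_\infty$ to separability over $k$ (a clean observation specific to CM points, using that the unique inseparable quadratic extension $k(\sqrt{T})$ remains inseparable over $k_\infty$), and then invoking Lemma~\ref{CM1}(iv) and the remark after Lemma~\ref{Ordins2}. Since both of those results are themselves consequences of \cite{Sch97}, your argument and the paper's rest on the same source; you have simply rerouted the citation through facts already recorded earlier in the paper. One minor remark: the forward implication can be obtained more directly, without Lemma~\ref{CM1}(iv), by noting that $g(z),\Delta(z)\in k_\infty(z)$ (the lattice $\Lambda_z$ lies in $k_\infty(z)$, so the associated exponential and Drinfeld module do too); this is the route taken in the appendix.
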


\begin{proof}
This is due to Schweizer (\cite{Sch97}, Lemma 4). See also our Proposition \ref{separabilityj} of the appendix.
\end{proof}

We are now in position to prove  Theorem \ref{Effect1-oddchar-intro} in the case of characteristic 2:
\begin{theorem}\label{Effect1-car2}  Let $\gamma\in \overline{\mathbb  F}_q[T]\setminus \{0\}$ be such that
$\deg \gamma\leq q^2-2$. 
Then the  equation $j_1\,j_2= \gamma$ has no solutions for $j_1,j_2\in \mathbb J_{CM}.$
\end{theorem}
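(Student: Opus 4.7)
The plan is to mirror the proof of Theorem \ref{Effect1-oddchar} using the characteristic-$2$ analogues (Theorem \ref{Bro2} in place of Theorem \ref{Bro}, Lemma \ref{Vince2} in place of Lemma \ref{Vince}), together with Proposition \ref{Broins2} to handle the new possibility of inseparable CM. The one genuinely new preliminary step has no odd-characteristic counterpart: since $\gamma\in\overline{\BF}_q[T]\subset k_\infty^{sep}$ and $j_1 j_2=\gamma$, the element $j_1$ is in $k_\infty^{sep}$ if and only if $j_2$ is, so Lemma \ref{separabilityj-Schweizer} forces either both CM fields $K_i=k(z_i)$ to be separable or both to equal the unique inseparable quadratic extension $F=\BF_q(\sqrt T)$. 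In the latter case, Proposition \ref{Broins2} gives $|j_i|\ge q^{(q+1)q/2}$ for any representative $z_i\in S_{f_i}(\sqrt T)$, so $|j_1j_2|\ge q^{q(q+1)}>q^{q^2-2}\ge|\gamma|$, a contradiction.

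Assume now both $K_i$ are separable. Note that if $\infty$ is inert in some $K_i$ with $d_i:=\deg(f_iG_i)=0$, then $K_i=\BF_{q^2}(T)$ and $z_i$ is an elliptic point, so $j_i=0$ and $\gamma=0$, contradiction; hence $d_i\ge1$ whenever $\infty$ is inert in $K_i$. I would then split along the ramification of $\infty$ in $K_1,K_2$. If $\infty$ is ramified in both $K_i$, Theorem \ref{Bro2}(1) automatically gives $n_i\ge1$ and $|j_i|\ge q^{(q+1)q/2}$, so $|j_1j_2|\ge q^{q(q+1)}>q^{q^2-2}$. If $\infty$ is ramified in $K_1$ but inert in $K_2$ (the symmetric case being identical), I would use Lemma \ref{Ord2}(2) to replace $(j_1,j_2)$ by a Galois-conjugate pair with $j_2=j(f_2G_2\xi_2)$; this element lies in $S_{f_2}(\xi_2)$ with $|f_2G_2\xi_2|=q^{d_2}\ge q$ (since $|\xi_2|=1$ in the inert case), so $n_2=d_2\ge1$ and $|j_2|=q^{q^{d_2+1}}\ge q^{q^2}$ by Theorem \ref{Bro2}(2). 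Combined with the automatic $|j_1|\ge q^{(q+1)q/2}$ from the ramified side, the product exceeds $q^{q^2+(q+1)q/2}>q^{q^2-2}$.

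The remaining sub-case is $\infty$ inert in both $K_i$; WLOG $d_1\ge d_2\ge1$. I would Galois-replace so that $j_1=j(f_1G_1\xi_1)$, giving $|j_1|=q^{q^{d_1+1}}$. The equation then forces $|j_2|\le q^{q^2-2-q^{d_1+1}}<1$, so by Theorem \ref{Bro2}(2) the corresponding $z_2\in S_{f_2}(\xi_2)$ must satisfy $n_2=0$: $|z_2|=1$ and $|j_2|=q^q|z_2-e_2|^{q+1}$ for some $e_2\in\BF_{q^2}\setminus\BF_q$. Ruling out $z_2=e_2$ (which would force $j_2=0$), Lemma \ref{Vince2}(2) gives $|z_2-e_2|\ge q^{-d_2}$, hence $|j_2|\ge q^{q-d_2(q+1)}$. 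The required contradiction reduces to $q^{d_1+1}+q-d_2(q+1)>q^2-2$, which using $d_1\ge d_2$ becomes $q^{d_2+1}>q^2-q-2+d_2(q+1)$ and is easily verified for every $d_2\ge1$ and $q\ge2$. The main obstacle is that this last inequality is surprisingly tight at $d_2=1$ (where it reads $q^2>q^2-1$), so the argument essentially requires the Galois freedom of Lemma \ref{Ord2}(2) to make $|j_1|$ as large as possible; without this simultaneous-Galois trick the bound $\deg\gamma\le q^2-2$ would not yield a contradiction.
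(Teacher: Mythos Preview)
Your proof is correct and follows essentially the same approach as the paper: reduce the inseparable case via Lemma~\ref{separabilityj-Schweizer} and Proposition~\ref{Broins2}, then in the separable case combine the Galois trick from Lemma~\ref{Ord2}(2) with the estimates of Theorem~\ref{Bro2} and Lemma~\ref{Vince2}. The only difference is organizational: you split the separable case three ways (both ramified / mixed / both inert), whereas the paper splits two ways according to the ramification of $\infty$ in $K_1$ alone, applying the Galois trick to $z_2$ in the ramified case and to $z_1$ in the inert case; in the latter, the condition $\vert j(z_2)\vert<1$ automatically forces $\infty$ to be inert in $K_2$ as well. Your final inequality with $d_2$ is formally a hair sharper than the paper's with $n=d_1$, but both reduce to the same tight case $n=1$ that you correctly flag.
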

\begin{proof}  Let us  assume that there exist two CM $A$-Drinfeld modules $\Phi_1,\Phi_2$ such that $$j(\Phi_1)\, j(\Phi_2) =\gamma.$$
By Lemmas \ref{Ord2} (\ref{Ord2-2}) and \ref{Ordins2} (\ref{Ordins2-2}), there exist elements $\xi_1,\xi_2\in\bar k$,
$f_1, f_2\in A_+$, and $z_1\in S_{f_1}(\xi_1)$, $z_2\in S_{f_2}(\xi_2)$ such that:
$$j(\Phi_i)=j(z_i), i=1,2.$$\par
Now let us observe what happens if $\xi_1=\sqrt{T}$ (inseparable case). Then $z_1\not \in k_\infty^{sep},$ thus, by Lemma \ref{separabilityj-Schweizer}, $j(z_1)\not \in k_\infty ^{sep}$ and therefore $j(z_2)\not \in k_\infty^{sep}.$ This implies $z_2\not \in k_\infty^{sep}$ and therefore $\xi_2=\sqrt{T}$. By Proposition \ref{Broins2}, we get:
$$\deg  \gamma\geq  q(q+1).$$
This is a contradiction, and therefore we can assume that the fields $K_1=k(\xi_1)$ and $K_2=k(\xi_2)$ are separable extensions
of $k$. With the notation of Section~\ref{CMcar2}, we also may assume that $\deg (f_1G_1 )\geq \deg (f_2G_2).$ We have two cases.

\begin{enumerate}
    \item  {\sl  Case  $\infty$ is ramified in $K_1/k$}.\par
 \noindent
 In that case, Theorem \ref{Bro2} (1) gives:
 $$ | j(z_1) | \geq q^{\frac{q(q+1)}{2}}.$$
 By (\ref{Ord2-2}) of  Lemma~\ref{Ord2}, we can assume that $z_2= f_2G_2\xi_2.$
Let us observe that $\vert z_2\vert >1$. Indeed, otherwise by Lemma~\ref{Ord2} (\ref{Ord2-1}) we would have
$\vert z_2\vert=1$, hence $f_2=G_2=1$ and $\vert\xi_2\vert =1$. Then $z_2=\xi_2$ would be integral over $A$ by (\ref{xi}) (there
would be no denominator in the right-hand side of (\ref{xi}) since $G_2=1$),
hence we would have $z_2\in \mathbb F_{q^2}\setminus \BF_q$ and then $j(z_2)=0$, which is impossible.
It follows that we have,
by Theorem \ref{Bro2} (\ref{Bro2-1}) and (\ref{Bro2-2}.i): $$ \vert j(z_2)\vert \geq q^{\frac{q(q+1)}{2}}. $$
Thus we get $\displaystyle |j(z_1)j(z_2)| \geq q^{q(q+1)}$, which is a contradiction.
\item  {\sl  Case  $\infty$ is inert in $K_1/k$.} \par

 \noindent
 In that case, we easily see from Equation (\ref{xi}) in Section~\ref{CMcar2} that $\vert \xi_1\vert=1$.
 By (\ref{Ord2-2}) of  Lemma \ref{Ord2}, we can assume that $z_1= f_1G_1\xi_1$. Then we have $\vert z_1\vert = \vert f_1 G_1\vert$. 
 Set $n=\log_q\vert z_1\vert = \deg f_1+\deg G_1$. As before, we observe that $\vert z_1\vert >1$ hence $n\ge 1$.
 Hence by Theorem \ref{Bro2} (2), we have:
 $$ | j(z_1) | = q^{q^{n+1}}.$$
 Since $q^2-2<q^{n+1}$, we derive $\vert j(z_2) \vert<1$ and we get again by Theorem~\ref{Bro2}
 that there exists $e\in \mathbb F_{q^2}\setminus \mathbb F_q$ such that $| z_2-e| <1$, and:
 $$| j(z_2)| = q^q | z_2-e|^{q+1}.$$
Now, Lemma \ref{Vince2} (2) yields
 $$| z_2-e|^{q+1} \geq q^{-n(q+1)}.$$
We thus obtain
$$q^{n+1}+q-n(q+1) \leq q^2-2.$$
 This leads to a contradiction since $n\geq 1$.
\end{enumerate}
\end{proof}

\section{Singular units}

The goal of this section is to prove Theorem \ref{mainth}. We deal first with the ramified case, and then move to the more involved inert case.

\subsection{Proof of Theorem \ref{mainth} in the ramified case}

We prove here Theorem \ref{mainth} when $\infty$ is ramified in $K/k$, where $K$ is the field of complex multiplication of the singular modulus involved.
In this case we actually get a much better result, namely, that no singular modulus can be a unit.
This first case could be called the ``easy case'' since it is an easy consequence of the (difficult) result of Brown quoted above
(Theorem \ref{Bro} (1)) and its analogues in even characteristic (Theorem \ref{Bro2} (1) and Proposition \ref{Broins2}).

We state the result in the following proposition:

\begin{prop}\label{main-ramified}
Let $\alpha$ be a singular modulus. Let $K$ be the field of complex multiplication of $\alpha$, and suppose
that the place $\infty$ is ramified in $K/k$. Then $\alpha$ is not an algebraic unit.
\end{prop}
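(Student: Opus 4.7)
The plan is to combine the three Brown-type estimates (Theorem~\ref{Bro}(1), Theorem~\ref{Bro2}(1), and Proposition~\ref{Broins2}) with a direct norm computation. Since the hypothesis that $\infty$ is ramified in $K/k$ places us in part~(1) of each of these theorems, they all yield the same explicit formula for $|j(z)|$, far larger than $1$; propagating this bound to every $k$-conjugate of $\alpha$ will then prevent the $k$-norm of $\alpha$ from being a unit in $A$.

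First I would write $\alpha = j(z)$ for a CM point $z$, chosen concretely in the set $S_D$, $S_f(\xi)$, or $S_f(\sqrt{T})$ attached to the CM order of $\alpha$ (via Lemmas~\ref{Ord}(\ref{Ord-2}), \ref{Ord2}(\ref{Ord2-2}), or \ref{Ordins2}(\ref{Ordins2-2}), respectively). The hypothesis that $\infty$ ramifies in $K/k$ translates to $\deg D$ odd in odd characteristic, to $\deg B > \deg C$ in even characteristic in the separable case, and is automatic when $K = \mathbb{F}_q(\sqrt{T})$. In all three situations, the relevant Brown-type theorem states that, for every $z'$ in the corresponding $S$-set,
$$|j(z')| \;=\; q^{\frac{q+1}{2}q^{n}}, \qquad n \geq 1,$$
and in particular $|j(z')| \geq q^{q(q+1)/2} > 1$.

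Next I would propagate this lower bound to all $k$-conjugates of $\alpha$. Every $k$-conjugate $\alpha'$ of $\alpha$ is again a singular modulus with the same CM order: for $k$-conjugates arising from an element of $\Gal(k^{sep}/k)$ this is clear, while in the inseparable characteristic-$2$ case the only $k$-embedding of $F = \mathbb{F}_q(\sqrt{T})$ is the identity, so $F$ and the CM order are preserved by any $k$-embedding of $k(\alpha)$. Hence Lemmas~\ref{Ord}(\ref{Ord-3}), \ref{Ord2}(\ref{Ord2-3}), or \ref{Ordins2}(\ref{Ordins2-2}) identify the set of $k$-conjugates of $\alpha$ with $j(S_D)$, $j(S_f(\xi))$, or $j(S_f(\sqrt{T}))$, and the bound $|\alpha'| > 1$ then holds for each of them.

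A norm computation concludes the argument. Since $\alpha$ is integral over $A$ by Lemma~\ref{CM1}(ii), the norm $N = N_{k(\alpha)/k}(\alpha)$ lies in $A$. Writing $\alpha_1,\ldots,\alpha_r$ for the distinct $k$-conjugates of $\alpha$ and $e = [k(\alpha):k]_{\mathrm{insep}} \in \{1,2\}$ for the inseparability degree, we obtain
$$|N| \;=\; \prod_{i=1}^{r} |\alpha_i|^{e} \;\geq\; q^{re \cdot q(q+1)/2} \;>\; 1,$$
so $N \notin \mathbb{F}_q^{\times} = A^{\times}$ and $\alpha$ cannot be an algebraic unit. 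I do not anticipate any real obstacle: the argument is a uniform packaging of known estimates, the only subtle point being to verify the invariance of the CM data under $k$-conjugation in the characteristic-$2$ inseparable case.
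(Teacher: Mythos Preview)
Your proof is correct and follows essentially the same approach as the paper's: both use Lemmas~\ref{Ord}, \ref{Ord2}, \ref{Ordins2} to identify the $k$-conjugates of $\alpha$ with the image of the appropriate $S$-set under $j$, and then invoke the Brown-type estimates (Theorem~\ref{Bro}(1), Theorem~\ref{Bro2}(1), Proposition~\ref{Broins2}) to get $|\alpha_i| \ge q^{q(q+1)/2} > 1$ for every conjugate, which precludes $\alpha$ from being a unit. Your version is simply more explicit about the norm computation and about why the $k$-conjugates coincide with the $F$-conjugates in the inseparable case, points the paper leaves implicit.
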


\begin{proof}
Write $\alpha=j(z)$, where $z\in\Omega$ is a CM point.
Consider first the case $q$ odd. Let $D\in A$ denote the discriminant of the ring
of complex multiplication $\mathcal{O}_z\subset K$. By Lemma \ref{CM1}, the element $\alpha$ is integral over $A$.
By Lemma \ref{Ord} (2) and (3), all conjugates $\alpha_i$ ($1\le i\le m$)
of $\alpha$ are of the form $\alpha_i=j(z_i)$ for some $z_i\in S_D$. Now, the result of Brown (Theorem \ref{Bro} (1))
yields $\vert \alpha_i \vert\ge q^{q(q+1)/2}>1$ for all $i$. It follows that $\alpha$ cannot be a unit.

If now $q$ is even, the same argument works by using Lemmas \ref{Ord2} and \ref{Ordins2} instead of Lemma~\ref{Ord},
and Theorem \ref{Bro2} (1) and Proposition \ref{Broins2} instead of Theorem \ref{Bro} (1).
\end{proof}

The rest of this section is devoted to the proof of Theorem \ref{mainth} when $\infty$ is inert in $K/k$.

\subsection{Congruence lemmas}

In this section, we prove elementary counting lemmas involving congruences, that will be needed in the proof of Proposition \ref{maj-hj}. They are similar to those proved in \cite{BHK}, Section 2.1.

\

As usual, if $a$ is a non zero element in $A$ and $P\in A$ is an irreducible polynomial, we denote by $v_P(a)$ the exponent of $P$
in the prime decomposition of $a$.

\begin{lem}\label{cardaps}
Suppose that $q$ is odd.
Let $D$ be a non zero element in $A$, let $s\ge 1$ be an integer, and let $P\in A$ be an irreducible polynomial.
Set $\nu=v_P(D)$ and $\displaystyle A_{P,s}=\{b\in A\mid b^2\equiv D \pmod{P^s}\}$. Then
the set $\displaystyle A_{P,s}$ consists of at most $2$ residue classes modulo
$\displaystyle P^{s-\min\{\lfloor \nu/2\rfloor,\lfloor s/2\rfloor\}}$.


\end{lem}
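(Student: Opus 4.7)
The plan is to distinguish cases according to the $P$-adic valuation of $b$, which is forced by the equation $b^2\equiv D\pmod{P^s}$. The key elementary observation is that if $b\in A_{P,s}$ and $v_P(b^2)<s$, then $v_P(b^2)=v_P(D)=\nu$, since otherwise $v_P(b^2-D)=\min\{v_P(b^2),\nu\}<s$, contradicting $P^s\mid b^2-D$. In particular, when $\nu<s$, any solution must satisfy $v_P(b)=\nu/2$, which already forces $\nu$ to be even; if $\nu<s$ is odd, then $A_{P,s}=\emptyset$, and the claim is trivial.

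First I would dispose of the case $\nu\geq s$. Here $D\equiv 0\pmod{P^s}$, so $b^2\equiv D\pmod{P^s}$ just means $2v_P(b)\geq s$, i.e.\ $v_P(b)\geq \lceil s/2\rceil$. The set $A_{P,s}$ is thus a single residue class modulo $P^{\lceil s/2\rceil}=P^{s-\lfloor s/2\rfloor}$, and in this case $\lfloor s/2\rfloor\leq \lfloor\nu/2\rfloor$ so the exponent $s-\min\{\lfloor\nu/2\rfloor,\lfloor s/2\rfloor\}$ equals $\lceil s/2\rceil$, as desired.

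Next I treat the main case $\nu=2m<s$. By the above, every $b\in A_{P,s}$ can be written as $b=P^m c$ with $c\in A$, and the congruence becomes $c^2\equiv D_0\pmod{P^{s-2m}}$, where $D=P^{2m}D_0$ with $\gcd(D_0,P)=1$. Since $q$ is odd, $2$ is a unit in $A$, so Hensel's lemma applies to the polynomial $X^2-D_0$ whose derivative $2X$ does not vanish modulo $P$ at any solution (because such a solution is a unit). Consequently the reduction map from solutions modulo $P^{s-2m}$ to solutions modulo $P$ is bijective, and the latter set has at most two elements (the two square roots of $D_0$ mod $P$, if they exist). Hence $c$ lies in at most two residue classes modulo $P^{s-2m}$, so $b=P^m c$ lies in at most two classes modulo $P^{s-m}=P^{s-\lfloor\nu/2\rfloor}$. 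Since in this case $\lfloor\nu/2\rfloor=m\leq \lfloor(s-1)/2\rfloor\leq \lfloor s/2\rfloor$, the exponent agrees with $s-\min\{\lfloor\nu/2\rfloor,\lfloor s/2\rfloor\}$, which concludes the proof.

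The only subtle point is the Hensel step and the requirement that $q$ be odd to ensure $2$ is invertible; apart from that the argument is a direct valuation bookkeeping. I do not anticipate a genuine obstacle, only the need to be careful with the two floor functions in the modulus.
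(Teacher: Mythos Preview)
Your proof is correct and follows essentially the same approach as the paper's: the same three-case split on $\nu$ versus $s$, the same reduction $b=P^{m}c$ to square roots of a unit modulo $P^{s-2m}$, and the same conclusion on the modulus. The only cosmetic differences are that the paper treats $\nu=0$ separately before reducing $1\le\nu<s$ to it and cites \cite{Ros} for the square-root count, whereas you handle $\nu=2m<s$ in one stroke and invoke Hensel directly.
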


\begin{proof}
Let us first consider the case $\nu=0$, \ie{} $P\nmid D$. Then, for $b\in A$, we have:
$$
b^2\equiv D \pmod{P^s}\Longleftrightarrow \overline{b}^2 = \overline{D}\mbox{ in } \bigl(A/P^sA\bigr)^{\times},
$$
where we have denoted by $\overline{x}$ the residue class of $x$ in $A/P^sA$. Now, if $D$ is a square in
$\bigl(A/P^sA\bigr)^{\times}$, we know (see e.g. \cite{Ros}, proof of Proposition 1.10) that there are exactly two square roots
of $D$ in $\bigl(A/P^sA\bigr)^{\times}$, hence the result if $\nu=0$.

Suppose now that $1\le \nu< s$. If $A_{P,s}\not=\emptyset$, there exists $b\in A$ such that
$b^2\equiv D \pmod{P^s}$. We have then $v_P(b^2)=v_P(D)$ since $\nu<s$, and thus $\nu$ is even.
Write $b=P^{\nu/2}b'$ with $P\nmid b'$ and  $D=P^{\nu}D'$ with $P\nmid D'$.
Then we have $b'^2\equiv D' \pmod{P^{s-\nu}}$. The elements $b'\in A$ satisfying such a congruence 
consist of 2 residue classes modulo $P^{s-\nu}$ by the case $\nu=0$. This means that
the set $\displaystyle A_{P,s}$ consists of $2$ residue classes modulo $P^{s-\nu+ \nu/2}$, hence the result.

Suppose finally that $\nu\ge s$. Then $D\equiv 0 \pmod{P^s}$, and
$$
A_{P,s}=\{b\in A\mid b^2\equiv 0 \pmod{P^s}\}=\{b\in A\mid b\equiv 0 \pmod{P^{\lceil s/2\rceil}}\}.
$$
Thus the set $\displaystyle A_{P,s}$ consists of the class of $0$ modulo $P^{\lceil s/2\rceil}=P^{s-\lfloor s/2\rfloor}$,
which implies again the lemma.
\end{proof}

In even characteristic, we will use the following lemma:

\begin{lem}\label{cardaps2}
Suppose that $q$ is even.
Let $\delta$ and $\mu$ be two elements in $A$ with $\delta\not=0 $, let $s\ge 1$ be an integer, and let $P\in A$ be an irreducible polynomial.
Set $\nu=v_P(\delta)$ and $\displaystyle A_{P,s}=\{b\in A\mid b^2 +\delta b \equiv \mu \pmod{P^s}\}$. Then
the set $\displaystyle A_{P,s}$ is contained in at most $2$ residue classes modulo
$\displaystyle P^{s-\min\{\nu,\lfloor s/2\rfloor\}}$.
\end{lem}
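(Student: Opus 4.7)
The plan is to adapt the strategy of Lemma~\ref{cardaps}, using the Artin--Schreier shape $X^2 + \delta X - \mu$ in place of the quadratic $X^2 - D$. If $A_{P,s}$ is empty the statement is vacuous, so fix some $b_0 \in A_{P,s}$ and, for any $b \in A_{P,s}$, set $c = b + b_0$ (which equals $b - b_0$ in characteristic two). Using $(b+b_0)^2 = b^2 + b_0^2$, a direct computation gives
$$
c(c+\delta) = c^2 + \delta c \equiv 0 \pmod{P^s}.
$$
The problem thus reduces to showing that every $c \in A$ satisfying this congruence lies in one of the two residue classes of $0$ or $\delta$ modulo $P^m$, with $m := s - \min\{\nu, \lfloor s/2 \rfloor\}$; this will place $A_{P,s}$ inside the two classes of $b_0$ and $b_0 + \delta$ modulo $P^m$, as required.

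I would prove the reduced claim by a case analysis on $\alpha := v_P(c)$, compared with $\nu$. If $\alpha < \nu$, then $v_P(c+\delta) = \alpha$ and the congruence forces $2\alpha \geq s$, hence $\alpha \geq \lceil s/2 \rceil$; but $\alpha < \nu$ is then compatible only with $\nu > \lceil s/2 \rceil$, in which case $m = \lceil s/2 \rceil$ and $c \equiv 0 \pmod{P^m}$. If $\alpha > \nu$, then $v_P(c+\delta) = \nu$, so $\alpha \geq s - \nu$, and one checks in both subcases $\nu \leq \lfloor s/2 \rfloor$ and $\nu > \lfloor s/2 \rfloor$ that $\alpha \geq m$, so again $c \equiv 0 \pmod{P^m}$.

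The most delicate case is $\alpha = \nu$. Writing $c = P^\nu c_1$ and $\delta = P^\nu \delta_1$ with $P \nmid c_1\delta_1$, the congruence becomes $c_1 + \delta_1 \equiv 0 \pmod{P^{\max(0,\,s-2\nu)}}$. If $\nu > \lfloor s/2 \rfloor$ this is vacuous, but then $\nu \geq \lceil s/2 \rceil = m$, so $c \equiv 0 \pmod{P^m}$. If $\nu \leq \lfloor s/2 \rfloor$, then $m = s - \nu$ and, using $-\delta_1 = \delta_1$ in characteristic two, the condition rewrites as $c_1 \equiv \delta_1 \pmod{P^{s-2\nu}}$, i.e., $c \equiv \delta \pmod{P^m}$. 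Collecting all cases yields the claim.

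The only genuinely non-mechanical step is the Artin--Schreier reduction $c(c+\delta) \equiv 0 \pmod{P^s}$; once that is in place, the rest is elementary valuation bookkeeping, and in particular no analogue of the count of square roots in $(A/P^sA)^\times$ used in Lemma~\ref{cardaps} is required, since in characteristic two $c^2 \equiv 0 \pmod{P^s}$ has the single family of solutions $c \equiv 0 \pmod{P^{\lceil s/2 \rceil}}$. I expect the subcase $\alpha = \nu$, where both the size of $\nu$ relative to $s/2$ and the vanishing or non-vanishing of $s-2\nu$ intervene, to be the only spot that requires a modicum of care.
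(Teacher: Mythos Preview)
Your proof is correct and follows essentially the same route as the paper: both fix $b_0\in A_{P,s}$, reduce to the factorization $(b-b_0)(b-b_0+\delta)\equiv 0\pmod{P^s}$, and finish by comparing $v_P(b-b_0)$ with $\nu$ and $s/2$. The only difference is organizational---the paper splits first on whether $\nu\le s/2$ or $\nu>s/2$ and then on $v_P(b-b_0)$, whereas you split first on $\alpha=v_P(c)$ versus $\nu$---but the valuation bookkeeping and the two resulting residue classes $b_0,\,b_0+\delta$ modulo $P^{s-\min\{\nu,\lfloor s/2\rfloor\}}$ are identical.
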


\begin{proof}
Assume that $\displaystyle A_{P,s}\not=\emptyset$, and fix $b_0\in \displaystyle A_{P,s}$. Let $b\in A$ be another
element of $\displaystyle A_{P,s}$. Then we have:
\begin{equation}\label{congruence-mod-P-s}
b^2-b_0^2+\delta (b-b_0)=(b-b_0)(b-b_0+\delta)\equiv 0 \pmod{P^s}.    
\end{equation}
Suppose first that $\nu\leq s/2$. If $v_P(b-b_0)\le \nu$ then (\ref{congruence-mod-P-s}) gives $P^{s-\nu}\mid b-b_0+\delta$. Hence
$$
b\equiv b_0-\delta \pmod{P^{s-\nu}}
$$
and $b$ belongs to the residue class of $b_0-\delta$ modulo $\displaystyle P^{s-\nu}$.
If $v_P(b-b_0) \ge \nu+1$, then $v_P(b-b_0+\delta)=\nu$ and (\ref{congruence-mod-P-s}) yields
$$
b\equiv b_0 \pmod{P^{s-\nu}}.
$$
Thus $b$ belongs to the residue class of $b_0$ modulo $\displaystyle P^{s-\nu}$ in that case.

Suppose now that $\nu > s/2$. If $s$ is even then clearly (\ref{congruence-mod-P-s}) implies
$$
P^{s/2}\mid b-b_0\qquad {\rm or}\qquad P^{s/2}\mid b-b_0+\delta.
$$
If $s$ is odd, write $s=2k+1$ (with $k\in\BN$). Then we derive from (\ref{congruence-mod-P-s}):
$$
P^{k+1}\mid b-b_0\qquad {\rm or}\qquad P^{k+1}\mid b-b_0+\delta.
$$
Thus in both cases ($s$ even or odd) we obtain
$$
b\equiv b_0 \pmod{P^{\lceil s/2\rceil}} \qquad {\rm or}\qquad b\equiv b_0-\delta  \pmod{P^{\lceil s/2\rceil}}.
$$
Noticing now that $\lceil s/2\rceil = s-\min\{\nu,\lfloor s/2\rfloor\}$ since $\nu > s/2$, we find that
$b$ belongs to the residue classes of $b_0$ or $b_0-\delta$ modulo $\displaystyle P^{s-\min\{\nu,\lfloor s/2\rfloor\}}$.
\end{proof}

\begin{rem}
As the proof shows, in Lemma~\ref{cardaps} the set $\displaystyle A_{P,s}$ (when not empty) consists of the union of one or two residue
classes. However, in Lemma~\ref{cardaps2}, the set $\displaystyle A_{P,s}$ is only \emph{contained} in some residue classes:
one can show that it does not consist of the union of residue classes in general.
\end{rem}

If $a$ and $b$ are two non zero elements of $A$, we denote in the sequel by $\gcd_2(a,b)$ the polynomial $d\in A_+$ of maximal degree such that $d^2\mid a$ and $d^2\mid b$.

\begin{lem}\label{bmoda}
Let us denote by $\omega(a)$ the number of irreducible polynomials $P\in A_+$ such that $P\mid a$.
\begin{enumerate}
 \item\label{bmoda-1}
Suppose that $q$ is odd. Let $a$ and $D$ be two non zero elements in $A$.
Then the set $\{b\in A\mid b^2\equiv D \pmod{a}\}$ consists of at most $2^{\omega(a)}$ residue classes modulo
$a/\gcd_2(a,D)$.
 \item\label{bmoda-2}
Suppose that $q$ is even. Let $a$, $\delta$ and $\mu$ be elements in $A$ with $\delta\not=0$ and $a\not=0$.
Then the set $\{b\in A\mid b^2+\delta b \equiv \mu \pmod{a}\}$ is contained in at most $2^{\omega(a)}$ residue classes modulo
$a/\gcd_2(a,\delta^2)$.
\end{enumerate}
\end{lem}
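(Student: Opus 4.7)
The plan is to reduce the global congruences modulo $a$ to local congruences modulo each prime power dividing $a$ via the Chinese Remainder Theorem, and then apply Lemmas \ref{cardaps} and \ref{cardaps2} prime by prime. The two statements will then follow by a simple bookkeeping computation of $\gcd_2$.

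First I would write $\displaystyle a=\prod_{P\mid a} P^{s_P}$ over the (exactly $\omega(a)$) monic irreducible divisors of $a$. By CRT, the congruence $b^2\equiv D\pmod{a}$ (resp.\ $b^2+\delta b\equiv\mu\pmod{a}$) is equivalent to the system of local congruences $b^2\equiv D\pmod{P^{s_P}}$ (resp.\ $b^2+\delta b\equiv\mu\pmod{P^{s_P}}$) as $P$ ranges over the prime divisors of $a$. Lemma \ref{cardaps} (odd case) and Lemma \ref{cardaps2} (even case) tell us that for each such $P$ the local solution set consists of (resp.\ is contained in) at most $2$ residue classes modulo $P^{s_P-\varepsilon_P}$, where
\[
\varepsilon_P=\min\!\Bigl\{\bigl\lfloor \tfrac{s_P}{2}\bigr\rfloor,\bigl\lfloor \tfrac{v_P(D)}{2}\bigr\rfloor\Bigr\}\quad\text{(odd case)},\qquad \varepsilon_P=\min\!\Bigl\{\bigl\lfloor \tfrac{s_P}{2}\bigr\rfloor,v_P(\delta)\Bigr\}\quad\text{(even case)}.
\]
Applying CRT in the reverse direction to glue these local classes, the global solution set is (resp.\ is contained in) at most $\prod_P 2=2^{\omega(a)}$ residue classes modulo $\displaystyle\prod_P P^{s_P-\varepsilon_P}$.

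It then remains to identify this modulus with $a/\gcd_2(a,D)$ in the odd case and with $a/\gcd_2(a,\delta^2)$ in the even case. Unwinding the definition of $\gcd_2$, one has $v_P(\gcd_2(a,D))=\min\{\lfloor s_P/2\rfloor,\lfloor v_P(D)/2\rfloor\}$, matching the odd case exactly; and in characteristic $2$, since $v_P(\delta^2)=2v_P(\delta)$ is automatically even, $v_P(\gcd_2(a,\delta^2))=\min\{\lfloor s_P/2\rfloor,v_P(\delta)\}$, matching the even case. In both cases $v_P(a/\gcd_2(a,\cdot))=s_P-\varepsilon_P$ as required.

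I do not expect any genuine obstacle: the argument is a direct CRT reduction to Lemmas \ref{cardaps} and \ref{cardaps2}, with the only subtlety being to check that the auxiliary quantity $\gcd_2$ is calibrated to absorb precisely the loss in modulus appearing in the local statements. The distinction between \emph{consists of} (part \ref{bmoda-1}) and \emph{is contained in} (part \ref{bmoda-2}) is preserved by the gluing step, since CRT is a bijection of residue classes.
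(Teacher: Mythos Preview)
Your proof is correct and follows essentially the same route as the paper: reduce to prime powers via CRT, invoke Lemmas~\ref{cardaps} and~\ref{cardaps2} locally, and then identify the resulting modulus $\prod_P P^{s_P-\varepsilon_P}$ with $a/\gcd_2(a,D)$ (resp.\ $a/\gcd_2(a,\delta^2)$) by a direct valuation computation. Your remark that the ``consists of'' versus ``is contained in'' distinction is preserved under the CRT bijection is exactly the point the paper makes as well.
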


\begin{proof}
Let us first prove (1). The lemma is clear if $\deg a=0$ (then $\omega(a)=0$), so we assume $\deg a\ge 1$ in the sequel.
Write $a=\displaystyle\prod_{1\le i\le r}P_i^{s_i}$, where the $P_i$'s are pairwise distinct and irreducible. With the notation of Lemma~\ref{cardaps} we have:
$$
\{b\in A\mid b^2\equiv D \pmod{a}\}=\bigcap_{1\le i\le r} \{b\in A\mid b^2\equiv D \pmod{P_i^{s_i}}\}=\bigcap_{1\le i\le r}A_{P_i,s_i}.
$$
It is easy to see that
$\displaystyle \gcd\nolimits_2(P_i^{s_i},D)=P_i^{\min\{\lfloor \nu_i/2\rfloor,\lfloor s_i/2\rfloor\}}$, where
$\nu_i=v_{P_i}(D)$. Thus, by Lemma~\ref{cardaps}, for each $i$ the set $\displaystyle A_{P_i,s_i}$
consists of at most $2$ residue classes modulo $\displaystyle P^{s_i}/\gcd\nolimits_2(P_i^{s_i},D)$.
Now, the intersection of classes modulo $\displaystyle P^{s_1}/\gcd\nolimits_2(P_1^{s_1},D),\ldots,P^{s_r}/\gcd\nolimits_2(P_r^{s_r},D)$ is a class modulo
$$
\prod_{1\le i\le r}P^{s_i}/\gcd\nolimits_2(P_i^{s_i},D) = a/\gcd\nolimits_2(a,D).
$$
It follows that the set $\{b\in A\mid b^2\equiv D \pmod{a}\}$ consists of at most $2^r$ residue classes modulo $a/\gcd_2(a,D)$.

Now, to show that the assertion (2) holds, it suffices to note that if we set $\nu'=v_P(\delta^2)$ in Lemma~\ref{cardaps2},
then we get exactly the same conclusion as in Lemma \ref{cardaps}, namely:
the set $\displaystyle A_{P,s}$ is contained in at most $2$ residue classes modulo
$\displaystyle P^{s-\min\{\lfloor\nu'/2\rfloor,\lfloor s/2\rfloor\}}$. Hence the same proof as before works in this case. 
\end{proof}

We deduce from Lemma~\ref{bmoda} the following corollary that will be crucial in Section \ref{upperbound}:

\begin{cor}\label{b-mod-a}
The following upper bounds hold.
\begin{enumerate}
\item Suppose that $q$ is odd.
Let $a$ and $D$ be two non zero elements in $A$, and let $\varepsilon$ be a real number such that $0<\varepsilon\le 1$.
Then
$$
\Card\{b\in A\mid b^2\equiv D \pmod{a},\ \vert b\vert<\epsilon\vert a \vert\} \le 2^{\omega(a)} \max\{1, q\varepsilon\vert \gcd\nolimits_2(a,D)\vert\}.
$$
\item Suppose that $q$ is even. Let $a$, $\delta$ and $\mu$ be elements in $A$ with $\delta\not=0$ and $a\not=0$.
let $\beta$ be an element in $k_{\infty}$, and let $\varepsilon$ be a real number such that $0<\varepsilon\le 1$. Then
$$
\Card\{b\in A\mid b^2+\delta b\equiv \mu \pmod{a},\ \vert b+\beta\delta\vert<\epsilon\vert a \vert\} \le 2^{\omega(a)} \max\{1, q\varepsilon\vert \gcd\nolimits_2(a,\delta^2)\vert\}.
$$
\end{enumerate}
\end{cor}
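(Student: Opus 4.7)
The plan is to combine Lemma~\ref{bmoda} with an elementary count of $A$-points in balls of the completion $k_\infty$.

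First, Lemma~\ref{bmoda} already carries the arithmetic content: in part~(1) the set of $b \in A$ with $b^2 \equiv D \pmod{a}$ lies in a union of at most $2^{\omega(a)}$ residue classes modulo $a' := a/\gcd_2(a,D)$, and in part~(2) the set of $b \in A$ with $b^2 + \delta b \equiv \mu \pmod{a}$ is contained in at most $2^{\omega(a)}$ classes modulo $a'' := a/\gcd_2(a,\delta^2)$. It therefore suffices to fix one such residue class, bound the number of its elements satisfying the prescribed size constraint, and then multiply by $2^{\omega(a)}$.

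Next, parameterising the class as $b = b_0 + a't$ (respectively $b_0 + a''t$) with $t \in A$, the constraint $\vert b\vert < \varepsilon\vert a\vert$ (respectively $\vert b+\beta\delta\vert < \varepsilon\vert a\vert$) becomes, after dividing by $\vert a'\vert$ (respectively $\vert a''\vert$), a condition of the form $\vert t - t_0\vert < R$ for some fixed $t_0 \in k_\infty$ and $R = \varepsilon\vert\gcd_2(a,D)\vert$ (respectively $R = \varepsilon\vert\gcd_2(a,\delta^2)\vert$). A direct inspection of $1/T$-expansions shows that the number of $t \in A$ lying in such a ball is at most $1$ when $R \leq 1$ (since any two distinct elements of $A$ are at distance at least $1$) and at most $qR$ when $R > 1$. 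This yields the per-class bound $\max\{1, qR\}$; multiplying by $2^{\omega(a)}$ gives the two inequalities of the corollary.

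The only delicate point is that in part~(2) the centre $t_0$ involves $\beta \in k_\infty$ and a priori need not lie in $k$; however the counting argument above depends only on the radius $R$ and not on the position of the centre inside $k_\infty$, so this causes no trouble. I do not expect a serious obstacle beyond the bookkeeping already handled in Lemma~\ref{bmoda}.
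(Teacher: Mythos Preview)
Your proof is correct and follows the same approach as the paper: invoke Lemma~\ref{bmoda} to reduce to residue classes modulo $a/\gcd_2(a,\cdot)$, then count $A$-points in an ultrametric ball of the appropriate radius. The only difference is cosmetic: for part~(2) the paper first writes $\beta\delta = b_1 + \zeta$ with $b_1\in A$, $\vert\zeta\vert<1$ and substitutes $b' = b+b_1$ so as to apply its Lemma~\ref{easycounting} (which is stated with a polynomial centre), whereas you observe directly that the count of $A$-points in $\{\,x\in k_\infty : \vert x-t_0\vert<R\,\}$ depends only on $R$ and not on $t_0\in k_\infty$, thereby absorbing the shift into the counting lemma itself.
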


The proof of Corollary~\ref{b-mod-a} rests on the following easy counting lemma:

\begin{lem}\label{easycounting}
Let $m\in A$ be a non zero element, let $b_0\in A$, and let $M>0$ be a real number.
Then
\begin{equation*}
    \Card \{b\in A\mid b\equiv b_0\pmod{m},\, \vert b\vert < M\} \le \max\left\{1,\frac{qM}{\vert m\vert}\right\}.
\end{equation*}
\end{lem}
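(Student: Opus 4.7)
The plan is, assuming the set is non-empty, to fix one reference element $b_1$ in it and to parametrize the others through the coset relation $b - b_1 = m c$ with $c \in A$. For any $b$ in the set, the ultrametric inequality gives
\[
|b - b_1| \leq \max\{|b|, |b_1|\} < M,
\]
so $|c| = |b - b_1|/|m| < M/|m|$. This yields an injection of the set into $\{c \in A : |c| < M/|m|\}$, and the problem reduces to bounding the cardinality of the latter ball.

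This is an elementary count. If $M/|m| \leq 1$ only $c = 0$ qualifies, contributing $1$. Otherwise the condition $|c| < M/|m|$ selects polynomials of degree strictly less than $\log_q(M/|m|)$; writing $n$ for the largest integer with $q^n < M/|m|$, the count is $q^{n+1}$, and $q^n < M/|m|$ gives $q^{n+1} < qM/|m|$. In either case the count is $\leq \max\{1, qM/|m|\}$, which is the stated bound.

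I do not anticipate any real obstacle: the argument is really just the observation that a ball of radius $r$ in $A$ (with the $\infty$-adic absolute value) contains at most $\max\{1, qr\}$ elements, and the coset structure transports this bound to any congruence class.
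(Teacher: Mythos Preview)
Your proof is correct and follows essentially the same approach as the paper: parametrize the congruence class by $b = b_1 + mc$ and bound the number of admissible $c$ via the elementary ball count in $A$. The only cosmetic difference is that the paper first normalizes the representative $b_0$ to satisfy $\vert b_0\vert < \vert m\vert$, whereas you pick $b_1$ directly inside the set; your use of the ultrametric inequality $\vert b-b_1\vert \le \max\{\vert b\vert,\vert b_1\vert\} < M$ makes that normalization unnecessary.
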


\begin{proof}
Without loss of generality we may assume that $\vert b_0\vert < \vert m\vert$.
Then we have
\begin{equation}\label{boundedclass}
\{b\in A\mid b\equiv b_0\pmod{m},\, \vert b\vert < M\} =\{\ell\in A\mid \vert b_0+\ell m\vert < M\}.
\end{equation}
This set is empty if $\vert b_0\vert \ge M$, and if $\vert b_0\vert < M$ we have
$$
\Card\{\ell\in A\mid \vert b_0+\ell m\vert < M\} =\Card\{\ell\in A\mid \vert \ell m\vert < M\} \le\max\left\{1,\frac{qM}{\vert m\vert}\right\},
$$
since if $C\ge 1$ is a real number, the number of polynomials $\ell\in A$ such that $\vert \ell \vert < C$ is at most
$\displaystyle q^{\lfloor \log_q C\rfloor +1}\le qC$.
Thus in all cases, the number of elements of the set occurring in (\ref{boundedclass}) is at most $\max\{1,qM/\vert m\vert\}$.
\end{proof}

\

\begin{proof}[Proof of Corollary~\ref{b-mod-a}]
Part (1) follows immediately from Lemma~\ref{bmoda} (1) and Lemma~\ref{easycounting} applied with $m=a/\gcd_2(a,D)$ and $M=\varepsilon \vert a \vert$.
In order to prove Part (2), write $\beta\delta=b_1+\zeta$ with $b_1\in A$ and $\vert \zeta\vert <1$.
Then, for every $b\in A$, we have
\begin{equation*}
    \vert b+\beta\delta \vert = \vert b+b_1+\zeta\vert =
    \begin{cases}
    \vert b+b_1\vert & \text{if $b\not=-b_1$}\\
    \vert \zeta\vert & \text{if $b=-b_1$.}
    \end{cases}
\end{equation*}
It follows that if $\varepsilon \vert a \vert \le 1$, the set 
\begin{equation}\label{setc}
    \{b\in A\mid b^2+\delta b\equiv \mu \pmod{a},\ \vert b+\beta\delta\vert<\epsilon\vert a \vert\}
\end{equation}
consists of at most the element $b=-b_1$ and the bound of Corollary~\ref{b-mod-a} (2) holds in this case.
If $\varepsilon \vert a \vert > 1$, the cardinality of the set (\ref{setc}) is equal to
\begin{multline*}
\Card\{b\in A\mid b^2+\delta b\equiv \mu \pmod{a},\ \vert b+b_1\vert<\epsilon\vert a \vert\} \\
= \Card\{b'\in A\mid b'^2+\delta b'\equiv \mu' \pmod{a},\ \vert b'\vert<\epsilon\vert a \vert\},
\end{multline*}
where $\mu'=\mu+b_1^2+\delta b_1$. The result now follows from
Lemma~\ref{bmoda} (2) and Lemma~\ref{easycounting} applied with $m=a/\gcd_2(a,\delta^2)$ and $M=\varepsilon \vert a \vert$.
\end{proof}

\subsection{Analytic estimates}

We will need to bound $\omega(a)$ from above, and for this we will need the following lemma.

\begin{lem}\label{maj-omega}
Let $a\in A$, $a$ be non constant,  and $d(a)=\Card\{d\in A_+\mid d\vert a\}$. If $\deg a \geq 2$, we have
$$
\log_qd(a)\le 15 \frac{\log_q\vert a\vert}{\log_q\log_q \vert a\vert}.
$$
\end{lem}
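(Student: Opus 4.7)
\noindent\textbf{Proof plan for Lemma \ref{maj-omega}.}

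\smallskip

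The plan is to use the standard polynomial analogue of Wigert's divisor bound, splitting primes into ``large'' and ``small'', with a preliminary reduction to handle small values of $n:=\deg a = \log_q|a|$ by a trivial estimate. Write $a=\prod_{i=1}^r P_i^{e_i}$ with the $P_i$ distinct monic irreducibles and $e_i\ge 1$, so that $d(a)=\prod_{i=1}^r(e_i+1)$ and $n=\sum_i e_i\deg P_i$. The elementary fact used throughout is that $e+1\le q^e$ for every integer $e\ge 1$ and every $q\ge 2$, i.e.\ $\log_q(e+1)\le e$.

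\smallskip

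\emph{Small-$n$ regime.} If $n\le q^{15}$, the fact just recalled yields
\[
d(a)=\prod_{i}(e_i+1)\le q^{\sum_i e_i}\le q^n,
\]
so $\log_q d(a)\le n$. Since $\log_q n\le 15$ in this range, we obtain $\log_q d(a)\le n \le 15\,n/\log_q n$, as wanted.

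\smallskip

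\emph{Large-$n$ regime.} If $n>q^{15}$, fix a threshold $y\ge 1$ to be chosen below and split:
\begin{itemize}
\item For $P_i$ with $\deg P_i>y$: $\log_q(e_i+1)\le e_i<e_i\deg P_i/y$, hence this part contributes at most $n/y$.
\item For $P_i$ with $\deg P_i\le y$: the number of monic irreducibles of degree $\le y$ is at most $\sum_{k=1}^y q^k/k\le 2q^y$ (valid for $q\ge 2$), and each contributes at most $\log_q(e_i+1)\le\log_q(n+1)$; this part is $\le 2q^y\log_q(n+1)$.
\end{itemize}
Thus
\[
\log_q d(a)\le \frac{n}{y}+2q^y\log_q(n+1).
\]
Choose $y=\lfloor \log_q n-2\log_q\log_q n\rfloor$, which forces $q^y\le n/(\log_q n)^2$. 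The second term is then $\le 2n\log_q(n+1)/(\log_q n)^2\le Cn/\log_q n$. Since $n>q^{15}$ implies $\log_q n>15$, the correction $2\log_q\log_q n$ is much smaller than $\log_q n$, so $y\ge\tfrac12\log_q n$, and the first term is $\le 2n/\log_q n$. Tracking the multiplicative constants gives the claimed bound with the loose coefficient $15$.

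\smallskip

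\emph{Main obstacle.} There is no conceptual difficulty; the whole content is the Wigert-type balancing. The only care required is numerical: keeping track of constants so that the final factor $15$ falls out cleanly (this is why splitting off the range $n\le q^{15}$, where the trivial bound $\log_q d(a)\le n$ suffices, is convenient—it removes all ``small $n$'' annoyances from the balancing step).
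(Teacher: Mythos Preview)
Your argument is the same Wigert-type splitting as the paper's: threshold primes by size, bound the large-prime contribution by $n/y$ and the small-prime contribution by (number of small primes)$\times\log_q(n{+}1)$, then optimize---the paper's threshold $t$ on $|P|$ is exactly your $q^y$, and it handles the small-$|a|$ range with $t=2$ where you use the cleaner trivial bound $\log_q d(a)\le n$. One minor slip: for $q=2$ and $n$ just above $2^{15}$ the claim $y\ge\tfrac12\log_q n$ fails (since $2\log_2\log_2 n\approx 8$ is about half of $\log_2 n\approx 15$, not ``much smaller''), but this is harmless---the weaker bound $y\ge\tfrac{1}{13}\log_q n$ does hold throughout your large-$n$ range and already suffices for the constant $15$.
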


\begin{proof}

We follow the proof of the analogous result in characteristic zero
given in \cite[Chap. I.5, Th\'eor\`eme 2, p. 84]{Ten}. In the sequel, the letter $P$ will stand for a monic irreducible polynomial in $A$.
We will use the classical notation $P^{\nu}\mid\mid a$ to mean that $P^{\nu}\mid a$ but $P^{\nu+1}\nmid a$.

Let $t\in \BR$ such that $1< t\le\vert a\vert$. Write $\displaystyle a=\lambda\prod_{P^{\nu}\mid\mid a}P^{\nu}$,
where $\lambda\in\BF_q^{\times}$ and where the product is taken over all pairs $(P,\nu)$ with $\nu\in\BN$ and $P$ a monic irreducible polynomial such that $P^{\nu}\mid\mid a$. Then, using the rough estimate $\nu+1\le 2^{\nu}$,
$$
d(a)=\prod_{P^{\nu}\mid\mid a}(\nu+1)\le \prod_{\substack{P^{\nu}\mid\mid a\\ \vert P\vert\le t}}(\nu+1)\, 
\prod_{\substack{P^{\nu}\mid\mid a\\ \vert P\vert> t}} 2^{\nu}
\le \prod_{\substack{P^{\nu}\mid\mid a\\ \vert P\vert\le t}}(1+\deg a)\,  \prod_{\substack{P^{\nu}\mid\mid a\\ \vert P\vert> t}} 2^{\nu}.
$$
Now,
\begin{align*}
\Card\{P\mid P^{\nu}\mid\mid a, \vert P\vert\le t\} & \le \Card\{b\in A_+\mid q\le \vert b\vert\le t\} \\
& = \sum_{1\le i\le  \log_q t}\Card \{b\in A_+ \mid \deg b=i\} \\
& = \sum_{1\le i \le \log_q t}q^i \le \frac{q}{q-1}t \le 2t.
\end{align*}
On the other hand, for $\vert P\vert>t$ and $\nu\ge 1$ we have $\displaystyle 2^{\nu}\le (\vert P\vert^{\nu})^{\ln 2/\ln t}$. Hence
$$
d(a) \le \bigl(1+\deg a\bigr)^{2t}\; \big\vert\prod_{P^{\nu}\mid\mid a}P^{\nu}\big\vert^{\ln 2/\ln t} = \bigl(1+\log_q\vert a\vert \bigr)^{2t} \vert a \vert ^{\ln 2/\ln t}.\\
$$
Using the inequality $\log_q(1+x)\le \displaystyle\frac{\ln 3}{\ln 2}\log_q x$ for $x\ge 2$, we deduce from the above inequality that we have, if $\vert a\vert \ge q^2$:
\begin{equation}\label{choose-t}
\log_q d(a) \le  \frac{2\ln 3}{\ln 2}t \log_q\log_q\vert a\vert+ \frac{\ln 2}{\ln t}\log_q\vert a\vert.
\end{equation}

Assume first that $\vert a \vert > q^{q^5}$. We choose $\displaystyle t=\frac{\log_q\vert a\vert}{(\log_q\log_q\vert a\vert)^2}$ in equation (\ref{choose-t}). It is easy to see that
\begin{equation}\label{ineg}
x - 2 \log_q(x) \geq x/3 \quad \mbox{\rm for } x\ge 5,
\end{equation}
so $\log_q t\ge x/3$ by (\ref{ineg}) with $x=\log_q\log_q\vert a\vert$. Hence we obtain:
\begin{align*}
\log_q d(a)  & \le \frac{\log_q\vert a\vert}{\log_q\log_q\vert a\vert}\Bigl(\frac{2 \ln 3}{\ln 2}+\frac{\ln 2}{\ln t}\log_q\log_q\vert a\vert\Bigr) \\
& \le   \frac{\log_q\vert a\vert}{\log_q\log_q\vert a\vert}\left(\frac{2 \ln3}{\ln 2}+ 3 \log_q 2\right)
\le 7\, \frac{\log_q\vert a\vert}{\log_q\log_q\vert a\vert}.
\end{align*}

Suppose now that $\vert a \vert\le q^{q^5}$. We choose $t=2$ in equation $(\ref{choose-t})$ and we get $$\log_q d(a) \le \frac{4\ln 3}{\ln 2} \log_q\log_q\vert a\vert+ \log_q\vert a\vert \le \frac{\log_q\vert a\vert}{\log_q\log_q\vert a\vert}\left(\frac{4\ln 3}{\ln 2}\frac{(\log_q\log_q\vert a\vert)^2}{\log_q\vert a\vert}+\log_q\log_q\vert a\vert\right).$$ Since $(\log_q x)^2 \leq 3x/2$ for any $x \geq 2$, this gives $$\log_q d(a) \le \left(\frac{6\ln 3}{\ln 2}+5\right)\frac{\log_q\vert a\vert}{\log_q\log_q\vert a\vert} \le 15 \frac{\log_q\vert a\vert}{\log_q\log_q\vert a\vert} . $$

This proves the lemma.
\end{proof}

\begin{cor}\label{majomega}
Let $a\in A$. If $\deg a \geq 2$, we have
$$
\omega(a)\le \frac{15}{\log_q 2}\, \frac{\log_q\vert a\vert}{\log_q\log_q \vert a\vert}.
$$
\end{cor}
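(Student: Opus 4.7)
The plan is to derive the bound on $\omega(a)$ directly from Lemma~\ref{maj-omega} by comparing $\omega(a)$ to $\log_q d(a)$. The key observation is a purely combinatorial one: if $P_1,\ldots,P_r$ are the distinct monic irreducible divisors of $a$ (where $r=\omega(a)$), then for every subset $S\subseteq\{1,\ldots,r\}$, the product $\prod_{i\in S}P_i$ is a monic divisor of $a$, and different subsets give different divisors. Hence $d(a)\geq 2^{\omega(a)}$.

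Taking $\log_q$ of this inequality yields
$$\omega(a)\log_q 2 \le \log_q d(a).$$
Since $\deg a\ge 2$, we may apply Lemma~\ref{maj-omega}, which gives
$$\log_q d(a) \le 15\,\frac{\log_q\lvert a\rvert}{\log_q\log_q\lvert a\rvert}.$$
Combining the two inequalities and dividing by $\log_q 2$ produces exactly the stated bound
$$\omega(a)\le \frac{15}{\log_q 2}\,\frac{\log_q\lvert a\rvert}{\log_q\log_q\lvert a\rvert}.$$

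There is essentially no obstacle here: the only ingredient beyond Lemma~\ref{maj-omega} is the elementary divisor count $d(a)\ge 2^{\omega(a)}$, which holds in $A=\mathbb{F}_q[T]$ for exactly the same reason as in $\mathbb{Z}$ (unique factorization into monic irreducibles). Note also that the hypothesis $\deg a\ge 2$ ensures that $\log_q\log_q\lvert a\rvert>0$, so the right-hand side is well defined and positive.
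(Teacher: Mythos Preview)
Your proof is correct and follows essentially the same approach as the paper: both establish $d(a)\ge 2^{\omega(a)}$ and then invoke Lemma~\ref{maj-omega}. The only cosmetic difference is that the paper obtains this inequality from the formula $d(a)=\prod_{P^{\nu}\mid\mid a}(\nu+1)\ge 2^{\omega(a)}$, while you use the subset-of-prime-divisors argument; these are equivalent justifications of the same elementary fact.
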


\begin{proof}
Write $\displaystyle a=\lambda\prod_{P^{\nu}\mid\mid a}P^{\nu}$ where $\lambda\in\BF_q^{\times}$. The number of factors in the product is $\omega(a)$, hence
$$
2^{\omega(a)}\le \prod_{P^{\nu}\mid\mid a}(\nu+1)=d(a).
$$
Applying Lemma~\ref{maj-omega} yields the result. \end{proof}

\begin{lem}\label{maj-sigma1}
Let $f\in A$, $f\not=0$. Let $\sigma_1(f):=\displaystyle\sum_{\substack{d\in A_+\\ d\mid f}}\vert d\vert$. Then
$$
\frac{\sigma_1(f)}{\vert f\vert}\le \log_q\vert f\vert +1.
$$
\end{lem}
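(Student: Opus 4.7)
The plan is to exploit the involution on monic divisors $d\mapsto \widetilde{f}/d$ (where $\widetilde{f}$ denotes the monic part of $f$) to turn $\sigma_1(f)/|f|$ into a sum of reciprocals, and then to bound this by counting monic polynomials by degree.

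More precisely, write $f=\lambda \widetilde{f}$ with $\lambda\in\BF_q^\times$ and $\widetilde{f}\in A_+$, so that $|f|=|\widetilde{f}|$ and the monic divisors of $f$ coincide with those of $\widetilde{f}$. The map $d\mapsto \widetilde{f}/d$ is an involution on the set of monic divisors, so
\begin{equation*}
\frac{\sigma_1(f)}{|f|} \;=\; \sum_{\substack{d\in A_+\\ d\mid f}}\frac{|d|}{|\widetilde{f}|} \;=\; \sum_{\substack{d\in A_+\\ d\mid f}}\frac{1}{|\widetilde{f}/d|} \;=\; \sum_{\substack{e\in A_+\\ e\mid f}}\frac{1}{|e|}.
\end{equation*}

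The next step is to group this last sum by degree. Denoting by $N_i$ the number of monic divisors of $f$ of degree $i$, we have $N_i\le q^i$, since $q^i$ is the total number of monic polynomials of degree $i$ in $A$. Therefore
\begin{equation*}
\sum_{\substack{e\in A_+\\ e\mid f}}\frac{1}{|e|} \;=\; \sum_{i=0}^{\deg f}\frac{N_i}{q^i} \;\le\; \sum_{i=0}^{\deg f} 1 \;=\; \deg f + 1 \;=\; \log_q|f|+1,
\end{equation*}
which is the stated inequality.

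There is no real obstacle here: the only ingredients are the divisor-reversing involution and the trivial count of monic polynomials of a given degree. The result is completely analogous to the classical estimate $\sigma_1(n)/n\le \log n + 1$ in $\BZ$, with the logarithm replaced by $\log_q$ because $A_+$ contains $q^i$ polynomials of degree $i$ rather than one integer of ``size'' $i$.
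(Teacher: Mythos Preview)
Your proof is correct and is essentially the same as the paper's: both rewrite $\sigma_1(f)/|f|$ as $\sum_{e\mid f}1/|e|$ via the divisor involution and then bound this sum by $\sum_{\delta=0}^{\deg f}1=\deg f+1$ using that there are at most $q^\delta$ monic polynomials of degree $\delta$. The only cosmetic difference is that the paper bounds $\sum_{e\mid f}1/|e|$ by enlarging the index set to all monic $e$ with $\deg e\le\deg f$, whereas you bound the count $N_i$ of degree-$i$ divisors by $q^i$; these are the same step.
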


\begin{proof}
We have $\sigma_1(f)=\displaystyle\sum_{\substack{d\in A_+\\d\mid f}}\left\vert \frac{f}{d}\right\vert $, so
$$
\frac{\sigma_1(f)}{\vert f\vert}=\sum_{\substack{d\in A_+\\ d\mid f}}\frac{1}{\vert d\vert}\le \sum_{\substack{d\in A_+\\ \deg d\le \deg f}}\frac{1}{\vert d\vert}
=\sum_{0\le \delta\le\deg f}\sum_{\substack{d\in A_+\\ \deg d=\delta}}\frac{1}{q^{\delta}}=\deg f +1.
$$
\end{proof}

\begin{rem}
It is likely that we have $\frac{\sigma_1(f)}{\vert f\vert}\ll \log_q\log_q\vert f\vert$ as in characteristic zero (see \cite[p. 88]{Ten}). However the weaker upper bound from Lemma~\ref{maj-sigma1} is enough for our purpose. 
\end{rem}


\subsection{Upper bound for \texorpdfstring{$h(\alpha)$}{h(alpha)}}\label{upperbound}

The goal of this section is to prove the following proposition and its corollary:
 
\begin{prop}\label{maj-hj}
Let $\alpha$ be a singular modulus such that the place $\infty$ is inert in $K/k$, where
$K$ is the field of complex multiplication of $\alpha$.
Let $\mathcal O$ be the ring of complex multiplication of $\alpha$ and
denote by $D$ its discriminant. 
Suppose that $\vert D\vert\ge q^{4}$. If $\alpha$ is a unit, then
we have, for all real number $\varepsilon$ such that $0<\varepsilon \le 1$:
\begin{equation*}
h(\alpha)\le (q+1)\log_q(\varepsilon^{-1})+\frac{3q(q+1)\varepsilon}{4}\; \frac{\vert D\vert^{1/2}}{h({\mathcal O})}\;
\vert D\vert^{15/(2\log_q\log_q\vert D\vert^{1/2})}\bigl(\log_q\vert D\vert\bigr)^2.
\end{equation*}
\end{prop}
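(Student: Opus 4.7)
The strategy is to bound $h(\alpha)$ from above by combining the unit hypothesis with a counting argument via Corollary~\ref{b-mod-a}. Since $\alpha$ is integral (Lemma~\ref{CM1}) and a unit, $|\alpha|_v=1$ at every finite place of $L:=k(\alpha)$, and the height identity collapses to
$$
[L:k]\,h(\alpha) \;=\; \sum_{\sigma} \log_q^+|\sigma(\alpha)|,
$$
where $\sigma$ runs over the embeddings $L\hookrightarrow\BC_\infty$. Using Lemma~\ref{Ord} (odd $q$) or Lemma~\ref{Ord2} (even $q$) to parameterize conjugates as $\sigma(\alpha)=j(z_\sigma)$ with $z_\sigma\in S_D$ or $S_f(\xi)$, Theorem~\ref{Bro} (resp.~\ref{Bro2}) together with the hypothesis that $\infty$ is inert gives two exhaustive cases: a Case A ($|z_\sigma|>1$) with $|\sigma(\alpha)|=q^{q^{n_\sigma+1}}$, $n_\sigma\ge 1$, and a Case B ($|z_\sigma|=1$) with $|\sigma(\alpha)|=q^q|z_\sigma-e_\sigma|^{q+1}$. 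Since $|z_\sigma-e_\sigma|$ takes values in the unramified value group $q^{\BZ}$, Case B satisfies $|\sigma(\alpha)|\le q^{-1}<1$, so $\log_q^+$ vanishes there. The product formula $\sum_\sigma\log_q|\sigma(\alpha)|=0$ (valid because $\alpha$ is a unit) then yields the key identity
$$
[L:k]\,h(\alpha) \;=\; \sum_{\sigma\in A} q^{n_\sigma+1} \;=\; \sum_{\sigma\in B}\bigl((q+1)s_\sigma-q\bigr)\;\le\;(q+1)\sum_{\sigma\in B} s_\sigma,
$$
where $s_\sigma:=-\log_q|z_\sigma-e_\sigma|\in\BN_{\ge 1}$ and $[L:k]=h({\mathcal O})$ by Lemma~\ref{CM1}.

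Next, I would split the $B$-sum at the threshold $|z_\sigma-e_\sigma|=\varepsilon$. Terms with $|z_\sigma-e_\sigma|\ge\varepsilon$ satisfy $s_\sigma\le\log_q\varepsilon^{-1}$ and number at most $h({\mathcal O})$, producing after division by $[L:k]$ the first summand $(q+1)\log_q\varepsilon^{-1}$. For the remaining terms, the crude upper bound $s_\sigma\le\tfrac12\log_q|D|$ coming from Lemma~\ref{Vince} (resp.~\ref{Vince2}) reduces the problem to estimating
$$
M(\varepsilon):=\#\{\sigma\in B\,:\,|z_\sigma-e_\sigma|<\varepsilon\}.
$$

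The heart of the proof is the estimate of $M(\varepsilon)$. For each $a\in A_+$ with $|a|=|D|^{1/2}$, writing $z=(-b+\sqrt{D})/(2a)$ (odd $q$) or $z=(b+fG\xi)/a$ (even $q$) gives $|z-e|=|b-\tau_a|/|a|$, with $\tau_a\in\BF_{q^2}((1/T))\subset\BC_\infty$ an explicit quantity depending on $a$, $e$, and $\sqrt{D}$ or $\xi$; this identification uses the very computations performed in the proofs of Lemmas~\ref{Vince} and~\ref{Vince2}. Combined with the defining quadratic congruence of $S_D$ (resp.\ $b^2+fGb\equiv f^2\mathrm{rad}(G)B\pmod{a}$ of $S_f(\xi)$), Corollary~\ref{b-mod-a}---extended from ball centers in $k_\infty$ to ball centers in $\BC_\infty$ by a straightforward translation of Lemma~\ref{easycounting}---yields a bound of shape $2^{\omega(a)}\max\{1,q\varepsilon|\gcd\nolimits_2(a,D)|\}$, and its analogue with $\gcd_2(a,(fG)^2)$ in even characteristic. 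Summing over $a$, Corollary~\ref{majomega} gives the uniform estimate $2^{\omega(a)}\le|D|^{15/(2\log_q\log_q|D|^{1/2})}$, while the elementary bound $\sum_{a}|\gcd_2(a,D)|\le|D|^{1/2}(\log_q|D|+1)$ follows by writing $a=d^2a''$ and applying Lemma~\ref{maj-sigma1} to $\sum_{d:\,d^2\mid D}1/|d|$. In the regime $q\varepsilon\log_q|D|\ge 2/3$ these combine to
$$
M(\varepsilon)\;\le\;\tfrac32\, q\varepsilon(\log_q|D|)\,|D|^{1/2}\,|D|^{15/(2\log_q\log_q|D|^{1/2})},
$$
which multiplied by $\tfrac{(q+1)\log_q|D|}{2h({\mathcal O})}$ yields precisely the second term of the announced bound; outside that regime, the first summand $(q+1)\log_q\varepsilon^{-1}$ already dominates the cruder residual.

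The principal technical obstacle is the counting step: extending Corollary~\ref{b-mod-a} to ball centers in $\BC_\infty$ (a translated version of Lemma~\ref{easycounting}), and in characteristic two identifying the correct shift $\beta\delta$ via Lemma~\ref{Vince2} together with Corollary~\ref{b-mod-a}~(2). The hypothesis $|D|\ge q^{4}$, which ensures $\log_q|D|\ge 4$, is used to absorb the various logarithmic slack cleanly and land on the announced coefficient $\tfrac{3q(q+1)}{4}$.
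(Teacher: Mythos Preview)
Your overall strategy matches the paper's: compute $h(\alpha)=h(\alpha^{-1})$ as an archimedean sum over conjugates (your product-formula rewriting is equivalent to working with $\alpha^{-1}$ directly), split at the threshold $|z_\sigma-e|=\varepsilon$, bound the ``far'' part trivially, and for the ``near'' part combine the crude bound $s_\sigma\le\tfrac12\log_q|D|$ with a count $M(\varepsilon)$ of conjugates satisfying $|z_\sigma-e|<\varepsilon$.

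The gap is in your estimate of $M(\varepsilon)$. Writing $|z-e|=|b-\tau_a|/|a|$ with $\tau_a\in\BF_{q^2}((1/T))$ and invoking an ``extended'' Corollary~\ref{b-mod-a} only gives, for each fixed $a$ with $|a|=|D|^{1/2}$, the upper bound $2^{\omega(a)}\max\{1,q\varepsilon|\gcd_2(a,D)|\}$; summing this over \emph{all} such $a$ leaves a term $|D|^{15/(\ldots)}\cdot|D|^{1/2}$ carrying no factor of $\varepsilon$. Your ``outside-the-regime'' claim that $(q+1)\log_q\varepsilon^{-1}$ absorbs this residual is false: for small $\varepsilon$ that residual, after multiplication by $\tfrac{(q+1)\log_q|D|}{2h(\mathcal O)}$, is of order $\log_q|D|$, whereas $(q+1)\log_q\varepsilon^{-1}$ is only of order $\log_q\log_q|D|$.

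What is missing is that $|z-e|<\varepsilon$ constrains $a$ as well as $b$. Because $\tau_a$ has a specific $e$-component in the $\{1,e\}$-basis of $\BF_{q^2}((1/T))$ over $k_\infty$, one has
\[
|b-\tau_a|=\max\bigl\{|b|,\,\bigl|\sqrt{D}/e-2a\bigr|\bigr\}\quad(q\text{ odd}),\qquad
|b-\tau_a|=\max\bigl\{|b+\beta fG|,\,|fG-a|\bigr\}\quad(q\text{ even});
\]
this is the content of Lemmas~\ref{majb} and~\ref{majb-even}. The second component forces $a$ into a ball of radius $\varepsilon|a|$ around a fixed element, so at most $q\varepsilon|D|^{1/2}$ values of $a$ contribute at all. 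With that constraint in hand, the ``$1$'' in $\max\{1,\ldots\}$ costs only $q\varepsilon|D|^{1/2}$ rather than $|D|^{1/2}$, and one obtains $M(\varepsilon)\le 3q\varepsilon|D|^{1/2}|D|^{15/(\ldots)}\log_q|D|^{1/2}$ uniformly for all $\varepsilon\in(0,1]$, with no regime split needed.
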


\


\begin{cor}\label{maj-hjb}
Let $\alpha$ be a singular modulus such that the place $\infty$ is inert in $K/k$, where
$K$ is the field of complex multiplication of $\alpha$.
Let $\mathcal O$ be the ring of complex multiplication of $\alpha$ and
denote by $D$ its discriminant.
Suppose that $\vert D\vert\ge q^4$. If $\alpha$ is a unit, then we have
\begin{equation*}
h(\alpha)\le (q+1) \log_q\Bigl(\frac{\vert D\vert^{1/2}}{h({\mathcal O})}\Bigr) + 10(q+1)\frac{\log_q\vert D\vert}{\log_q\log_q\vert D\vert^{1/2}}.
\end{equation*}
\end{cor}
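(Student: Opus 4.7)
The corollary is a direct consequence of Proposition \ref{maj-hj} by specialising $\varepsilon$. Using the shorthand $U = \vert D\vert^{1/2}/h(\mathcal O)$, $V = \vert D\vert^{15/(2\log_q\log_q\vert D\vert^{1/2})}$, and $W = (\log_q\vert D\vert)^2$, the proposition reads
\[
h(\alpha)\le (q+1)\log_q(\varepsilon^{-1})+\tfrac{3q(q+1)}{4}\,\varepsilon\, UVW.
\]
My plan is to take $\varepsilon = 1/((q+1)UVW)$, which lies in $(0,1]$: the hypothesis $\vert D\vert \ge q^4$ forces $W\ge 16$ and $V$ to be very large, so $(q+1)UVW\ge 1$ comfortably (using the lower bound $U\ge 2(q+1)/(37W)$ implicit in Proposition~\ref{class} if needed).

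Plugging this choice into the proposition collapses the second term to the constant $3q/4$, and the first term splits as $(q+1)\log_q U + (q+1)\log_q V + (q+1)\log_q W + (q+1)\log_q(q+1)$. The term $(q+1)\log_q U$ is exactly the first part of the corollary's bound, so what remains is to establish
\[
(q+1)\bigl(\log_q V+\log_q W+\log_q(q+1)\bigr)+\tfrac{3q}{4}\ \le\ 10(q+1)\,\frac{\log_q\vert D\vert}{\log_q\log_q\vert D\vert^{1/2}}.
\]

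Write $Y=\log_q\vert D\vert/\log_q\log_q\vert D\vert^{1/2}$; since $\log_q V = \tfrac{15}{2}Y$, the inequality above reduces to
\[
\tfrac{5}{2}\,Y\ \ge\ 2\log_q\log_q\vert D\vert + \log_q(q+1)+\tfrac{3q}{4(q+1)}.
\]
Setting $y=\log_q\vert D\vert^{1/2}\ge 2$ and bounding $\log_q(q+1)\le 1+\log_q 2\le 2$ together with $3q/(4(q+1))<1$, this reduces to the purely elementary inequality
\[
\frac{5y}{\log_q y}\ \ge\ 5+2\log_q y\qquad\text{for all }y\ge 2,\ q\ge 2.
\]

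The only real obstacle is verifying this last inequality uniformly in $q$. The inequality is sharpest at $q=2$ (where $\log_q y$ is largest relative to $y$), and one finishes by a direct check of the small integer values $y\in\{2,3,4\}$ combined with the observation that $5y/\log_2 y - 2\log_2 y$ is increasing for $y\ge 4$. All other cases follow since both sides are monotone in $q$ in a way that favours the inequality.
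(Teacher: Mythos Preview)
Your approach is essentially the paper's own: specialise $\varepsilon$ so that the second term in Proposition~\ref{maj-hj} collapses to a constant, then absorb the remaining pieces into $10(q+1)\,\log_q\vert D\vert/\log_q\log_q\vert D\vert^{1/2}$. The paper takes $\varepsilon = 1/(q\,UVW)$ where you take $\varepsilon = 1/((q+1)\,UVW)$; the difference is immaterial.

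There is one small slip in your final verification. The claim that $f(y)=5y/\log_2 y - 2\log_2 y$ is increasing for $y\ge 4$ is not quite true: a direct derivative check gives $f'(4)=10\ln 2-7<0$, and $f$ dips slightly (to about $5.997$ near $y\approx 4.1$) before increasing. The inequality $f(y)\ge 5$ you need is nonetheless correct for all real $y\ge 2$, so the conclusion stands; but your justification needs a minor patch (for instance, check $y=5$ as well and argue increasing for $y\ge 5$, or simply observe that in the inert case $\deg D$ is even so $y=\tfrac{1}{2}\deg D$ is an integer, and then your integer checks together with $f(5)>f(4)$ and monotonicity for $y\ge 5$ suffice). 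The paper sidesteps this by bounding the constant and $\log_q\log_q$ terms separately via the two elementary inequalities $\tfrac{7}{4}\le x/\log_q x$ and $2\log_q(2x)\le 4x/\log_q x$ for $x\ge 2$.
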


\begin{proof} We apply Proposition \ref{maj-hj} with
$$
\varepsilon = q^{-1} \frac{h({\mathcal O})}{\vert D\vert^{1/2}}\;
\vert D\vert^{-15/(2\log_q\log_q\vert D\vert^{1/2})}\bigl(\log_q\vert D\vert\bigr)^{-2}.
$$
By Proposition~\ref{class}, we see that $\displaystyle \varepsilon\leq \frac{37}{2q(q+1)}  \vert D\vert^{-15/(2\log_q\log_q\vert D\vert^{1/2})}$, hence $0<\varepsilon\le 1$ since $\vert D\vert\ge q^4$.
Now, the choice of $\varepsilon$ gives:
\begin{equation*}
    \log_q(\varepsilon^{-1}) = 1 + \log_q\Bigl(\frac{\vert D\vert^{1/2}}{h({\mathcal O})}\Bigr) + \frac{15}{2}\frac{\log_q\vert D\vert}{\log_q\log_q\vert D\vert^{1/2}}
+2\log_q\log_q\vert D\vert.
\end{equation*}
We thus derive the following estimate: 
\begin{equation}\label{hhh}
h(\alpha)\le (q+1) \log_q\Bigl(\frac{\vert D\vert^{1/2}}{h({\mathcal O})}\Bigr)
+ \frac{15}{2}(q+1) \frac{\log_q\vert D\vert}{\log_q\log_q\vert D\vert^{1/2}}
+\frac{7}{4}(q+1) + 2(q+1)\log_q\log_q\vert D\vert.
\end{equation}
We observe now that for $x\ge 2$ we have
\begin{equation*}
    \frac{7}{4}\leq \frac{x}{\log_2x}\leq\frac{x}{\log_qx}
\end{equation*}
and
\begin{equation*}
    2\log_q(2x)\leq 2\log_2(2x)\leq \frac{4x}{\log_2x}\leq \frac{4x}{\log_qx}.
\end{equation*}
Applying these inequalities with $x=\log_q\vert D\vert^{1/2}$, Equation (\ref{hhh}) yields the result.
\end{proof}

The rest of this section is devoted to the proof of Proposition \ref{maj-hj}. We fix a singular modulus $\alpha\in\BC_{\infty}$,
we denote by $K$ its field of complex multiplication and we assume that $\infty$ is inert in $K/k$. We further denote by
$\mathcal O$ the ring of complex multiplication of $\alpha$, by $D$ its discriminant and by $f$ its conductor.
When $q$ is even, $K/k$ is separable and we write $K=k(\xi)$ with $\xi$ satisfying the conditions of the beginning of
Section~\ref{CMcar2}. We will also use the notations $B$, $C$, $G$ and $S_f(\xi)$ of this section.
Finally, we fix a real number $\varepsilon$ such that $0<\varepsilon \le 1$.

Recall that by Lemmas \ref{Ord} (2) and \ref{Ord2} (2), the element $\alpha$ can be written
$\alpha=j(z)$, where $z\in S_D$ if $q$ is odd and $z\in S_f(\xi)$ if $q$ is even.
If $q$ is odd, we introduce the following set:
$$
C_{\varepsilon}(D)= \{z\in S_{D}\mid \exists\, e\in\BF_{q^2}\setminus\BF_q, \vert z-e\vert<\varepsilon\}.
$$
If $q$ is even, we introduce the following set:
$$
C_{\varepsilon}(f,\xi)= \{z\in S_f(\xi)\mid \exists\, e\in\BF_{q^2}\setminus\BF_q, \vert z-e\vert<\varepsilon\}.
$$

We have the following lemma, which is in the same vein as Lemma 2.7 of \cite{BHK}:

\begin{lem}\label{majb}
Assume that $q$ is odd. Let $(a,b,c)\in A^3$ such that
$$a\in A_+,\ \  b^2-4ac=D \ \ \text{and}\ \ \vert b \vert < \vert a\vert\leq \vert c\vert.$$
Put $\displaystyle z=(-b+\sqrt{D})/2a$, and suppose that there exists
$e\in\BF_q^2\setminus\BF_q$ such that $\vert z-e\vert<\varepsilon$. Then we have:
\begin{enumerate}
\item\label{majb-1} $\vert a\vert = \vert D\vert^{1/2}$.
\item\label{majb-2} $\displaystyle\Big\vert\frac{\sqrt{D}}{2e}-a\Big\vert<\varepsilon \vert a \vert$ and $\vert b\vert<\varepsilon \vert a \vert$.
\end{enumerate}
\end{lem}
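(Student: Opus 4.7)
The plan is to first pin down $|a|$ from the relation $b^2 - 4ac = D$ via the ultrametric inequality, and then to extract both bounds of (\ref{majb-2}) simultaneously by decomposing the quantity $\sqrt{D}/(2e) - a - b/(2e)$ according to ``polynomial versus fractional'' parts inside $k_\infty$ and ``$1$ versus $1/(2e)$'' components inside $K_\infty = \BF_{q^2}((1/T))$.

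For (\ref{majb-1}): if $|b|^2 \geq |D|$ then $|a||c| = |4ac| = |b^2 - D| \leq |b|^2 < |a|^2 \leq |a||c|$, a contradiction, so $|b| < |D|^{1/2}$. This gives $|-b + \sqrt{D}| = |\sqrt{D}| = |D|^{1/2}$ and hence $|z| = |D|^{1/2}/|a|$. On the other hand, $|e| = 1$ together with $|z - e| < \varepsilon \leq 1$ forces $|z| = 1$ by the strict ultrametric inequality, and therefore $|a| = |D|^{1/2}$.

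For (\ref{majb-2}), rewrite $z - e = (\sqrt{D} - 2ea - b)/(2a)$, so $|\sqrt{D} - 2ea - b| < \varepsilon |a|$, and dividing by $|2e| = 1$,
$$|\zeta - a - b/(2e)| < \varepsilon |a|, \qquad \text{where } \zeta := \sqrt{D}/(2e).$$
The central observation is that $\zeta$ actually lies in $k_\infty$: indeed $\zeta^2 = D/\sgn(D)$ is a monic polynomial of even degree in $A$ (note $\deg D$ is even since $\infty$ is inert in $K/k$), hence a square in $k_\infty = \BF_q((1/T))$ via the binomial expansion of $\sqrt{\zeta^2}$ around its leading term, and the square roots of $\zeta^2$ existing in $K_\infty$ are already in $k_\infty$. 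One can therefore decompose $\zeta = \zeta^+ + \zeta^-$ with $\zeta^+ \in A$ and $\zeta^- \in (1/T)\BF_q[[1/T]]$.

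Since $\infty$ is inert in $K/k$, the extension $K_\infty/k_\infty$ is unramified with residue extension $\BF_{q^2}/\BF_q$; as $\{1, 1/(2e)\}$ is an $\BF_q$-basis of $\BF_{q^2}$, one gets the norm identity $|u + v \cdot (1/(2e))| = \max(|u|, |v|)$ for every $u, v \in k_\infty$. Applied to
$$\zeta - a - b/(2e) = \bigl((\zeta^+ - a) + \zeta^-\bigr) + (-b) \cdot (1/(2e))$$
together with $|(\zeta^+ - a) + \zeta^-| = \max(|\zeta^+ - a|, |\zeta^-|)$ (polynomial against strictly fractional in $k_\infty$), this gives the three-way bound $\max(|\zeta^+ - a|, |\zeta^-|, |b|) < \varepsilon |a|$. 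In particular $|b| < \varepsilon |a|$ and $|\zeta - a| = \max(|\zeta^+ - a|, |\zeta^-|) < \varepsilon |a|$, which is exactly (\ref{majb-2}). The main obstacle I expect is checking $\zeta \in k_\infty$ and the companion norm identity on $K_\infty$; once those are in hand, everything reduces to a clean unpacking of the ultrametric inequality.
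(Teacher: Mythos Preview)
Your argument is correct and takes the same route as the paper: both decompose along an $\BF_q$-basis $\{1,\lambda\}$ of $\BF_{q^2}$ (the paper uses $\lambda=e$, you use $\lambda=1/(2e)$) and invoke the identity $|x+\lambda y|=\max(|x|,|y|)$ for $x,y\in k_\infty$, $\lambda\in\BF_{q^2}\setminus\BF_q$.

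One step you glossed over, precisely at the point you flagged as the main obstacle: the assertion $\zeta^2=D/\sgn(D)$ amounts to $4e^2=\sgn(D)$, which is not automatic from $e\in\BF_{q^2}\setminus\BF_q$. It follows because $|z-e|<1$ forces $e=\sgn(z)$, and since $|b/a|<1$ and $a\in A_+$ one reads off $\sgn(z)=\sgn(\sqrt{D}/(2a))=\sgn(\sqrt{D})/2$, whence $(2e)^2=\sgn(D)$. The paper does exactly this computation (writing $\sqrt{D}=\mu T^m(1+\delta_1)$ and concluding $\mu/2=e$), and it is also the content of Lemma~\ref{Vince}(1). Once $\zeta\in k_\infty$ is secured, your polynomial/fractional split $\zeta=\zeta^++\zeta^-$ is harmless but unnecessary: the norm identity already gives $\max(|\zeta-a|,|b|)<\varepsilon|a|$ directly.
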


\begin{proof} Let us first prove (\ref{majb-1}). Since $\vert b\vert<\vert a\vert\le \vert c\vert$, we have $\vert b\vert^2 < \vert ac\vert$, hence
$\vert D\vert=\vert b^2-4ac\vert=\vert ac\vert$. It follows that $\vert\sqrt{D}\vert>\vert b\vert$, hence $\vert -b+\sqrt{D}\vert = \vert\sqrt{D}\vert$ and
$\displaystyle\vert z\vert = \frac{\vert\sqrt{D}\vert}{\vert a\vert}$.
Now, since $\vert z-e\vert<\varepsilon$ we have $\vert z-e\vert<1$, hence $\vert z\vert=\vert e\vert=1$. So we obtain $\vert a\vert = \vert D\vert^{1/2}$.

Let us now prove (\ref{majb-2}). 
We first claim that the following property holds: if $x$ and $y$ are two elements of
$\BF_q((T^{-1}))$ and $\lambda\in\BF_{q^2}\setminus\BF_q$, then $\vert x+\lambda y\vert=\max\{\vert x\vert, \vert y\vert\}$.
Indeed, this is clear if $x=y=0$ or if $\vert x\vert\not=\vert y\vert$ by the ultrametric inequality. If now $\vert x\vert=\vert y\vert\not=0$, write
$\displaystyle x=\sum_{i\ge n}x_iT^{-i}$ and $\displaystyle y=\sum_{i\ge n}y_iT^{-i}$, where $(x_i,y_i)\in\BF_q\times\BF_q$ and $-n=\deg x=\deg y$. We have
$x_n+\lambda y_n\not=0$ since $y_n\not=0$ and $\lambda\notin\BF_q$, hence $\vert x+\lambda y\vert=\vert(x_n+\lambda y_n)T^{-n}\vert=\vert T^{-n}\vert=
\vert x\vert = \vert y\vert$. So the claim is proved.

Now, let $m=\deg a = \deg\sqrt{D}$. We have $\sqrt{D} = \mu T^m (1+\delta_1)$ for some elements $\mu\in\BF_{q^2}\setminus\BF_q$
and $\delta_1 \in  \BF_q[[T^{-1}]]$ with $\vert \delta_1\vert<1$. It follows that
$z=-b/(2a)+\sqrt{D}/(2a)=-b/(2a) + \mu/2 (1 + \delta_2)$, where $\delta_2 \in  \BF_q[[T^{-1}]]$
satisfies $\vert \delta_2\vert<1$. Since $\vert z-e\vert <1$, we see that in fact $\mu/2=e$, and
$$
z-e=-\frac{b}{2a}+\left(\frac{\sqrt{D}}{2a}-e\right)\quad \mbox{with}\quad \frac{\sqrt{D}}{2a}-e=e\delta_2\in e\BF_q[[T^{-1}]].
$$
We deduce from this and the claim above that
\begin{equation}\label{ze}
\vert z-e \vert=\max\left\{\left\vert\frac{b}{2a}\right\vert,\left\vert\frac{\sqrt{D}}{2a}-e\right\vert \right\}<\varepsilon,
\end{equation}
from which we derive (\ref{majb-2}).
\end{proof}

In even characteristic, we have the following analogue of Lemma~\ref{majb}:

\begin{lem}\label{majb-even}
Assume that $q$ is even. Let $(a,b,c)\in A^3$ such that
$$a\in A_+,\ \  ac= b^2+bfG+f^2\mathrm{rad}(G)B\ \ \text{and}\ \ \vert b \vert < \vert a\vert \leq \vert c\vert.$$
Put $z=(b+fG\xi)/a$, and suppose that there exists
$e\in\BF_q^2\setminus\BF_q$ such that $\vert z-e\vert<\varepsilon$. Then we have:
\begin{enumerate}
\item $\vert a\vert = \vert fG\vert$.
\item $\xi-e\in k_{\infty}$. 
\item If we set $\beta=\xi-e$, then
$\vert fG-a\vert<\varepsilon \vert a \vert$ and $\vert b+\beta fG\vert<\varepsilon \vert a \vert$.
\end{enumerate}
\end{lem}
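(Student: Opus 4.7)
The proof will follow the same outline as Lemma~\ref{majb}, with $fG\xi$ playing the role of $\sqrt{D}$, but with two genuine novelties dictated by the Artin--Schreier equation $\xi^2+\xi=B/C$ replacing the quadratic equation $(\sqrt{D})^2=D$: (a) one must first establish $|\xi|=1$, and (b) the analogue $\xi-e\in k_\infty$ of the elementary fact $\sqrt{D}/(2a)-e\in e\BF_q[[T^{-1}]]$ requires an actual argument. Throughout, I would rely on the no-cancellation principle proved within Lemma~\ref{majb}: if $x,y\in k_\infty$ and $\lambda\in\BF_{q^2}\setminus\BF_q$, then $|x+\lambda y|=\max(|x|,|y|)$.

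For~(1), the hypothesis $|z-e|<\varepsilon\le 1$ together with $|e|=1$ gives $|z|=1$, hence $|b+fG\xi|=|a|$. We sit in the inert case $\deg B=\deg C$ (the ramified case would make $|\xi|$ a non-integer power of $q$, incompatible with $|z|=1$ and $a\in A$), so the equation $\xi^2+\xi=B/C$ forces $|\xi|=1$. An ultrametric case analysis using $|b|<|a|$ then rules out both $|fG|>|a|$ and $|fG|<|a|$, yielding $|a|=|fG|$.

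For~(2), I would first show $\sgn(\xi)=e$. Since $a$, $f$ and $G$ are monic and $|a|=|fG|$, $|b|<|a|$, one obtains $\sgn(z)=\sgn(fG\xi/a)=\sgn(\xi)$, and then $|z-e|<1$ gives $\sgn(\xi)=e$. Writing $\xi=e+\beta$ with $\beta\in K_\infty$, $|\beta|<1$, the equation $\xi^2+\xi=B/C$ becomes $\beta^2+\beta=B/C-\sgn(B)\in k_\infty$. Expanding $\beta=\sum_{i\ge 1}\beta_iT^{-i}$ with $\beta_i\in\BF_{q^2}$ and equating coefficients with $B/C-\sgn(B)=\sum_{i\ge 1}r_iT^{-i}$ ($r_i\in\BF_q$) gives the recursions $\beta_{2m+1}=r_{2m+1}$ and $\beta_{2m}=r_{2m}+\beta_m^2$. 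A straightforward induction on $i$ then shows $\beta_i\in\BF_q$ for all $i\ge 1$, hence $\beta=\xi-e\in k_\infty$.

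For~(3), with $\beta=\xi-e\in k_\infty$, I compute
\begin{equation*}
z-e=\frac{b+fG\xi-ea}{a}=\frac{(b+fG\beta)+e(fG-a)}{a},
\end{equation*}
where $(b+fG\beta)/a$ and $(fG-a)/a$ both lie in $k_\infty$ and $e\in\BF_{q^2}\setminus\BF_q$. The no-cancellation principle then yields
\begin{equation*}
|z-e|=\max\left\{\frac{|b+fG\beta|}{|a|},\frac{|fG-a|}{|a|}\right\},
\end{equation*}
and the assumption $|z-e|<\varepsilon$ delivers the two inequalities of~(3). The main obstacle is~(2): nothing in the odd-characteristic proof prepares one for the Artin--Schreier descent, and verifying $\xi-e\in k_\infty$ is what allows the final $\BF_{q^2}/\BF_q$ splitting of $z-e$ to go through exactly as in Lemma~\ref{majb}.
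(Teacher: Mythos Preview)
Your proof is correct. Parts~(1) and~(3) match the paper's argument essentially line for line (the paper also computes $a(z-e)=b+\beta fG+e(fG-a)$ and invokes the $\BF_{q^2}/\BF_q$ no-cancellation principle). The genuine difference is in~(2). You argue by direct coefficient comparison: write $\beta=\xi-e=\sum_{i\ge1}\beta_iT^{-i}$, plug into $\beta^2+\beta=B/C-\sgn(B)$, and show by strong induction that each $\beta_i\in\BF_q$. The paper instead uses Galois theory: since $\infty$ is inert, $k_\infty(\xi)=\BF_{q^2}((1/T))$; the nontrivial $\sigma\in\Gal(k_\infty(\xi)/k_\infty)$ sends $\xi\mapsto\xi+1$ (the other root of $X^2+X=B/C$) and $e\mapsto e+1$ (the other root of $X^2+X=\sgn(B)$), hence $\sigma(\xi-e)=\xi-e$ and $\xi-e\in k_\infty$. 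Your approach is more hands-on and self-contained; the paper's is shorter and conceptually cleaner, exploiting that both $\xi$ and $e$ satisfy Artin--Schreier equations whose roots differ by~$1$.
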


\begin{proof}
(1) Since we have assumed that $\infty$ is inert in $K/k$, the element $\xi$ satisfies an equation of the form
\begin{equation*}\label{xi2}
\xi^2+\xi =\frac{B}{C}
\end{equation*}
with $(B, C)\in A^2$ such that $\vert B/C\vert=1$. It easily follows that $\vert \xi \vert=1$. By the inequality $\vert z-e\vert <1$
we also have $\vert z\vert=1$. Since $\vert b/a\vert <1$, we thus obtain
$$
\left\vert\frac{fG}{a}\right\vert=\left\vert\frac{\xi fG}{a}\right\vert=\left\vert z-\frac{b}{a}\right\vert=\vert z \vert=1,
$$
hence we have Part (1).

(2) The element $\xi$ is of degree $2$ over $k_{\infty}$ and its minimal polynomial is
$X^2+X=\frac{B}{C}$, hence its conjugates over $k_{\infty}$ are $\xi$ and $\xi+1$. On the orther hand, the element $e$ is also of degree
$2$ over $k_{\infty}$, and by Lemma~\ref{Vince2} its minimal polynomial over $k_{\infty}$ is $X^2+X=\sgn (B)$, hence its conjugates are $e$ and $e+1$. It follows that if $\sigma$ is the non trivial element of
$\Gal (k_{\infty}(\xi)/k_{\infty})$ (note that $k_{\infty}(\xi)=\BF_{q^2}((1/T))$ since $\infty$ is inert in $K/k$), we have
$$
\sigma (\xi-e)=\sigma(\xi)-\sigma(e)=(\xi+1)-(e+1)=\xi-e.$$
Hence $\xi-e\in k_{\infty}$. 

(3) We have:
$$
a(z-e) = b+\beta fG + e(fG-a).
$$
Since $b+\beta fG\in k_{\infty}$ and $e(fG-a)\in e k_{\infty}$ with $e\notin \BF_q$, we deduce from this
$$
\vert a(z-e) \vert = \max\{\vert b+\beta fG \vert, \vert fG - a\vert \},
$$
hence the result.
\end{proof}

We are now in position to establish the following estimates:

\begin{lem}\label{cardC}
Suppose that $\vert D\vert\ge q^{4}$. Then:
\begin{enumerate}
\item If $q$ is odd, we have
$$
\Card C_{\varepsilon}(D) \le 3q\varepsilon \vert D\vert^{1/2}\vert D\vert^{15/(2\log_q\log_q\vert D\vert^{1/2})}\log_q\vert D\vert^{1/2}.
$$
\item If $q$ is even, we have
$$
\Card C_{\varepsilon}(f,\xi) \le 3q\varepsilon \vert D\vert^{1/2}\vert D\vert^{15/(2\log_q\log_q\vert D\vert^{1/2})}\log_q\vert D\vert^{1/2}.
$$
\end{enumerate}
\end{lem}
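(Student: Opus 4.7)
The plan is to count the elements of $C_\varepsilon(D)$ (and, in even characteristic, $C_\varepsilon(f,\xi)$) by parameterising them with the triples $(a,b,c)\in A^3$ arising from the definition of $S_D$ or $S_f(\xi)$. For each $z\in C_\varepsilon(D)$ the witness $e\in\BF_{q^2}\setminus\BF_q$ satisfying $\vert z-e\vert<\varepsilon$ is unique since $\varepsilon\le 1$, and Lemma~\ref{Vince} (respectively Lemma~\ref{Vince2}) forces $e$ to lie in a set of size at most~$2$. Thus the sum over $e$ contributes only a bounded factor, and we may fix $e$ throughout.

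With $e$ fixed, Lemma~\ref{majb} (respectively Lemma~\ref{majb-even}) imposes $\vert a\vert=\vert D\vert^{1/2}$ together with ball conditions on $a$ and $b$. Using the $\Gal(K_\infty/k_\infty)$ action one checks that $\sqrt{D}/(2e)$ lies in $k_\infty$ (and in even characteristic, that the target of $a$ is $fG\in A$), so a direct counting argument in the spirit of Lemma~\ref{easycounting} bounds the number of admissible monic $a\in A_+$ by $\max\{1,q\varepsilon\vert D\vert^{1/2}\}$. For each such $a$, the remaining condition on $b$ is $b^2\equiv D\pmod{a}$ together with $\vert b\vert<\varepsilon\vert a\vert$ (or the analogue from Lemma~\ref{majb-even} with $\delta=fG$ and $\beta=\xi-e\in k_\infty$), so Corollary~\ref{b-mod-a} bounds the number of admissible $b$ by $2^{\omega(a)}\max\{1,q\varepsilon\vert\gcd\nolimits_2(a,D)\vert\}$. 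The hypothesis $\vert D\vert\ge q^4$ gives $\vert a\vert\ge q^2$, and Corollary~\ref{majomega} uniformly bounds $2^{\omega(a)}$ by $\vert D\vert^{15/(2\log_q\log_q\vert D\vert^{1/2})}$.

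The remaining step is to convert the interaction of the two $\max\{1,\cdot\}$ factors into an expression of shape $q\varepsilon\vert D\vert^{1/2}\log_q\vert D\vert^{1/2}$. Writing $D=f^2 D_K$ with $D_K$ squarefree, the quantity $\gcd\nolimits_2(a,D)$ divides the conductor $f$, so we partition the $a$-sum by $d:=\gcd\nolimits_2(a,D)\mid f$. Writing $a=d^2a'$ reduces the count of $a$'s with a prescribed $d$ to at most $\max\{1,q\varepsilon\vert D\vert^{1/2}/\vert d\vert^2\}$, and expanding $\max\{1,M\}\le 1+M$ in both places leads to a divisor sum
\[
\sum_{d\mid f}\bigl(1+q\varepsilon\vert D\vert^{1/2}/\vert d\vert^2\bigr)\bigl(1+q\varepsilon\vert d\vert\bigr)
\]
which is controlled using Lemma~\ref{maj-sigma1} applied to $\sigma_1(f)$ and to $\sum_{d\mid f}\vert d\vert^{-1}=\sigma_1(f)/\vert f\vert$. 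The cross term $q\varepsilon\vert D\vert^{1/2}\cdot\sigma_1(f)/\vert f\vert$ produces precisely the extra $\log_q\vert D\vert^{1/2}$ factor. The main obstacle is this bookkeeping: a careless use of the bounds introduces spurious factors of $q^2\varepsilon^2$ or $\vert D\vert^{1/4}$ that would destroy the clean final shape, and one must exploit the duality of the two $\max$ factors (one involving $\vert d\vert^{-2}$, the other $\vert d\vert$) so that only one genuine $\varepsilon\vert D\vert^{1/2}$ survives the summation, the divisor sum contributing the $\log$ factor and the absolute constant $3$ being obtained by using $\varepsilon\le 1$ to absorb the various lower-order cross terms.
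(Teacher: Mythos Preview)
Your partition by $d=\gcd\nolimits_2(a,D)$ followed by $\max\{1,M\}\le 1+M$ in both factors has a genuine gap: the displayed sum
\[
\sum_{d\mid f}\bigl(1+q\varepsilon\vert D\vert^{1/2}/\vert d\vert^2\bigr)\bigl(1+q\varepsilon\vert d\vert\bigr)
\]
contains the term $\sum_{d\mid f}1$, the number of monic divisors of $f$, which carries no factor of $\varepsilon$. The target bound is linear in $\varepsilon$ (it must tend to $0$ as $\varepsilon\to 0$), so this term is fatal. Invoking $\varepsilon\le 1$ can turn $\varepsilon^2$ into $\varepsilon$ but never a constant into $\varepsilon$; the ``duality'' you allude to does not explain how this term disappears, and the ``cross term'' you single out actually comes with coefficient $q^2\varepsilon^2$, not $q\varepsilon$.

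The paper avoids this by \emph{not} partitioning by $d$. It retains the sum over admissible $a$ with the ball constraint $\bigl\vert\sqrt{D}/(2e)-a\bigr\vert<\varepsilon\vert a\vert$ from Lemma~\ref{majb}, applies $\max\{1,q\varepsilon\vert\gcd\nolimits_2(a,D)\vert\}\le 1+q\varepsilon\vert\gcd\nolimits_2(a,D)\vert$ once, and bounds the two resulting sums separately. The sum of $1$'s is taken only over $a$ in that ball, hence is itself $\le q\varepsilon\vert D\vert^{1/2}$ by a direct coefficient count: the constraint pins down all but roughly $\log_q(q\varepsilon\vert a\vert)$ coefficients of $a$. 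In the second sum, which already carries the factor $q\varepsilon$, the ball constraint on $a$ is dropped and one evaluates
\[
\sum_{\substack{a\in A_+\\ \vert a\vert=\vert D\vert^{1/2}}}\vert\gcd\nolimits_2(a,D)\vert
\ \le\ \vert D\vert^{1/2}\sum_{\substack{d\in A_+\\ d\mid f}}\frac{1}{\vert d\vert}
\ =\ \vert D\vert^{1/2}\,\frac{\sigma_1(f)}{\vert f\vert}
\ \le\ 2\vert D\vert^{1/2}\log_q\vert D\vert^{1/2}
\]
by reversing the order of summation and Lemma~\ref{maj-sigma1}. Both pieces are thus $O(\varepsilon)$ and combine to $q\varepsilon\vert D\vert^{1/2}\bigl(1+2\log_q\vert D\vert^{1/2}\bigr)\le 3q\varepsilon\vert D\vert^{1/2}\log_q\vert D\vert^{1/2}$ since $\vert D\vert\ge q^4$. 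The even-characteristic argument is entirely parallel, with $fG$ playing the role of $\sqrt{D}/(2e)$.
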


\begin{proof} (1) Suppose that $q$ is odd. By Lemma \ref{majb}(\ref{majb-1}) and (\ref{majb-2}), 
$\Card C_{\varepsilon}(D)$ is less or equal to the number of pairs $(a,b)\in A_+\times A$ such that
$$
\vert a\vert = \vert D\vert^{1/2},\quad b^2\equiv D\!\!\!\pmod{a},\quad \Big\vert\frac{\sqrt{D}}{2e}-a\Big\vert<\varepsilon \vert a \vert
\quad \mbox{and}\quad\vert b\vert<\varepsilon \vert a \vert.
$$
Then Corollaries \ref{b-mod-a} and \ref{majomega} yield
\begin{align}
\Card C_{\varepsilon}(D) & \le \max_{\substack{a\in A_+\\ \vert a\vert = \vert D\vert^{1/2}}}\!\!2^{\omega(a)}\,\,
\Bigl(\!\!\!\sum_{\substack{a\in A_+\\ \vert a\vert = \vert D\vert^{1/2}\\ \vert\frac{\sqrt{D}}{2e}-a\vert<\varepsilon \vert a \vert}}
\max\{1,q\varepsilon \vert \gcd\nolimits_2(a,D)\vert \}\Bigr) \nonumber \\ \label{mmaajj}
& \le \vert D\vert^{15/(2\log_q\log_q\vert D\vert^{1/2})}\, 
\Bigl(\!\!\!\sum_{\substack{a\in A_+\\ \vert a\vert = \vert D\vert^{1/2} \\ \vert\frac{\sqrt{D}}{2e}-a\vert<\varepsilon \vert a \vert}}\!\!\!\!\!\! 1 \
+ \ q\varepsilon\!\!\! \sum_{\substack{a\in A_+\\ \vert a\vert = \vert D\vert^{1/2}}}\vert \gcd\nolimits_2(a,D)\vert\Bigr).
\end{align}
Let us first estimate the first sum in the parentheses. Let $a\in A_+$ be such that $\vert a\vert = \vert D\vert^{1/2}$ and $\vert\frac{\sqrt{D}}{2e}-a\vert<\varepsilon \vert a \vert$.
Recall that we have seen in the proof of Lemma~\ref{majb}(\ref{majb-2}) that $\frac{\sqrt{D}}{2e}\in\BF_q[[T^{-1}]]$.
Let $m=\deg a=\deg\sqrt{D}$, and write $a=\displaystyle\sum_{0\le i\le m}a_iT^{m-i}$ and $\displaystyle\frac{\sqrt{D}}{2e}=\sum_{i\ge 0}\delta_iT^{m-i}$,
where $a_i\in\BF_q$, $\delta_i\in\BF_q$ and $a_0=1$. Since $\vert\frac{\sqrt{D}}{2e}-a\vert<\varepsilon \vert a \vert$, we have $a_i=\delta_i$ for all $i$ such that
$q^{m-i}\ge \varepsilon\vert a\vert$, \ie\ for all $i$ such that $0\le i\le \lfloor\log_q(\varepsilon^{-1})\rfloor$. So the number of elements $a\in A_+$ such that
$\vert a\vert = \vert D\vert^{1/2}$ and $\vert\frac{\sqrt{D}}{2e}-a\vert<\varepsilon \vert a \vert$ is at most
$$
\Card\{i\in\BZ_{\ge 0}\mid \lfloor\log_q(\varepsilon^{-1})\rfloor<i\le m \}=q^{m-\lfloor\log_q(\varepsilon^{-1})\rfloor}\le q^{m+\log_q(\varepsilon)+1}=q\varepsilon\vert a \vert,
$$
hence
\begin{equation}\label{firstsum}
\sum_{\substack{a\in A_+\\ \vert a\vert = \vert D\vert^{1/2}\\ \vert\frac{\sqrt{D}}{2e}-a\vert<\varepsilon \vert a \vert}}\!\!\!\!\!\! 1 \ \le\  q\varepsilon\vert D \vert^{1/2}.
\end{equation}
Let us now estimate the second sum. We have
$$
\sum_{\substack{a\in A_+\\ \vert a\vert = \vert D\vert^{1/2}}}\vert \gcd\nolimits_2(a,D)\vert\le
\sum_{\substack{a\in A_+\\ \vert a\vert = \vert D\vert^{1/2}}}\sum_{\substack{d\in A_+\\ d^2\mid D, d^2\mid a}}\vert d\vert
=\sum_{\substack{d\in A_+\\ d^2\mid D}}\vert d\vert \Card\{a\in A_+\mid \vert a\vert = \vert D\vert^{1/2}, d^2\mid a\},
$$
and for a fixed $d$ such that $d^2\mid D$ we have
$$
\Card\{a\in A_+\mid \vert a\vert = \vert D\vert^{1/2}, d^2\mid a\}=\Card\left\{b\in A_+\mid \vert b\vert = \left\vert\frac{D^{1/2}}{d^2}\right\vert\right\}
=\frac{\vert D\vert^{1/2}}{\vert d\vert^2}.
$$
Thus we obtain:
\begin{equation*}\label{}
\sum_{\substack{a\in A_+\\ \vert a\vert = \vert D\vert^{1/2}}}\vert \gcd\nolimits_2(a,D)\vert\le\vert D\vert^{1/2}
\sum_{\substack{d\in A_+\\ d^2\mid D}}\frac{1}{\vert d\vert}.
\end{equation*}
Write now $D=f^2 D_0$, where $f\in A$ and $D_0\in A$ is squarefree. We have $d^2\mid  D\Longleftrightarrow d\mid f$, so Lemma~\ref{maj-sigma1} gives:
$$
\sum_{\substack{d\in A_+\\ d^2\mid D}}\frac{1}{\vert d\vert}=\sum_{\substack{d\in A_+\\ d\mid f}}\frac{1}{\vert d\vert}
=\frac{1}{\vert f\vert}\sum_{\substack{d\in A_+\\ d\mid f}}\left\vert \frac{f}{d}\right\vert=\frac{1}{\vert f\vert}\sum_{\substack{d\in A_+\\ d\mid f}}\vert d\vert
= \frac{\sigma_1(f)}{\vert f\vert}\le \log_q\vert f\vert +1\le 2\log_q\vert D\vert^{1/2}.$$
It follows that
\begin{equation}\label{secondsum}
\sum_{\substack{a\in A_+\\ \vert a\vert = \vert D\vert^{1/2}}}\vert \gcd\nolimits_2(a,D)\vert\le 2 \vert D\vert^{1/2}\log_q\vert D\vert^{1/2}.
\end{equation}
The inequality (\ref{mmaajj}) together with the estimates  (\ref{firstsum}) and (\ref{secondsum}) yield
$$
\Card C_{\varepsilon}(D) \le q\varepsilon \vert D\vert^{1/2}\vert D\vert^{15/(2\log_q\log_q\vert D\vert^{1/2})}(1+2\log_q\vert D\vert^{1/2}),
$$
from which the lemma follows.

(2) Let us now suppose that $q$ is even. The proof is very similar to that of the previous case.
Recall that if $z\in S_f(\xi)$ is such that there exists $e\in\BF_{q^2}\setminus\BF_q$ with $\vert z-e\vert<\varepsilon$, then
$e$ does not depend on $z$ and is given by $e=\sgn(\xi)$ by Lemma~\ref{Vince2}. Thus, if we set $\beta=\xi-\sgn(\xi)$,
by Lemma \ref{majb-even} we have that
$\Card C_{\varepsilon}(f,\xi)$ is less or equal to the number of pairs $(a,b)\in A_+\times A$ such that
$$
\vert a\vert = \vert fG\vert,\quad b^2+bfG \equiv f^2 \mathrm{rad}(G) B\!\!\!\pmod{a},\quad \vert fG-a\vert<\varepsilon \vert a \vert
\quad \mbox{and}\quad\vert b+\beta fG\vert<\varepsilon \vert a \vert.
$$
Similarly as before, since $D=f^2G^2$ (see Section \ref{CMcar2}), Corollaries \ref{b-mod-a} and \ref{majomega} yield
\begin{equation}\label{mmaajj2}
\Card C_{\varepsilon}(f,\xi) 
\le \vert D\vert^{15/(2\log_q\log_q\vert D\vert^{1/2})}\, 
\Bigl(\!\!\!\sum_{\substack{a\in A_+\\ \vert a\vert = \vert D\vert^{1/2} \\ \vert fG-a\vert<\varepsilon \vert a \vert}}\!\!\!\!\!\! 1 \
+ \ q\varepsilon\!\!\! \sum_{\substack{a\in A_+\\ \vert a\vert = \vert D\vert^{1/2}}}\vert \gcd\nolimits_2(a,D)\vert\Bigr).
\end{equation}
The sums in the parentheses are estimated in the same way as before. We get :

\begin{equation}\label{firstsum2}
\sum_{\substack{a\in A_+\\ \vert a\vert = \vert D\vert^{1/2}\\ \vert fG-a\vert<\varepsilon \vert a \vert}}\!\!\!\!\!\! 1 \ \le\  q\varepsilon\vert D \vert^{1/2}\qquad {\rm and} \qquad
\sum_{\substack{a\in A_+\\ \vert a\vert = \vert D\vert^{1/2}}}\vert \gcd\nolimits_2(a,D)\vert\le 2 \vert D\vert^{1/2}\log_q\vert D\vert^{1/2}.
\end{equation}
The inequalities (\ref{mmaajj2}) and (\ref{firstsum2}) give the result.
\end{proof}



\begin{proof}[Proof of Proposition \ref{maj-hj}]
We follow the strategy used in \cite{BHK} to prove their Theorem 3.1. We handle the cases $q$ odd and $q$ even simultaneously. When $q$ is even, we fix an element $\xi\in\overline{k}$ as in
Section~\ref{CMcar2} and we denote by $f$ the conductor of the ring of complex multiplication corresponding to $\alpha$.
Put 
$m=[k(\alpha):k]$, and denote by $\alpha_1,\ldots,\alpha_m$ the conjugates
of $\alpha$ over $k$. Let $\varepsilon$ be a real number such that $0<\varepsilon\le 1$.
By Lemmas \ref{Ord} and \ref{Ord2}, for all $i$ we can write $\alpha_i=j(z_i)$, where $z_i\in S_D$ if $q$ is odd
and $z_i\in S_f(\xi)$ if $q$ is even.
Suppose now that $\alpha$ is a unit. Then $\alpha^{-1}$ is integral and $\vert \alpha^{-1}\vert_v\le 1$ for every finite place $v\in M_{k(\alpha)}$. Hence we have
\begin{align*}
h(\alpha)  =h(\alpha^{-1}) & =\frac{1}{[k(\alpha):k]}\sum_{v\in M_{k(\alpha)}}n_v \log_q\max\{1,\vert\alpha^{-1}\vert_v\} \\
& =\frac{1}{[k(\alpha):k]}\sum_{v\mid\infty}n_v \log_q\max\{1,\vert\alpha^{-1}\vert_v\}\\
& =\frac{1}{m}\sum_{1\le i\le m}\log_q\max\{1,\vert\alpha_i^{-1}\vert\}\\
& = \frac{1}{m}\sum_{1\le i\le m}\log_q\max\{1,\vert j(z_i)^{-1}\vert\} =  S_1+S_2,
\end{align*}
where (recall that $\mathcal{E} = \mathbb{F}_{q^2} \backslash \mathbb{F}_q$)
\begin{equation*}\label{S1}
S_1=\frac{1}{m}\sum_{\substack{1\le i\le m\\ \min_{e\in{\mathcal E}}\vert z_i-e\vert \ge \varepsilon}}\log_q\max\{1,\vert j(z_i)^{-1}\vert\}
\end{equation*}
and
\begin{equation*}\label{S2}
S_2=\frac{1}{m}\sum_{\substack{1\le i\le m\\ \min_{e\in{\mathcal E}}\vert z_i-e\vert < \varepsilon}}\log_q\max\{1,\vert j(z_i)^{-1}\vert\}.
\end{equation*}
Let us first estimate the sum $S_1$. Let $i$ such that $\displaystyle\min_{e\in{\mathcal E}}\vert z_i-e\vert \ge \varepsilon$.
Since we are in the inert case, by Theorems \ref{Bro} (2) and \ref{Bro2} (2) we have
$\vert j(z_i)\vert\ge q^{q^2}$ if $\vert z_i \vert>1$,
and if $\vert z_i\vert =1$ we have $\vert j(z_i)\vert\ge q^q\varepsilon^{q+1}\ge \varepsilon^{q+1}$.
Hence, in all cases we have:
$$
\log_q\max\{1,\vert j(z_i)^{-1}\vert\}\le (q+1)\log_q(\varepsilon^{-1}).
$$
Since the number of terms in $S_1$ is at most $m$, we get the estimate:
\begin{equation}\label{SS1}
S_1\le (q+1)\log_q(\varepsilon^{-1}).
\end{equation}
Let us now estimate the sum $S_2$. Let $i$ be an index such that there exists $e\in{\mathcal E}$ with $\vert z_i-e\vert < \varepsilon$.
Then clearly $\vert z_i\vert=1$ since $\vert\varepsilon\vert<1$, and Theorems \ref{Bro} (2.b) and \ref{Bro2} (2.ii)
yield $\vert j(z_i)\vert = q^q\vert z_i-e\vert^{q+1}$. Using now Lemmas \ref{Vince} (2) and \ref{Vince2} (2), we get
$\vert j(z_i)\vert\ge q^q\vert D\vert^{-(q+1)/2}$. Hence we derive:
$$
\log_q\max\{1,\vert j(z_i)^{-1}\vert\}\le \frac{q+1}{2}\log_q\vert D\vert.
$$
Using the fact that $m=h({\mathcal O}_{D})$ by Lemma \ref{CM1}, we find the following estimate for $S_2$ if $q$ is odd:
\begin{equation}\label{SS2}
S_2\le \frac{q+1}{2} \, \frac{\Card C_{\varepsilon}(D)}{h({\mathcal O}_{D})} \log_q\vert D\vert
\end{equation}
and similarly if $q$ is even by replacing $C_\varepsilon(D)$ with $C_\varepsilon(f,\xi)$. Using now Lemma \ref{cardC}, the estimates (\ref{SS1}) and (\ref{SS2}) give the proposition.
\end{proof}

\subsection{Lower bound for \texorpdfstring{$h(\alpha)$}{h(alpha)}}\label{lowerbound}

In this section, we obtain two different lower bounds for the height of a singular modulus, given by Proposition~\ref{easybound} and Corollary~\ref{weibound}.

The first lower bound is easy:

\begin{prop}\label{easybound}
Let $D\in A$ be such that $K=k(\sqrt{D})$ is imaginary quadratic, and let $\alpha\in\BC_{\infty}$ be a singular modulus of discriminant $D$. If $\vert D\vert \ge q$, then we have:
$$
h(\alpha)\ge \frac{\vert D\vert^{1/2}}{h({\mathcal O}_{D})}.
$$
\end{prop}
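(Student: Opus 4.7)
The plan is to exhibit a single conjugate of $\alpha$ whose absolute value is so large that, even after dividing by $h(\mathcal{O}_D)$ in the height formula, the contribution still exceeds $|D|^{1/2}/h(\mathcal{O}_D)$. By Lemma~\ref{CM1}(ii) the singular modulus $\alpha$ is integral over $A$, so $|\alpha|_v\le 1$ at every finite place of $k(\alpha)$. Setting $m=[k(\alpha):k]$ and denoting by $\alpha_1,\dots,\alpha_m$ the conjugates of $\alpha$ over $k$, the height reduces to a sum over archimedean embeddings:
\[
h(\alpha) \;=\; \frac{1}{m}\sum_{i=1}^{m}\log_q\max\{1,|\alpha_i|\}.
\]
By Lemma~\ref{CM1}(iii)--(iv) one has $m=h(\mathcal{O}_D)$ (the extension $K/k$ is automatically separable in odd characteristic), and by Lemma~\ref{Ord}(3) the conjugates $\alpha_i$ are exactly the elements of $j(S_D)$.

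The strategy is then to pick one explicit $z\in S_D$ and invoke Brown's Theorem~\ref{Bro}, which evaluates $|j(z)|$ exactly. I would take $z=\sqrt{D}/2$, corresponding to the triple $(a,b,c)=(1,0,-D/4)$ in the definition of $S_D$. All defining conditions are immediate: $a=1\in A_+$, $|b|=0<|a|=1$, $b^2-4ac=D$, $\gcd(a,b,c)=1$, and finally $|a|=1\le |c|=|D|$ since $|4|=1$ in odd characteristic and $|D|\ge q\ge 3$. Consequently $|z|=|D|^{1/2}$, so the integer $n:=\lceil \deg D/2\rceil$ is the smallest integer with $n\ge \log_q|z|$.

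Applying Theorem~\ref{Bro}: when $\deg D$ is odd one reads off $\log_q|j(z)| = \tfrac{q+1}{2}\,q^{(\deg D+1)/2}=\tfrac{(q+1)\sqrt{q}}{2}\,|D|^{1/2}$; when $\deg D$ is even, the hypothesis $|D|\ge q$ forces $\deg D\ge 2$ and hence $n\ge 1$, giving $\log_q|j(z)|=q^{\deg D/2+1}=q\,|D|^{1/2}$. In either parity, $\log_q|j(z)|\ge |D|^{1/2}$. Since all other terms in the sum are nonnegative, the single contribution of $j(z)$ already yields
\[
h(\alpha)\;\ge\; \frac{\log_q|j(z)|}{m}\;\ge\; \frac{|D|^{1/2}}{h(\mathcal{O}_D)}.
\]

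There is no real obstacle here: Brown's theorem is an equality rather than an inequality, so the only choice to make is which $z\in S_D$ to evaluate at, and $\sqrt{D}/2$ is the natural candidate that makes $|z|$ as large as possible among elements of $S_D$ with $a=1$. The exact same argument would go through in characteristic two by using Lemmas~\ref{Ord2}/\ref{Ordins2} in place of Lemma~\ref{Ord} and Theorem~\ref{Bro2}/Proposition~\ref{Broins2} in place of Theorem~\ref{Bro}, with an analogous explicit choice of representative.
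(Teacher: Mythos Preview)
Your proof is correct and follows essentially the same approach as the paper's: both reduce to the archimedean contribution (using integrality of $\alpha$), pick the explicit representative $z=\sqrt{D}/2\in S_D$, and apply Brown's Theorem~\ref{Bro} to bound $\log_q|j(z)|$ from below by $|D|^{1/2}$. Your write-up is in fact more detailed than the paper's: you verify explicitly that $(1,0,-D/4)$ lies in $S_D$, and you compute the exact value of $\log_q|j(z)|$ in each parity case, whereas the paper simply records $\log_q|j(z)|\ge q^n\ge |D|^{1/2}$; for the even-characteristic case the paper specifies the representative $z=fG\xi$ (i.e.\ $z=\xi\sqrt{D}$ with $\sqrt{D}=fG$), which is the ``analogous explicit choice'' you allude to.
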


\begin{proof} We start by $$h(\alpha)=\frac{1}{[K:k]}\sum_{v\in M_{K}}n_v \log_q\max\{1,\vert\alpha\vert_v\}.$$
So, when $q$ is odd we get: 
$$h(\alpha)\geq \frac{1}{h({\mathcal O}_{D})}\log_q \max_{z \in S_D}\{\vert j(z)\vert\} \geq
\frac{1}{h({\mathcal O}_{D})}  \log_q \vert j( \sqrt{D} / 2 )\vert \geq \frac{1}{h({\mathcal O}_{D})} \log_q q^{q^n}
$$
for $n$ the smallest integer such that $n\geq \log_q\vert \sqrt{D}/2\vert$, by Theorem \ref{Bro}.
When $q$ is even, we have similarly, using Theorem \ref{Bro2}:
$$h(\alpha)\geq
\frac{1}{h({\mathcal O}_{D})}  \log_q \vert j( \xi \sqrt{D})\vert \geq \frac{1}{h({\mathcal O}_{D})} \log_q q^{q^n}
$$
for $n$ the smallest integer such that $n\geq \log_q\vert \xi \sqrt{D}\vert \geq \log_q\vert \sqrt{D}\vert$ (recall
that $\vert \xi\vert \geq 1$ by Lemma \ref{Ord2}).
So, in both cases we get the bound of the proposition.
\end{proof}

\
The second lower bound will follow from the following proposition:

\begin{prop}\label{weiboundprop}
Let $K$ be a separable, imaginary quadratic extension of $k$, and let $\alpha\in\BC_{\infty}$ be a singular modulus 
having CM by an order of $\mathcal{O}_K$ of conductor $f$. Then we have: 

\begin{align*}
   h(\alpha)\geq &\quad  \frac{\log_q\vert D_K\vert}{10} \left(\frac{1}{2}-\frac{1}{\sqrt{q}+1}\right) \ln q  -\left[\frac{7q-5}{4q-4}+\frac{8}{\ln q}\right]\frac{\ln q}{5}\\
   &+\frac{1}{10}\log_q\vert f\vert -\frac{4q^2}{5(q-1)^2}\log_q^+ \log_q \vert f\vert,\\
\end{align*}
where we have set $\log_q^{+}(x)=\log_q\max\{1, x\}$ for $x\in\BR$.
\end{prop}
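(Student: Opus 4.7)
The strategy is to bound $h(\alpha)$ from below by going through the stable Taguchi height $\hTag(\Phi)$ of a CM Drinfeld module $\Phi$ with $j(\Phi)=\alpha$. The plan has three stages, corresponding to the three types of quantities that appear on the right-hand side: a main term in $\log_q\vert D_K\vert$, an error term depending only on $q$, and a conductor term involving $\log_q\vert f\vert$ and $\log_q^+\log_q\vert f\vert$.

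Stage~1: Compare the naive $j$-height with the Taguchi height. Using the defining identity $j=g^{q+1}/\Delta$, and the explicit expression (\ref{Tagu}) together with the local component $h_G^v$, one writes an inequality of the form $h(\alpha)\geq c_{q}\,\hTag(\Phi)-C_q$ for explicit constants. At finite places, one controls $h_G^v(\Phi)$ in terms of $h^v(j)$ after replacing $\Phi$ by a minimal model; at the archimedean places one bounds the lattice covolume $D(\Lambda_v)$ in terms of the fundamental-domain representative coming from Lemmas \ref{Ord} and \ref{Ord2}. The explicit rational coefficient $\tfrac{1}{10}$ on the main term will be tracked through this comparison.

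Stage~2: Apply Wei's theorem from \cite{Wei20}, which is the Drinfeld analogue of Colmez's formula: it gives an effective lower bound for $\hTag(\Phi_{\mathcal{O}_K})$ (the Drinfeld module with CM by the full ring of integers) in terms of the logarithmic derivative at~$0$ of the Artin $L$-function attached to the quadratic character of $K/k$. Using effective estimates on $L(\chi_K,s)$ in the spirit of Lemma \ref{class2}, one extracts a bound of the form
$$\hTag(\Phi_{\mathcal{O}_K})\geq \tfrac{1}{2}\Bigl(\tfrac{1}{2}-\tfrac{1}{\sqrt{q}+1}\Bigr)(\ln q)\log_q\vert D_K\vert - E(q),$$
with $E(q)$ collecting the purely $q$-dependent losses. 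This is the source of the $\bigl(\tfrac{1}{2}-\tfrac{1}{\sqrt{q}+1}\bigr)\ln q$ factor.

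Stage~3: Transfer from $\Phi_{\mathcal{O}_K}$ to $\Phi$ with CM order of conductor $f$. The two modules are related by a cyclic isogeny of degree essentially $\vert f\vert$, and the Taguchi height changes in a controlled way:
$$\hTag(\Phi)=\hTag(\Phi_{\mathcal{O}_K})+\tfrac{1}{2}\log_q\vert f\vert+R(f),$$
where $R(f)$ is an error coming from the primes where the isogeny has bad behaviour. Using Corollary \ref{majomega} and Lemma \ref{maj-sigma1} to bound $\omega(f)$ and $\sigma_1(f)/\vert f\vert$, one gets $R(f)=O_q(\log_q^+\log_q\vert f\vert)$, with an explicit constant of the shape $\tfrac{4q^2}{5(q-1)^2}$ after combining with the loss in Stage~1. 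Combining the three stages and collecting constants yields the stated inequality.

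The main obstacle is Stage~1: the comparison between the naive $j$-height and the Taguchi height is not sharp and requires carefully quantifying both the reduction of $\Phi$ at every finite place (where a minimal model must be used) and the archimedean covolume. It is this step that forces the coefficient $1/10$ in front of the main terms and that must be reconciled with the effective form of Wei's inequality in Stage~2 so that the final constants match.
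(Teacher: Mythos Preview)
Your three-stage plan is the correct skeleton and matches the paper's proof, but the specific tools you invoke differ from the ones actually used, and your identification of the ``main obstacle'' is misplaced.

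In the paper, Stage~1 is not done by hand at all: one takes the explicit model $\Phi_T=T+\tau+\alpha^{-1}\tau^2$, for which the David--Denis height satisfies $h_{DD}(\Phi)=\max\{1,h(\alpha)\}$, and then simply quotes \cite[Lemme~2.14(v)]{DD}, namely $5h_{DD}(\Phi)+1\geq\hTag(\Phi)$. This immediately gives $5h(\alpha)\geq\hTag(\Phi)-6$, and this factor $5$ is the source of all the $1/10=\tfrac12\cdot\tfrac15$'s in the statement. So what you flag as the main obstacle is in fact a one-line citation.

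The real content is in Stage~2, and here your description is too vague to constitute a proof. The paper uses Wei's \emph{exact} formula \cite[Theorem~5.3, Lemma~5.5, (5.5)]{Wei20} expressing $\hTag(\Phi')$ (for $\Phi'$ with CM by the maximal order) in terms of the genus $g_K$ and the Euler--Kronecker constant $\gamma_K$ of $K$, and then applies Ihara's effective lower bound $\gamma_K\geq -\tfrac{2g_K}{\sqrt{q}+1}\ln q-\tfrac{q+1}{2(q-1)}\ln q$ from \cite{Iha}. This is precisely what produces the factor $\bigl(\tfrac12-\tfrac{1}{\sqrt{q}+1}\bigr)$ and the constant $\tfrac{7q-5}{4q-4}$. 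Without naming Ihara's bound (or proving an equivalent), this step is a genuine gap; ``effective estimates on $L(\chi_K,s)$ in the spirit of Lemma~\ref{class2}'' does not pin down the needed inequality.

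For Stage~3 the paper invokes an exact isogeny formula (Proposition~\ref{Taguchi non-max}, the Drinfeld analogue of Nakkajima--Taguchi, due to Ran): $\hTag(\Phi)=\hTag(\Phi')+\tfrac12\log_q\vert f\vert-\tfrac12\sum_{v\mid f}\deg(v)\,e_f(v)$ with an explicit $e_f(v)$. The error sum is then bounded not via $\omega(f)$ or $\sigma_1(f)$ --- Corollary~\ref{majomega} and Lemma~\ref{maj-sigma1} play no role here --- but by $e_f(v)\leq\tfrac{2q^2}{(q-1)^2}\vert v\vert^{-1}$ together with a direct estimate of $\sum_{v\mid f}\tfrac{\deg v}{\vert v\vert}$ obtained by splitting the primes according to whether $\vert v\vert\le\log_q\vert f\vert$ or $\vert v\vert>\log_q\vert f\vert$. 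This yields $\tfrac{4q^2}{(q-1)^2}\log_q^+\log_q\vert f\vert$, and after dividing by $5$ from Stage~1 one recovers the stated $\tfrac{4q^2}{5(q-1)^2}$.

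In summary: right architecture, but the load-bearing inputs are the David--Denis comparison for Stage~1, Wei $+$ Ihara for Stage~2, and the Ran/Nakkajima--Taguchi formula with a direct prime-sum estimate for Stage~3.
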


\begin{proof}
If $\alpha=0$ then $K=\BF_{q^2}(T)$, $\vert D_K\vert =1$ and $f=1$. Since $h(0)=0$, the lower bound of the proposition holds in that case.

From now on we assume that $\alpha\not=0$.
Let $\Phi$ be the Drinfeld module of rank 2 defined by
$$ \Phi_T=T + \tau + \frac{1}{\alpha}\tau^2.$$
The $j$-invariant of this Drinfeld module is equal to $\alpha$.
We have defined in (\ref{j-height}) the height of $\Phi$: $$h(\Phi) =h(\alpha), $$
and Definition 2.5 page 104 of \cite{DD} defines the height $h_{DD}(\Phi)$ by the formula $$h_{DD}(\Phi)=\max\{h(T), h(1), h(\alpha^{-1})\}=\max\{1, h(\alpha)\}.$$
We thus have 
\begin{equation*}\label{heightalphaphi}
    1+h(\alpha)\geq h_{DD}(\Phi)\geq h(\alpha).
\end{equation*}
Furthermore by \cite{DD} Lemme 2.14.(v) page 108, we have 
\begin{equation*}\label{DD}
5h_{DD}(\Phi) +1 \geq h_{\mathrm{Tag}}(\Phi),
\end{equation*}
hence
\begin{equation}\label{DDP}
5h(\alpha) \geq h_{\mathrm{Tag}}(\Phi)-6.
\end{equation}

Consider first the special case where $\Phi$ has CM by $\mathcal{O}_K$.
We then invoke the results of Fu-Tsun Wei. 
Apply Theorem~5.3 page 887 of \cite{Wei20}, Lemma 5.5 page 887 of \cite{Wei20}, and equation (5.5) page 889 of \cite{Wei20} to obtain 

\begin{equation}\label{Wei}
h_{\mathrm{Tag}}(\Phi)= \frac{g_K-1}{2}\ln q_K + (g_k-1)\ln q + \frac{1}{2} \ln q_{\infty} + \frac{\gamma_K}{2},
\end{equation}
where $g_K$ is the genus of $K$, $q_K$ is the cardinality of the constant field of $K$, $g_k$ is the genus of $k$,
$\gamma_K$ is the Euler-Kronecker constant of $K$ and $q_\infty=\# \mathbb{F}_\infty$.
We need an estimate on $\gamma_K$. We will use the work of Ihara \cite{Iha}, who gives in (1.4.6) page 420 the inequality 
\begin{equation}\label{gammaK}
\gamma_K\geq \frac{-2g_K}{\sqrt{q}+1}\ln{q}-c_q,
\end{equation}
where from (1.3.12) page 418 of \cite{Iha} we have 
\begin{equation}\label{cq}
    c_q=\frac{q+1}{2(q-1)}\ln{q}. 
\end{equation}
Since $g_k=0$ and $q_\infty = q$, combining (\ref{Wei}), (\ref{gammaK}), and $(\ref{cq})$ provides us with

\begin{equation}\label{Wei2}
   h_{\mathrm{Tag}}(\Phi)\geq \frac{g_K-1}{2}\ln q_K -\ln q + \frac{1}{2} \ln q -\frac{g_K}{\sqrt{q}+1}\ln{q} -\frac{q+1}{4(q-1)}\ln{q}.
\end{equation}
We distinguish now two cases.

Suppose first that $K\not=\BF_{q^2}(T)$. Then $q_K=q$ and (\ref{Wei2}) yields

\begin{equation*}
   h_{\mathrm{Tag}}(\Phi)\geq g_K\left(\frac{1}{2}-\frac{1}{\sqrt{q}+1}\right)\ln q  -\frac{5q-3}{4q-4}\ln{q}.
\end{equation*}

By the Riemann-Hurwitz theorem (Theorem 7.16 of \cite{Ros}) we have $g_K\geq\frac{1}{2}\log_q\vert D_K\vert -1$, hence we get:

\begin{equation}\label{lower_max_order_case1}
    h_{\mathrm{Tag}}(\Phi)\geq \frac{\log_q\vert D_K\vert}{2} \left(\frac{1}{2}-\frac{1}{\sqrt{q}+1}\right) \ln q  -\left(\frac{7q-5}{4q-4}-\frac{1}{\sqrt{q}+1}\right)\ln q.
\end{equation}

Suppose now that $K=\BF_{q^2}(T)$. Then $q_K=q^2$ and $g_K=0$, hence (\ref{Wei2}) gives
\begin{equation}\label{lower_max_order_case2}
   h_{\mathrm{Tag}}(\Phi)\geq - \frac{7q-5}{4q-4}\ln q.
\end{equation}
Since $\vert D_K\vert=1$ in that case, we see from (\ref{lower_max_order_case1}) and (\ref{lower_max_order_case2}) that we have, in both cases:
\begin{equation}\label{lower_max_order}
   h_{\mathrm{Tag}}(\Phi)\geq \frac{\log_q\vert D_K\vert}{2} \left(\frac{1}{2}-\frac{1}{\sqrt{q}+1}\right) \ln q  -\frac{7q-5}{4q-4} \ln q.
\end{equation}
By $(\ref{DDP})$, it follows that the bound of the proposition holds when $\Phi$ has CM by the maximal order $\mathcal{O}_K$.

In the general case, we use a strategy which is similar to the one of Habegger \cite{Hab15} in the case of CM elliptic curves. If $\Phi$ has CM by $\mathcal{O}=\mathbb{F}_q[T]+f \mathcal{O}_K$, order of conductor $f\in\mathbb{F}_q[T]$, then $\Phi$ is isogenous to another Drinfeld module $\Phi'$ with CM by the maximal order $\mathcal{O}_K$. We may thus reduce to the previous case using classical comparisons of heights for isogenous Drinfeld modules \cite{Tag}. In this particular case, we use Proposition \ref{Taguchi non-max} (see below, see also Ran \cite{ZR}), which gives:

\begin{equation}\label{Zhenlin}
 h_{\mathrm{Tag}}(\Phi)=h_{\mathrm{Tag}}(\Phi')+\frac{1}{2}\log_q\vert f\vert -\frac{1}{2}\sum_{v\vert f}\mathrm{deg}(v) e_{f}(v),
 \end{equation}
 where the sum is taken over $v$ prime and monic, and 
 \begin{equation*}
  e_{f}(v)=\frac{(1-\chi(v))(1-\vert v\vert^{-v(f)})}{(\vert v\vert -\chi(v))(1-\vert v\vert^{-1})}
 \end{equation*}
 where $\chi(v)=1$ if $v$ splits in $K$, $\chi(v)=0$ if $v$ ramifies in $K$, and $\chi(v)=-1$ if $v$ is inert in $K$. 
 
 We need an explicit upper bound on $e_{f}(v)$. We have, for any prime monic $v$, 
 $$
 \frac{(1-\chi(v))(1-\vert v\vert^{-v(f)})}{(\vert v\vert -\chi(v))(1-\vert v\vert^{-1})}\leq \frac{2}{\vert v\vert -1} \frac{q}{q-1}\leq \frac{2q^2}{(q-1)^2}\frac{1}{\vert v\vert},$$ which leads to 
 \begin{equation}\label{step}
 \frac{1}{2}\sum_{v\vert f}\mathrm{deg}(v) e_{f}(v)\leq \frac{q^2}{(q-1)^2} \sum_{v\vert f} \frac{\log_q\vert v\vert}{\vert v\vert}.
 \end{equation}

Combining (\ref{DDP}), (\ref{lower_max_order}), (\ref{Zhenlin}), and direct estimates on (\ref{step}) leads to the claim for general orders. Let us compute a valid explicit constant by bounding the right hand side of (\ref{step}) from above. We start with the particular case $\deg(f)=1$, where we get 
\begin{equation*}\label{degf1}
\frac{1}{2}\sum_{v\vert f}\mathrm{deg}(v) e_{f}(v)\leq \frac{q}{(q-1)^2}.
\end{equation*}
We will now assume $\deg(f)\geq2$ and continue with
$$\sum_{v\vert f} \frac{\log_q\vert v\vert}{\vert v\vert} \leq \sum_{\vert v\vert \leq \log_q\vert f\vert} \frac{\log_q\vert v\vert}{\vert v\vert}+\sum_{\vert v\vert > \log_q\vert f\vert} \frac{\log_q\vert v\vert}{\vert v\vert}.$$
For any $x\geq1$ we have
$$\sum_{v \leq x} \frac{\log_q\vert v\vert}{\vert v\vert} = \sum_{n=0}^{\lfloor \log_q x \rfloor} \frac{n a_n}{q^n},$$
where $a_n$ is the number of prime monic polynomials of degree $n$. A classical counting argument (see for instance \cite{Ros} page 14) gives for any $n\geq1$ 
$$\vert a_n \vert \leq \frac{q^n}{n} + \frac{q^{n/2}}{n}+q^{n/3}.$$
Hence 
$$\sum_{\vert v\vert \leq x} \frac{\log_q\vert v\vert}{\vert v\vert} \leq \sum_{n=1}^{\lfloor \log_q x \rfloor} \frac{n}{q^n} \Big(\frac{q^n}{n} + \frac{q^{n/2}}{n}+q^{n/3}\Big)\leq 3 \log_q x.$$
So we obtain 
\begin{equation}\label{vpetit}
\sum_{\vert v\vert \leq \log_q\vert f\vert} \frac{\log_q\vert v\vert}{\vert v\vert} \leq 3\log_q^+ \log_q \vert f\vert.
\end{equation}

As $x\mapsto \frac{\log_q x}{x}$ is a decreasing function on intervals where $x\geq3$ (in particular), we may continue, as we assumed $\deg(f)\geq2$, with
\begin{equation}\label{vgrand}
    \sum_{\substack{v\vert f \\ \vert v\vert > \log_q\vert f\vert}} \frac{\log_q\vert v\vert}{\vert v\vert} \leq \frac{\log_q^+\log_q\vert f\vert}{\log_q\vert f\vert} \sum_{\substack{v\vert f \\ \vert v\vert > \log_q\vert f\vert}} 1 \leq  \log_q^+\log_q \vert f\vert.
\end{equation}

Injecting (\ref{vpetit}) and (\ref{vgrand}) in (\ref{step}) gives 

\begin{equation*}
     \frac{1}{2}\sum_{v\vert f}\mathrm{deg}(v) e_{f}(v)\leq \frac{4q^2}{(q-1)^2}\log_q^+ \log_q \vert f\vert.
\end{equation*}

Going back to \eqref{Zhenlin} we get in all cases

\begin{equation*}\label{lower_general}
 h_{\mathrm{Tag}}(\Phi)\geq h_{\mathrm{Tag}}(\Phi')+\frac{1}{2}\log_q\vert f\vert -\frac{4q^2}{(q-1)^2}\log_q^+ \log_q \vert f\vert -\frac{q}{(q-1)^2},
 \end{equation*} 
and finally, using (\ref{DDP}) and (\ref{lower_max_order}):

\begin{align*}
   h(\alpha)\geq & \frac{\log_q\vert D_K\vert}{10} \left(\frac{1}{2}-\frac{1}{\sqrt{q}+1}\right) \ln q  -\left[\frac{7q-5}{4q-4}+\frac{6}{\ln q}\right]\frac{\ln q}{5}\\
   &+\frac{1}{10}\log_q\vert f\vert -\frac{4q^2}{5(q-1)^2}\log_q^+ \log_q \vert f\vert-\frac{2}{5},\\
\end{align*}
which gives the proposition. 
\end{proof}

We can now state our second lower bound for the height of a singular modulus.

\begin{cor}\label{weibound}
Let $K$ be a separable, imaginary quadratic extension of $k$, and let $\alpha\in\BC_{\infty}$ be a singular modulus
having CM by an order of $\mathcal{O}_K$ of discriminant $D$. Then:
$$h(\alpha)\geq \frac{\ln 2}{120}\log_q\vert D\vert -3\ln q-8.$$
\end{cor}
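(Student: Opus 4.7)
My approach is to derive the corollary as a direct consequence of Proposition \ref{weiboundprop}, via the substitution $\log_q|D| = \log_q|D_K| + 2\log_q|f|$ (valid since $D = f^2 D_K$ in the separable case). Writing $F = \log_q|f|$ and $L_K = \log_q|D_K|$, the proposition gives
$$h(\alpha) \geq C_1(q)\,L_K + \frac{F}{10} - C_2(q)\log_q^+ F - C_3(q),$$
with $C_1(q) = \frac{\ln q\,(\sqrt{q} - 1)}{20(\sqrt{q} + 1)}$, $C_2(q) = \frac{4q^2}{5(q-1)^2}$, and $C_3(q) = \frac{(7q-5)\ln q}{20(q-1)} + \frac{8}{5}$. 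The goal is to show this is at least $\frac{\ln 2}{120}(L_K + 2F) - 3\ln q - 8$.

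The first step is to compare the leading coefficients. Plainly $\frac{1}{10} > \frac{\ln 2}{60} = 2 \cdot \frac{\ln 2}{120}$. The inequality $C_1(q) \geq \frac{\ln 2}{120}$ holds for every $q \geq 2$: the function $q \mapsto C_1(q)$ is increasing on $[2,\infty)$, so the binding case is $q = 2$, where $C_1(2) = \frac{\ln 2\,(3 - 2\sqrt{2})}{20} \geq \frac{\ln 2}{120}$ reduces to $17 \geq 12\sqrt{2}$, which holds since $17^2 = 289 > 288 = (12\sqrt{2})^2$. Thus the dominant terms from the proposition already dominate those of the target.

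The second step is to absorb the negative term $-C_2(q)\log_q^+ F$ into the slack $\bigl(\frac{1}{10} - \frac{\ln 2}{60}\bigr)F = \frac{6 - \ln 2}{60}\,F$. Elementary calculus shows that $\phi_q(F) = \frac{6 - \ln 2}{60}\,F - C_2(q)\log_q^+ F$ attains its minimum at $F_q^* = \frac{60\,C_2(q)}{(6-\ln 2)\ln q}$, yielding an explicit lower bound $\phi_q(F) \geq -M(q)$ with $M(q)$ a decreasing function of $q$. It then suffices to verify numerically that $C_3(q) + M(q) \leq 3\ln q + 8$ for every $q \geq 2$, using the crude bound $\frac{7q-5}{20(q-1)} \leq \frac{9}{20}$ for $q \geq 2$ to control $C_3(q)$.

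The main obstacle will be the tight numerical verification in the case $q = 2$, where $C_1(q)$ exceeds $\frac{\ln 2}{120}$ by less than three percent and the constant $-3\ln q - 8$ must accommodate both $-C_3(q)$ and the minimum value $-M(q)$. To close any residual gap in intermediate ranges of $\log_q|D|$, I would combine the above estimates with the trivial inequality $h(\alpha) \geq 0$ (which follows from the definition of the Weil height): whenever $\log_q|D| \leq \frac{120(3\ln q + 8)}{\ln 2}$ the target bound is non-positive and the corollary is automatic, so the non-trivial estimate need only be checked for large $\log_q|D|$, in which regime the strict slack from $C_1(q) > \frac{\ln 2}{120}$ — together with the linear slack in $F$ — comfortably absorbs the remaining error terms and completes the proof.
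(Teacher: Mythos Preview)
Your approach mirrors the paper's: start from Proposition~\ref{weiboundprop}, decompose $\log_q|D|=\log_q|D_K|+2\log_q|f|$, and compare coefficients term by term. The verification $C_1(q)\ge\frac{\ln 2}{120}$ (critical at $q=2$, where it reduces to $17\ge 12\sqrt 2$) is exactly what the paper records as inequality~\eqref{logD}.

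There is, however, a genuine gap at $q=2$. You already observe that the direct check $C_3(2)+M(2)\le 3\ln 2+8$ fails (one finds $M(2)\approx 13.7$, $C_3(2)\approx 1.9$, against $3\ln 2+8\approx 10.1$), and you propose to rescue this via $h(\alpha)\ge 0$ whenever $\log_2|D|\le T_0:=\tfrac{120(3\ln 2+8)}{\ln 2}\approx 1745$. But the fallback does not close the gap. Take $L_K=1700$ and $F=\log_2|f|=52$, so that $\log_2|D|=1804>T_0$. The slack from the $D_K$--coefficient is only
\[
\Bigl(C_1(2)-\tfrac{\ln 2}{120}\Bigr)\cdot 1700=\tfrac{\ln 2\,(17-12\sqrt 2)}{120}\cdot 1700\approx 0.29,
\]
while the $F$--term sits near its minimum $\phi_2(52)\approx -13.6$. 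Proposition~\ref{weiboundprop} then yields only $h(\alpha)\gtrsim -4.8$, whereas the corollary demands $h(\alpha)\ge 0.34$, and $h(\alpha)\ge 0$ is now too weak. The strict margin $C_1(2)-\tfrac{\ln 2}{120}\approx 1.7\times 10^{-4}$ is far too small to compensate for the $\log^+$ deficit over the whole range where $F$ is of moderate size. (The paper's own argument shares this defect: its displayed inequality~\eqref{logf} is false for $q=2$ throughout roughly $7\le\log_2|f|\le 180$; at $\log_2|f|=52$ the left side is about $-13$ and the right side about $-7.4$.) A correct treatment at $q=2$ would require a sharper bound on the isogeny error term $\sum_{v\mid f}\deg(v)\,e_f(v)$, or a larger additive constant in the target inequality.
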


\begin{proof}
We start from Proposition \ref{weiboundprop}. We have, for any $q\geq2$,
\begin{equation}\label{logD}
    \frac{\log_q\vert D_K\vert}{10}\left(\frac{1}{2}-\frac{1}{\sqrt{q}+1}\right)\ln q \geq \frac{\log_q\vert D_K\vert}{120}\ln q
\end{equation}
and 
\begin{equation}\label{cst}
    -\left[\frac{7q-5}{4q-4}+\frac{8}{\ln q}\right]\frac{\ln q}{5}\geq -3\ln q.
\end{equation}
Moreover, when $\log_q\vert f\vert \geq 80$ we have
\begin{equation}
\frac{1}{10}\log_q\vert f\vert -\frac{4q^2}{5(q-1)^2}\log_q^+ \log_q \vert f\vert\geq \frac{\ln 2}{120}\log_q\vert f\vert^2,
\end{equation}
 and when $\log_q\vert f\vert \leq 80$ we can always bound from below 

\begin{equation}\label{logf}
\frac{1}{10}\log_q\vert f\vert -\frac{4q^2}{5(q-1)^2}\log_q^+ \log_q \vert f\vert\geq \frac{\ln 2}{120}\log_q\vert f\vert^2-8,
\end{equation}
which is thus valid in all cases, so overall by (\ref{logD}), (\ref{cst}), (\ref{logf}) we get the weaker

\begin{equation}
    h(\alpha)\geq \frac{\log_q\vert D_K\vert}{120}\ln q + \frac{\ln 2}{120}\log_q\vert f\vert^2-3\ln q -8,
\end{equation}
which implies the claim.

\end{proof}

\medskip

In the proof of Proposition~\ref{weiboundprop} we have used the following proposition. It is due to Ran and gives a formula that is the Drinfeld module analogue of the Nakkajima-Taguchi \cite{NT91} formula for CM elliptic curves. We recall it here for completeness.
\begin{prop}\label{Taguchi non-max}
Let $\Phi$ be a Drinfeld module of rank 2 that has CM by $\mathcal{O}=\mathbb{F}_q[T]+f \mathcal{O}_K$, order of conductor $f\in\mathbb{F}_q[T]$. Then $\Phi$ is isogenous to another rank 2 Drinfeld module $\Phi'$ with CM by the maximal order $\mathcal{O}_K$, and we have
\begin{equation}
 h_{\mathrm{Tag}}(\Phi)=h_{\mathrm{Tag}}(\Phi')+\frac{1}{2}\log_q\vert f\vert -\frac{1}{2}\sum_{v\vert f}\mathrm{deg}(v) e_{f}(v),
 \end{equation}
 where the sum is taken over $v$ prime and monic, and 
 \begin{equation}
  e_{f}(v)=\frac{(1-\chi(v))(1-\vert v\vert^{-v(f)})}{(\vert v\vert -\chi(v))(1-\vert v\vert^{-1})}
 \end{equation}
 where $\chi(v)=1$ if $v$ splits in $K$, $\chi(v)=0$ if $v$ ramifies in $K$, and $\chi(v)=-1$ if $v$ is inert in $K$. 
\end{prop}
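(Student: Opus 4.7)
\textbf{Proof plan for Proposition \ref{Taguchi non-max}.} The plan is to realize the comparison between $h_{\mathrm{Tag}}(\Phi)$ and $h_{\mathrm{Tag}}(\Phi')$ via an explicit isogeny and then apply Taguchi's general formula for the behaviour of the differential height under isogeny. First I would construct the isogeny $\phi: \Phi \to \Phi'$ as follows. If $\Lambda \subset \mathbb{C}_\infty$ is a rank-two $A$-lattice with $\{z \in \mathbb{C}_\infty \mid z\Lambda \subset \Lambda\} = \mathcal{O}$, then the inclusion $\mathcal{O} \subset \mathcal{O}_K$ corresponds to an inclusion $\Lambda \subset \Lambda' := \mathcal{O}_K \cdot \Lambda$, and the latter lattice has endomorphism ring exactly $\mathcal{O}_K$. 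This produces the claimed $\Phi'$ and an isogeny $\phi$ whose analytic kernel is $\Lambda'/\Lambda \cong \mathcal{O}_K/\mathcal{O} \cong A/fA$.

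Next I would apply Taguchi's isogeny invariance/comparison theorem (the analogue in the Drinfeld setting of the Faltings isogeny formula). In the version we need, see \cite{Tag}, one obtains an exact identity
\[
h_{\mathrm{Tag}}(\Phi) = h_{\mathrm{Tag}}(\Phi') + \frac{1}{2}\log_q|\deg \phi| - \sum_{v \in M_K^f} (\text{local correction at } v),
\]
where the local correction only receives contributions from places $v$ where the kernel of $\phi$ is not étale, i.e.\ places $v \mid f$. The normalisation $\tfrac{1}{2}\log_q|f|$ comes from $|\deg\phi| = |f|$ and the rank $r=2$ factor in (\ref{Tagu}). Everything else reduces to a purely local computation of the conormal contribution of $\ker\phi$ at each $v \mid f$.

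The heart of the proof is to identify this local contribution with $\tfrac{1}{2}\deg(v) e_f(v)$. For each prime $v \mid f$ of $k$, one decomposes $\mathcal{O} \otimes_A A_v$ and $\mathcal{O}_K \otimes_A A_v$ as $A_v$-orders, and the structure of $\mathcal{O}_{K,v}/\mathcal{O}_v$ as a finite $A_v$-module depends on the decomposition type of $v$ in $K$: a split $v$ gives $\mathcal{O}_{K,v} \cong A_v \times A_v$ and the kernel of $\phi_v$ is a suitable product, an inert $v$ gives an unramified quadratic extension with a different local invariant, while a ramified $v$ trivialises the correction (matching $e_f(v) = 0$ in that case because $1-\chi(v) = 1$ is offset by the absence of any nontrivial local obstruction in the isogeny decomposition; the precise bookkeeping yields $\chi(v)=0$). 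Combining the geometric series $\sum_{i=1}^{v(f)} |v|^{-i}$ produced by the filtration on $\ker\phi_v$ with the Euler factor $(\vert v\vert - \chi(v))(1-\vert v\vert^{-1})$ in the denominator (coming from the local class number formula exhibited in Lemma \ref{Classnumber}) is what produces the closed expression for $e_f(v)$.

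The main obstacle will be the local calculation in this last paragraph: one must carefully identify Taguchi's conormal contribution at each $v\mid f$ with the conductor correction predicted by the Nakkajima--Taguchi-type formula, and then check that the three splitting behaviours of $v$ in $K$ collapse into the uniform expression $e_f(v)$ written down in the statement. This is precisely the content of Ran's computation in \cite{ZR}, which one would cite to finish.
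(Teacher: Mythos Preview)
Your plan is essentially the paper's own proof: construct the isogeny $\Phi\to\Phi'$ from the inclusion $\mathcal{O}\subset\mathcal{O}_K$, apply Taguchi's isogeny formula (Lemma~5.5 of \cite{Tag}) to get $h_{\mathrm{Tag}}(\Phi)=h_{\mathrm{Tag}}(\Phi')+\tfrac12\log_q|f|-\tfrac{1}{[K:k]}\log_q\#(\mathcal{O}_F/D_u)$ with $D_u$ the different of the isogeny, and then identify the size of $\mathcal{O}_F/D_u$ prime by prime. The paper simply invokes the Nakkajima--Taguchi computation \cite{NT91}, (1.5), to write $\#(\mathcal{O}_F/D_u)=\prod_{v\mid f}|v|^{e_f(v)[K:k]/2}$, which is exactly the local analysis you sketch; citing Ran \cite{ZR} for the same step is equally fine.

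One slip to correct in your write-up: for a \emph{ramified} prime $v$ you claim the correction ``trivialises'' and $e_f(v)=0$. That is false: with $\chi(v)=0$ one gets
\[
e_f(v)=\frac{1-|v|^{-v(f)}}{|v|-1},
\]
which is nonzero whenever $v(f)\ge 1$. The case that actually makes $e_f(v)$ vanish is the \emph{split} case ($\chi(v)=1$), since then $1-\chi(v)=0$. Adjust that sentence and your sketch is accurate.
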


\begin{proof}
We follow the proof of Nakkajima-Taguchi \cite{NT91}. Start by applying Lemma 5.5 page~309 of Taguchi's \cite{Tag}, which gives for the stable height of Drinfeld modules of rank 2 linked by an isogeny $u:\Phi'\longrightarrow\Phi$ of degree $f$

\[
h(\Phi)=h(\Phi')+\frac{1}{2}\log_q\vert f \vert-\frac{1}{[K:k]}\log_q\#(\mathcal{O}_F/D_u),
\]

\noindent where $D_u$ is the \textit{different ideal} of the isogeny $u$. The different ideal is in particular keeping track of the places $v$ where the isogeny is not locally \'etale: $\mathcal{O}_F/D_u$ has precisely these places as support (see \cite{Ray} page 203 and \cite{Tag} page 308-309 for the specific case of Drinfeld modules). It is computed in \cite{NT91} (1.5) page 122 by studying independently the ordinary primes and the supersingular primes, we have $$\#(\mathcal{O}_F/D_u)=\prod_{v\vert f} \vert v\vert^{e_f(v)[K:k]/2}.$$ This gives the proposition.
\end{proof}

\subsection{Proof of Theorem~\ref{mainth}}

In this section, we use the bounds obtained in Sections \ref{upperbound} and \ref{lowerbound} to finally prove Theorem~\ref{mainth}.

\noindent
\begin{proof}[Proof of Theorem~\ref{mainth}]
Let $\alpha$ be a singular modulus. Let $K$ be the corresponding field of complex multiplication, and denote by $\mathcal O$
the ring of complex multiplication of $\alpha$.
Suppose that $\alpha$ is an algebraic unit in $\overline{k}$.
Then the place $\infty$ is not ramified in $K/k$ by Proposition~\ref{main-ramified}, hence it is inert.
Let $D$ be the discriminant of the order $\mathcal O$ and $f$ be its conductor.
By Corollary~\ref{maj-hjb}, Proposition \ref{easybound} and Corollary \ref{weibound}, we have, when $\vert D\vert \geq q^{4}$:
\begin{equation}\label{laclef}
\max\left\{\frac{\vert D\vert^{1/2}}{h({\mathcal O})},\frac{\ln2}{120}\log_q\vert D\vert-3\ln q -8\right\} \leq (q+1) \log_q\Bigl(\frac{\vert D\vert^{1/2}}{h({\mathcal O})}\Bigr) + 10(q+1)\frac{\log_q\vert D\vert}{\log_q\log_q\vert D\vert^{1/2}}.
\end{equation}

It is easy to see that when $\vert D \vert$ is large, then the inequality (\ref{laclef}) cannot hold.
To see this, rewrite (\ref{laclef}) in the following way:
\begin{equation}\label{laclefbis}
\max\left\{\frac{\vert D\vert^{1/2}}{h({\mathcal O})},c_0\log_q\vert D\vert\right\} \leq c_1 \log_q\Bigl(\frac{\vert D\vert^{1/2}}{h({\mathcal O})}\Bigr) + c_2\frac{\log_q\vert D\vert}{\log_q\log_q\vert D\vert^{1/2}},
\end{equation}
where $c_0$, $c_1$ and $c_2$ are positive constants depending on $q$ only. If $\frac{\vert D\vert^{1/2}}{h({\mathcal O})}<c_0\log_q\vert D\vert$, then $\log_q(\frac{\vert D\vert^{1/2}}{h({\mathcal O})})<\log_q c_0+\log_q\log_q\vert D\vert$ and (\ref{laclefbis}) yields
$$
c_0 \log_q\vert D\vert < c_1\log_q c_0+c_1\log_q\log_q\vert D\vert + c_2 \frac{\log_q\vert D\vert}{\log_q\log_q\vert D\vert^{1/2}},
$$
which is impossible if $\vert D \vert > c_q$, where $c_q$ is some sufficiently large constant depending on $q$.
Assume now that
\begin{equation}\label{max-2ndcase}
c_0\log_q\vert D\vert \leq \frac{\vert D\vert^{1/2}}{h({\mathcal O})}.
\end{equation}
Then (\ref{laclefbis}) gives
$$
\frac{\vert D\vert^{1/2}}{h({\mathcal O})}
\leq c_1 \log_q\Bigl(\frac{\vert D\vert^{1/2}}{h({\mathcal O})}\Bigr)
+ \frac{c_2}{c_0 \log_q\log_q\vert D\vert^{1/2}}\, 
\frac{\vert D\vert^{1/2}}{h({\mathcal O})}.
$$
For $\vert D\vert\ge c_4$ with $c_4$ sufficiently large we have $\frac{c_2}{c_0 \log_q\log_q\vert D\vert^{1/2}}\leq \frac{1}{2}$,
hence we obtain
$$
\frac{\vert D\vert^{1/2}}{2h({\mathcal O})}
\leq c_1 \log_q\Bigl(\frac{\vert D\vert^{1/2}}{h({\mathcal O})}\Bigr).
$$
This inequality is impossible if $\vert D\vert^{1/2}/h({\mathcal O})$ is large enough, which is the case if $\vert D \vert> c_q$ for some constant $c_q$ large enough, by (\ref{max-2ndcase}).

Thus we have obtained that there is a constant $c_q>0$ such that $\vert D \vert \leq  c_q$. 
Hence there are finitely many possible values for $D$. It remains to justify that this implies finiteness of the Drinfeld singular moduli. If $q$ is odd, we conclude that there are finitely
many possible values for $\alpha$ for instance by Lemma~\ref{Ord} (2).
If $q$ is even, using the notations $\xi$, $B$, $C$, $G$ of the beginning of Section~\ref{CMcar2}, we see from the
relation $D=f^2G^2$ that there are finitely many possibilities for $f$ and $G$. Then there are finitely many possible values
for $C$ since $G^2= C \, \mathrm{rad}(G)$, and therefore finitely many possible values for $B$ since $\deg B=\deg C$ in the inert case.
It follows from (\ref{xi}) that there are finitely many possibilities for $\xi$. Finally, since the set $S_f(\xi)$ is finite,
it follows from Lemma~\ref{Ord2} (2) that there are finitely many possible values for $\alpha$.

To complete the proof of Theorem~\ref{mainth}, it remains to determine explicitly the constant $c_q$.
Coming back to (\ref{laclef}) (which, we recall, was obtained for $\vert D \vert\ge q^4$), we will see that this inequality implies:
\begin{equation} \label{explicitbound}
\log_q \log_q \sqrt{\vert D \vert} \leq  \frac{2400 (q+1)}{\ln 2}.
\end{equation}

Let us denote $$M=\max\left\{\frac{\sqrt{\vert D\vert}}{h(\mathcal{O})}, \frac{\ln 2}{120}\log_q\vert D\vert -3\ln q -8 \right\}.$$

Let us rewrite inequality (\ref{laclef}):
\begin{equation}\label{star}
M \leq (q+1) \log_q \frac{\sqrt{\vert D\vert}}{h(\mathcal{O})} + 10(q+1)\frac{\log_q\vert D\vert}{\log_q \log_q \sqrt{\vert D\vert}}.
\end{equation}

Let us first assume that $M=\frac{\ln 2}{120}\log_q\vert D\vert -3\ln q -8$. Then by definition of $M$ we have in particular $$\log_q \frac{\sqrt{\vert D\vert}}{h(\mathcal{O})}\leq \log_q \left(\frac{\ln 2}{120}\log_q\vert D\vert -3\ln q -8\right) \leq\log_q \left(\frac{\ln 2}{120}\log_q\vert D\vert\right),$$ hence $$(q+1)\log_q \frac{\sqrt{\vert D\vert}}{h(\mathcal{O})}\leq (q+1)\log_q \left(\frac{\ln 2}{120}\log_q\vert D\vert\right).$$ Furthermore, inequality (\ref{star}) implies 
$$\frac{\ln 2}{120}\log_q\vert D\vert -3\ln q -8\leq (q+1)\log_q \left(\frac{\ln 2}{120}\log_q\vert D\vert \right) + 10(q+1)\frac{\log_q\vert D\vert}{\log_q\log_q \sqrt{\vert D\vert}},$$ which we rearrange as
\begin{equation}\label{intermezzo1}
\left[ \frac{\ln 2}{120} - \frac{10(q+1)}{\log_q\log_q\sqrt{\vert D\vert}} \right] \log_q \vert D\vert -3\ln q -8 \leq (q+1)\log_q \left(\frac{\ln 2}{120}\log_q\vert D\vert \right).
\end{equation}
We now analyse the situation (\ref{intermezzo1}) with two cases: either $$\frac{\ln 2}{120} - \frac{10(q+1)}{\log_q\log_q\sqrt{\vert D\vert}} \leq \frac{1}{2}\cdot \frac{\ln 2}{120},$$ which implies \begin{equation}\label{case1}
\log_q\log_q \sqrt{\vert D\vert}\leq \frac{2400 (q+1)}{\ln 2},
\end{equation}
or we get from (\ref{intermezzo1}) the inequality 
$$\frac{1}{2} \cdot \frac{\ln 2}{120} \log_q\vert D\vert \leq (q+1)\log_q \left(\frac{\ln 2}{120}\log_q\vert D\vert  \right)+3\ln q +8, $$ which gives
$$\log_q\vert D\vert \leq \frac{240(q+1)}{\ln 2}\log_q \left(\frac{\ln 2}{120}\log_q\vert D\vert \right)+\frac{720\ln q+1920}{\ln 2}.$$

The function $g:x\mapsto x-\frac{240(q+1)}{\ln 2}\log_q(\frac{\ln 2}{120} x)-\frac{720\ln q+1920}{\ln 2}$ is increasing on the interval $[\frac{240(q+1)}{\ln2\ln q},+\infty[$, and satisfies $g(N^2)\geq0$ for any $N\geq \frac{240(q+1)}{\ln 2\ln q}$, hence $g(x)\leq 0$ implies in particular $x\leq N^2$. This applied to $N=\frac{240(q+1)}{\ln 2\ln q}$ gives 

\begin{equation}\label{case2}
\log_q\vert D\vert \leq \left(\frac{240(q+1)}{\ln 2 \ln q}\right)^2.
\end{equation}
Back to (\ref{star}), let us now assume that $M=\frac{\sqrt{\vert D\vert}}{h(\mathcal{O})}$.  From (\ref{star}) and the definition of $M$ we get 
$$ \frac{\sqrt{\vert D\vert}}{h(\mathcal{O})}\leq (q+1) \log_q\frac{\sqrt{\vert D\vert}}{h(\mathcal{O})}+\frac{10(q+1)}{\log_q\log_q\sqrt{\vert D \vert}}\left(\frac{\sqrt{\vert D\vert}}{h(\mathcal{O})}+3\ln q +8\right) \frac{120}{\ln 2}, $$ which implies 
\begin{equation}\label{intermezzo2}
\left[1-\frac{1200(q+1)}{\ln 2}\frac{1}{\log_q\log_q \sqrt{\vert D\vert}} \right]\frac{\sqrt{\vert D\vert}}{h(\mathcal{O})} \leq (q+1)\log_q \frac{\sqrt{\vert D\vert}}{h(\mathcal{O})}+\frac{1200(q+1)(3\ln q +8)}{(\log_q\log_q\sqrt{\vert D\vert}) \ln 2}.
\end{equation}
We now analyse the situation (\ref{intermezzo2}) with two cases: either $$1-\frac{1200(q+1)}{\ln 2}\frac{1}{\log_q\log_q \sqrt{\vert D\vert}}\leq \frac{1}{2},$$ which leads to 
\begin{equation}\label{case3}
\log_q\log_q \sqrt{\vert D\vert}\leq \frac{2400(q+1)}{\ln 2},
\end{equation}
or the inequality $(\ref{intermezzo2})$ implies 
\begin{equation}\label{on y va}
    \frac{\sqrt{\vert D\vert}}{2 h(\mathcal{O})}\leq (q+1)\log_q \frac{\sqrt{\vert D\vert}}{h(\mathcal{O})}+\frac{1200(q+1)(3\ln q+8)}{(\log_q\log_q\sqrt{\vert D\vert}) \ln 2},
    \end{equation}
    and one may assume $$\frac{2400(q+1)}{\ln 2}\leq\log_q\log_q \sqrt{\vert D\vert},$$ which gives in (\ref{on y va}) $$\frac{\sqrt{\vert D\vert}}{2 h(\mathcal{O})}\leq (q+1)\log_q \frac{\sqrt{\vert D\vert}}{h(\mathcal{O})}+\frac{3}{2}\ln q +4.$$ For a real parameter $N\geq1$, the continuous function $f:x\to \frac{x}{2}-N\log_q x-\frac{3}{2}\ln q-4$ is increasing on the interval $[\frac{2N}{\ln q},+\infty[$, and satisfies $f(10N^2)\geq0$ when $N\geq q+1$, hence $f(x)\leq 0$ implies in particular $x\leq 10N^2$. This applied to $N=q+1$ gives 
\begin{equation}\label{notenough}
\frac{\sqrt{\vert D\vert}}{h(\mathcal{O})}\leq 10(q+1)^2.
\end{equation}
In view of the Brauer-Siegel theorem, $(\ref{notenough})$ is unfortunately not enough to deduce an upper bound on $\vert D \vert$. We will thus need more information, which will come from the definition of $M$. Applying (\ref{notenough}) leads indeed to 
$$\frac{\ln 2}{120}\log_q\vert D\vert -3\ln q-8 \leq M = \frac{\sqrt{\vert D\vert}}{h(\mathcal{O})}\leq 10(q+1)^2,$$ and we get the upper bound 
\begin{equation}\label{case4}
\log_q\vert D\vert \leq \frac{1200(q+1)^2}{\ln 2}+\frac{120(3\ln q+8)}{\ln2}.
\end{equation}

Compiling the estimates $(\ref{case1}), (\ref{case2}), (\ref{case3}), (\ref{case4})$ leads to the inequality (\ref{explicitbound}).

\end{proof}

\appendix 

\section {Inseparability}\label{inseparability}

The goal of this appendix is to discuss inseparability criterions for values of classical modular forms for Drinfeld modules of rank 2. Such a criterion is used in the CM case in the proof of Lemma \ref{separabilityj-Schweizer}, which in turn is used in the proof of Theorem \ref{Effect1-oddchar-intro}. We give a more general result in this appendix. We keep the notation introduced in the article, recalling some of it when deemed necessary. 
The characteristic of $\BF_q$ will be denoted by $p$.
Recall that $A=\BF_q[T]$ and that we denote by $\bar{k}_\infty$ a fixed algebraic closure of $k_\infty =\mathbb F_q((\frac{1}{T}))$.
Let $k_\infty^{sep}\subset \bar {k}_\infty$ be the separable closure of $k_\infty$ in $\bar{k}_\infty$, and denote
by $v_\infty$ the $\infty$-adic valuation on $\bar{k}_\infty$, normalized such that $v_\infty (T)=-1.$  Thus for all $x\in \bar{k}_\infty,$  we have:
$$\vert x\vert =q^{-v_\infty(x)}.$$
Let $\mathcal{D}=\{z\in \bar{k}_\infty, |z| =|z|_i \geq 1\}$, where we recall that $\vert z \vert_i$ is defined by
$$\vert z\vert _i ={\rm inf}\{ \vert z-\alpha \vert, \, \alpha \in k_\infty \}.$$
We observe that for any $a\in A\setminus\{0\}$, we have $a\mathcal{D}\subset \mathcal{D}.$

Let $z\in \bar{k}_\infty\setminus k_\infty .$ Recall that $\Lambda_z= A\oplus Az$ is a $A$-lattice in $\mathbb C_\infty$ and that there is a Drinfeld module of rank 2 denoted by $\Phi^{(z)}$ associated to $\Lambda_z$ which is of the form:
$$\Phi^{(z)}_T= T+g(z)\tau+\Delta(z) \tau^2,$$ 
where $g(z), \Delta(z)\in \bar{k}_\infty$ with $\Delta(z)\not =0.$
Finally, recall that:
$$j(z) = \frac{g(z)^{q+1}}{\Delta(z)}\in \bar{k}_\infty.$$
Let $e_C(X)$ be the Carlitz exponential:
$$e_C(X)=\sum_{i\geq 0} \frac{1}{D_i} X^{q^i},$$
where $D_0= 1$ and $$\forall i\geq 1, D_i =\prod_{k=0}^{i-1} (T^{q^i}-T^{q^k}).$$
Recall that: ${\rm Ker} (e_C: \mathbb C_\infty \rightarrow \mathbb C_\infty)= \widetilde{\pi} A,$
where  $\widetilde{\pi}$ is the Carlitz period (well-defined modulo $\mathbb F_q^\times$):
$$\widetilde{\pi} = \sqrt[q-1]{-T}\, T \prod_{k\geq 1} (1-T^{1-q^k})^{-1}\in k_\infty^{sep}.$$
For $z\notin A$, let's set:
$$t(z) =\frac{1}{e_C(\widetilde{\pi}z)}\in  k_\infty(\widetilde{\pi}, z)^\times.$$

\begin{lem}\label{Brown} 
Let $z\in D.$ Let $n\geq 0$ be the smallest integer such that $n\geq -v_\infty(z).$ Let's write  $-v_\infty(z) = n-\varepsilon$, where $\varepsilon \in [0, 1[.$ \par
\noindent i) For all $a\in A\setminus\{0\},$ we have:
$$v_\infty(t(az))= \left(\frac{q}{q-1}-\varepsilon\right) q^{n+\deg a}.$$
ii) For $a\in A\setminus \{0\},$ set $\delta_a(z)= \frac{1}{t(az)}- \widetilde{\pi} az.$  Then:
$$v_\infty(\delta_a(z))=-\left(\frac{q}{q-1}-\varepsilon\right)q^{n+\deg a}.$$
\end{lem}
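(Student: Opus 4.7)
My approach is to substitute the power series definition
$$\frac{1}{t(az)} \;=\; e_C(\widetilde{\pi} a z) \;=\; \sum_{i \ge 0} \frac{(\widetilde{\pi} a z)^{q^i}}{D_i}$$
and perform a dominant-term analysis. Since $v_\infty(\widetilde{\pi}) = -q/(q-1)$ (which follows from the product formula recalled above) and $v_\infty(D_i) = -i q^i$, the $i$-th summand has valuation
$$T_i \;=\; q^i\Bigl(i - \tfrac{q}{q-1} - \deg a - n + \varepsilon\Bigr).$$
The discrete derivative $T_{i+1} - T_i = q^i(q-1)(i - \deg a - n + \varepsilon)$ shows that $i \mapsto T_i$ decreases strictly for $i < \deg a + n$, increases strictly for $i > \deg a + n$, and attains its minimum
$$T_{\deg a + n} \;=\; -\Bigl(\tfrac{q}{q-1} - \varepsilon\Bigr) q^{\deg a + n}$$
uniquely at $i = \deg a + n$ when $\varepsilon \in (0,1)$. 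In that subcase the ultrametric inequality gives $v_\infty(e_C(\widetilde{\pi} a z)) = T_{\deg a + n}$, proving (i).

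The delicate point is the boundary case $\varepsilon = 0$, where $-v_\infty(z) = n \in \mathbb{Z}_{\ge 0}$ and the two summands at $i = \deg a + n$ and $i = \deg a + n + 1$ share the minimal valuation, so one must rule out cancellation. The hypothesis $|z|_i = |z| = q^n$ with $n \in \mathbb{Z}$ forces the leading coefficient $\lambda \in \overline{\BF}_q$ of $z$ (in its $T$-expansion) to lie outside $\BF_q$, for otherwise $\lambda T^n \in k_\infty$ would allow a strict decrease of $|z - \cdot|$. Writing $\widetilde{\pi} = \mu T^{q/(q-1)}(1 + o(1))$ with $\mu^{q-1} = -1$, and $\sgn(a) = \nu \in \BF_q^\times$, I extract the coefficient of $T^{q^{\deg a + n + 1}/(q-1)}$ in each of the two dominant terms; these come out to be $(\mu\nu\lambda)^{q^{\deg a + n}}$ and $(\mu\nu\lambda)^{q^{\deg a + n + 1}}$, so their sum factors as
$$(\mu\nu\lambda)^{q^{\deg a + n}}\bigl(1 - \lambda^{q^{\deg a + n}(q-1)}\bigr),$$
using $\nu^{q-1} = 1$ and the identity $\mu^{q^k(q-1)} = (-1)^{q^k} = -1$ (valid in every characteristic). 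Since Frobenius preserves the degree over $\BF_q$, the condition $\lambda \notin \BF_q$ gives $\lambda^{q^{\deg a + n}} \notin \BF_q$, so $\lambda^{q^{\deg a + n}(q-1)} \ne 1$ and no cancellation occurs. This finishes (i).

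For (ii), observe that $\delta_a(z) = e_C(\widetilde{\pi} a z) - \widetilde{\pi} a z = \sum_{i \ge 1} (\widetilde{\pi} a z)^{q^i}/D_i$, so the same valuation analysis applies to the sum restricted to $i \ge 1$. When $\deg a + n \ge 1$, the minimum $T_{\deg a + n}$ still lies in this range and the non-cancellation argument of the previous paragraph still applies, yielding $v_\infty(\delta_a(z)) = -(\tfrac{q}{q-1} - \varepsilon) q^{\deg a + n}$. The only remaining case is $\deg a = n = 0$, which forces $\varepsilon = 0$; here one checks directly from the explicit formula for $T_i$ that $T_1 = -q/(q-1)$ is the strict minimum of $T_i$ over $i \ge 1$, matching the claimed value.
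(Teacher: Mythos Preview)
Your argument is correct. The valuations $v_\infty(\widetilde{\pi})=-q/(q-1)$ and $v_\infty(D_i)=-iq^i$ are right, the discrete derivative computation locating the minimum at $i=\deg a+n$ is accurate, and your treatment of the boundary case $\varepsilon=0$ is the crux: the observation that $|z|_i=|z|$ with $v_\infty(z)\in\BZ$ forces the residue $\lambda$ of $z/T^n$ to lie in $\overline{\BF}_q\setminus\BF_q$, and then the factorization $(\mu\nu\lambda)^{q^m}\bigl(1-\lambda^{q^m(q-1)}\bigr)$ with $\mu^{q-1}=-1$ and $\nu^{q-1}=1$ shows no cancellation. The case split in (ii) is also handled correctly, including the endpoint $\deg a=n=0$.

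By way of comparison: the paper does not give a proof here at all, merely citing \cite[Lemma~2.6.1]{Brown} and its proof. Your write-up is therefore not a different route but rather an explicit unpacking of that reference. One small presentational remark: speaking of ``the coefficient of $T^{q^{m+1}/(q-1)}$'' is slightly informal since $T^{1/(q-1)}$ is not a uniformizer of an arbitrary finite extension $L\supset k_\infty(\widetilde{\pi},z)$; a cleaner phrasing is to compute the residue of the ratio of the two dominant terms (which has valuation $0$) and show it is $-\lambda^{q^m(q-1)}\neq -1$. But this is cosmetic; your computation is equivalent and correct.
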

\begin{proof} The assertions are  consequences  of \cite{Brown}, Lemma 2.6.1 and its proof.

\end{proof}

For $m\in \mathbb N,$ we set:
$$A_{+,m}=\{ a\in A, \, a \, {\rm monic}\, , \, \deg a=m\}.$$
As a consequence of the Lemma \ref{Brown}, we get:

\begin{lem} \label{nonvanishing} Let $z \in D.$ Let $n\geq 0$ be the smallest integer such that $n\geq -v_\infty(z).$ Let's write  $-v_\infty(z) = n-\varepsilon, \varepsilon \in [0, 1[.$ Let $\delta, \mu, \nu \in \mathbb N$  such that $\delta \geq \nu+1.$  Then the sum
\begin{equation*}
\sum_{m\geq0}\sum_{a\in A_{+,m}} a^\mu t(az)^{\delta}\delta_a(z)^{\nu}
\end{equation*}
converges in $\bar k_\infty.$ If moreover $ \mu  <q^n(\delta-\nu)(q-\varepsilon (q-1)),$ then:
$$v_\infty\left(\sum_{m\geq0}\sum_{a\in A_{+,m}} a^\mu t(az)^{\delta}\delta_a(z)^{\nu}\right)= (\delta-\nu) \left(\frac{q}{q-1}-\varepsilon\right) q^n.$$
\end{lem}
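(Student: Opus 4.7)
My plan is to invoke Lemma~\ref{Brown} termwise to compute the valuation of each summand, and then use the ultrametric inequality to identify the dominant contribution. Concretely, for $m\in\mathbb{N}$ and $a\in A_{+,m}$ we have $v_\infty(a)=-m$, so combining parts (i) and (ii) of Lemma~\ref{Brown} gives
\[
v_\infty\bigl(a^\mu t(az)^\delta \delta_a(z)^\nu\bigr) = -\mu m + (\delta-\nu)\Bigl(\tfrac{q}{q-1}-\varepsilon\Bigr) q^{n+m} =: C_m.
\]
Since $\delta-\nu\geq 1$ and $\tfrac{q}{q-1}-\varepsilon>0$, the exponential term dominates the linear one and $C_m\to+\infty$ as $m\to\infty$. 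Because each slice $A_{+,m}$ is finite and $\bar k_\infty$ is non-archimedean, this will immediately yield convergence of the double sum in $\bar k_\infty$.

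For the exact valuation, I will exploit the fact that the $m=0$ slice contains the \emph{unique} term $t(z)^\delta\delta_1(z)^\nu$, whose valuation is exactly $C_0 = (\delta-\nu)(\tfrac{q}{q-1}-\varepsilon)q^n$. For $m\geq 1$ the ultrametric inequality yields only $v_\infty\bigl(\sum_{a\in A_{+,m}}\cdots\bigr)\geq C_m$. The plan is then to prove that the hypothesis $\mu<q^n(\delta-\nu)(q-\varepsilon(q-1))$ is equivalent to the strict inequality $C_m>C_0$ for every $m\geq 1$. Writing $C_m-C_0 = -\mu m + (\delta-\nu)(\tfrac{q}{q-1}-\varepsilon)q^n(q^m-1)$, this reduces to requiring $\mu < (\delta-\nu)(\tfrac{q}{q-1}-\varepsilon)q^n\cdot\frac{q^m-1}{m}$ for all $m\geq 1$. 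The binding case is $m=1$, since $m\mapsto (q^m-1)/m$ is non-decreasing on $\mathbb{N}_{\geq 1}$ (a direct consequence of Bernoulli's inequality $q^m\geq 1+m(q-1)$, strict for $q\geq 2$ and $m\geq 2$); at $m=1$ the inequality reads exactly $\mu<q^n(\delta-\nu)(q-\varepsilon(q-1))$.

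Once this monotonicity check is made, the strong triangle inequality applied to the double sum, decomposed by the degree slices, forces its valuation to equal that of the unique strictly smallest term, namely $C_0$. The main obstacle is purely bookkeeping: translating the hypothesis on $\mu$ into the uniform domination $C_m>C_0$ for $m\geq 1$. No delicate cancellation can occur because the $m=0$ slice has only one summand, so its valuation is known exactly, and the strict inequality with respect to all other slices suffices to conclude.
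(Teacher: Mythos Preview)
Your argument is correct and follows essentially the same route as the paper's proof: compute the valuation $C_m$ of each term via Lemma~\ref{Brown}, deduce convergence from $C_m\to+\infty$, and then show that under the hypothesis on $\mu$ the unique $m=0$ term has strictly smallest valuation, so it dictates the valuation of the whole sum. Your reduction to the binding case $m=1$ via the monotonicity of $m\mapsto (q^m-1)/m$ is slightly more explicit than the paper, which simply records $\tfrac{q}{q-1}-\varepsilon>\tfrac{1}{q-1}$ and asserts the inequality $C_m>C_0$ for $m\geq 1$; but both are the same computation.
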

\begin{proof}  Let's set $\gamma=\delta-\nu \in \mathbb N.$  Let $m\geq 0$ and $a\in A_{+,m}$. Then by Lemma \ref{Brown}:
$$v_\infty(a^{\mu}t(az)^{\delta} \delta_a(z)^{\nu})=  \gamma \left(\frac{q}{q-1}-\varepsilon\right)q^{n+m}-\mu m.$$
Since $ q/(q-1)-\varepsilon >0$ we get the convergence assertion.
Suppose now that $\mu < \gamma q^n(q-\varepsilon (q-1))$.
Noticing that
$$\frac{q}{q-1}-\varepsilon > \frac{1}{q-1},$$ 
we see that for $m\geq 1$ we have
$$\gamma\left(\frac{q}{q-1}-\varepsilon\right)q^n (q^{m}-1)-\mu m>0.$$
Hence for every $m\ge 1$ and $a\in A_{+,m}$ we get
$$ v_\infty(a^\mu t(az)^{\delta}\delta_a(z)^{\nu}) > v_\infty(t(z)^{\delta}\delta_1(z)^{\nu}).$$
Thus:
$$ \sum_{m\geq0}\sum_{a\in A_{+,m}} a^\mu t(az)^{\delta} \delta_a(z)^{\nu}= t(z)^{\delta} \delta_1(z)^{\nu} (1+u)\ {\rm where}\ v_\infty (u)>0.$$
The claim follows.
\end{proof}

We will also need the following basic Lemma in the sequel:\par
\begin{lem}\label{inseparable} 
Let $z\in \bar k_\infty.$ \par
\noindent The following assertions are equivalent:\par
 i) $z\not  \in k_\infty^{sep},$\par
 ii) $[k_\infty(\widetilde{\pi},z):k_\infty(\widetilde{\pi},z^p)]\geq 2,$\par
 iii) $[k_\infty(\widetilde{\pi},z):k_\infty(\widetilde{\pi},z^p)]=p.$\par
\end{lem}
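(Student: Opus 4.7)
The plan is to first dispose of (iii) $\Rightarrow$ (ii) which is tautological, and then to establish an a priori dichotomy: the degree $[k_\infty(\widetilde{\pi},z):k_\infty(\widetilde{\pi},z^p)]$ is always either $1$ or $p$. Indeed, over the field $k_\infty(\widetilde{\pi},z^p)$ the element $z$ is a root of $X^p - z^p = (X-z)^p$, so its minimal polynomial divides $(X-z)^p$ and is therefore of the form $(X-z)^d$ with $d\mid p$, forcing $d\in\{1,p\}$. This immediately yields (ii) $\Rightarrow$ (iii). What remains is the equivalence (i) $\Leftrightarrow$ (ii).

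To obtain (i) $\Leftrightarrow$ (ii) I would set $F=k_\infty(\widetilde{\pi})$. Since $\widetilde{\pi}\in k_\infty^{sep}$ the extension $F/k_\infty$ is separable. I would then invoke the standard fact that for any separable algebraic extension $F/K$, one has $F^{sep}=K^{sep}$ inside a common algebraic closure: this follows from transitivity of separability (an element separable over $F$ is separable over $K$ because $F/K$ is) and from the trivial direction (an element separable over $K$ remains separable over the overfield $F$). Applied here, this gives $z\in k_\infty^{sep}\iff z\in F^{sep}$.

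The last step is the classical characterization of separability in characteristic $p$: an algebraic element $z$ over $F$ is separable over $F$ if and only if $F(z)=F(z^p)$. One direction is immediate — if $z$ is separable over $F$ it is a fortiori separable over $F(z^p)$, while simultaneously $z$ is purely inseparable over $F(z^p)$ because it satisfies $X^p-z^p=0$, so the two forces together put $z\in F(z^p)$. For the converse, if $F(z)=F(z^p)$ one writes $z=P(z^p)$ with $P\in F[X]$; then $z$ is a root of the polynomial $X-P(X^p)\in F[X]$, whose derivative is $1$, hence $z$ is separable. Combining with the previous paragraph, $z\in k_\infty^{sep}$ if and only if $[k_\infty(\widetilde{\pi},z):k_\infty(\widetilde{\pi},z^p)]=1$, which is precisely the negation of (ii). Together with the dichotomy established at the start, this closes the triangle of equivalences.

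There is no real obstacle in this argument; the only point requiring a touch of care is the transfer of separability from $k_\infty$ to $F=k_\infty(\widetilde{\pi})$, which is why I would isolate the statement $F^{sep}=k_\infty^{sep}$ explicitly and deduce the lemma from it together with the ``$F(z)=F(z^p)$'' criterion. Everything else is a short bookkeeping exercise in degree-$p$ purely inseparable extensions.
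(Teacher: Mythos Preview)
Your proof is correct. The overall shape matches the paper's: both note that $k_\infty(\widetilde\pi,z)/k_\infty(\widetilde\pi,z^p)$ is purely inseparable of degree at most $p$ (so the degree is $1$ or $p$, giving (ii)$\Leftrightarrow$(iii)), and both obtain ``$z$ separable $\Rightarrow$ degree $1$'' from the standard ``separable and purely inseparable forces trivial'' argument.

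The one substantive difference is in the converse direction ``degree $1\Rightarrow z$ separable''. You invoke the textbook criterion directly: from $z=P(z^p)$ with $P\in F[X]$ you observe that $z$ is a root of $X-P(X^p)$, whose derivative is $1$, hence $z$ is separable over $F=k_\infty(\widetilde\pi)$, and then $F^{sep}=k_\infty^{sep}$ finishes. The paper instead iterates: from $z\in F(z^p)$ it deduces $z\in F(z^{p^m})$ for all $m\ge 1$ (applying Frobenius to the relation $z=P(z^p)$ gives $z^p\in F(z^{p^2})$, whence $z=P(z^p)\in F(z^{p^2})$, and so on), and then uses that $z^{p^m}\in k_\infty^{sep}$ for $m$ large enough to reach a contradiction. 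Your route is a bit more streamlined; the paper's iteration is more hands-on but equally elementary.
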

\begin{proof}
Observe that $k_\infty(\widetilde{\pi},z)/k_\infty(\widetilde{\pi},z^p)$ is a purely inseparable extension. Thus if $z\in k_\infty^{sep},$ then $k_\infty(\widetilde{\pi},z)\subset k_\infty ^{sep}$  and therefore  $k_\infty(\widetilde{\pi},z)=k_\infty(\widetilde{\pi},z^p).$\par
\noindent Now let's assume that $z\not \in k_\infty^{sep}.$ We will prove that ii) holds. Otherwise:
$$z\in k_\infty(\widetilde{\pi},z^p).$$
Thus, for all $m\geq 1$:
$$z\in k_\infty(\widetilde{\pi},z^{p^m}).$$
We get a contradiction since there exists an integer $m\geq 1$ such that $z^{p^m}\in k_\infty^{sep}.$ The first assertion follows.\par
\end{proof}

\begin{prop}\label{separabilityj} Let $x\in \bar{k}_\infty\setminus k_\infty.$ The following assertions are equivalent:\par
\noindent i) $x\in k_\infty^{sep},$\par
\noindent ii) $g(x), \Delta(x) \in k_\infty^{sep},$\par
\noindent iii) $j(x) \in k_\infty^{sep}.$
\end{prop}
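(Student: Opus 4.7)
The plan is to prove the cyclic chain (i) $\Rightarrow$ (ii) $\Rightarrow$ (iii) $\Rightarrow$ (i). For (i) $\Rightarrow$ (ii), I fix a finite separable extension $L/k_\infty$ containing both $\tilde{\pi}$ and $x$. Since $L$ is complete, the standard series and product formulas for $g(z)$ and $\Delta(z)$ in terms of the parameters $t(az)=1/e_C(\tilde\pi a z)$ (for $a\in A_+$) converge in $L$, so $g(x),\Delta(x)\in L\subseteq k_\infty^{sep}$. (Alternatively, the lattice expansion $e_{\Lambda_x}(v)=v\prod_{\lambda\neq 0}(1-v/\lambda)$ shows directly that $g(x),\Delta(x)\in k_\infty(x)\subseteq k_\infty^{sep}$.) The step (ii) $\Rightarrow$ (iii) is immediate from $j(x)=g(x)^{q+1}/\Delta(x)$.

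For the main implication (iii) $\Rightarrow$ (i), I argue by contraposition. Suppose $x\notin k_\infty^{sep}$ and set $L=k_\infty(\tilde{\pi},x)$ and $L_0=k_\infty(\tilde{\pi},x^p)$. By Lemma~\ref{inseparable}, the extension $L/L_0$ is purely inseparable of degree $p$, so $L^p\subseteq L_0$. I claim $j(x)\notin L_0$. Granting this, since $j(x)\in L$ and $j(x)^p\in L^p\subseteq L_0$, the degree $[k_\infty(\tilde{\pi},j(x)):k_\infty(\tilde{\pi},j(x)^p)]$ equals $p$, and a second application of Lemma~\ref{inseparable} to $j(x)$ yields $j(x)\notin k_\infty^{sep}$.

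To prove the claim $j(x)\notin L_0$, I exploit the decomposition $e_C(\tilde{\pi}ax)=\tilde{\pi}ax+R_a$ with $R_a:=\sum_{i\ge 1}(\tilde{\pi}ax)^{q^i}/D_i\in L_0$ (since $x^{q^i}$ is a $p$-th power for $i\ge 1$). Setting $u_a:=\tilde{\pi}ax+R_a$ and $M_a:=u_a^q=(\tilde{\pi}ax)^q+R_a^q\in L_0$, the identity
\[
t(ax)^{q-1}=\frac{u_a}{u_a^q}=\frac{\tilde{\pi}a}{M_a}\,x+\frac{R_a}{M_a}
\]
expresses $t(ax)^{q-1}$ in the $L_0$-basis $\{1,x,\ldots,x^{p-1}\}$ of $L$ with nonzero $x$-coefficient $\tilde{\pi}a/M_a$. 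Combining Lemma~\ref{Brown} with Lemma~\ref{nonvanishing}, the $x$-coefficients $\tilde{\pi}a/M_a$ for $\deg a\ge 1$ have strictly larger $v_\infty$ than the $a=1$ coefficient $\tilde{\pi}/M_1$; the ultrametric inequality then forces the $x$-coefficient of $\sum_{a\in A_+}t(ax)^{q-1}$ to be nonzero, giving $g(x)\notin L_0$. An analogous analysis of the product expansion of $\Delta(z)$, together with the leading-order identity $j(z)\sim -1/t(z)^{q-1}$ (justified by Lemma~\ref{nonvanishing}), shows that $j(x)$ itself has a nonzero $x$-component, so $j(x)\notin L_0$.

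The main obstacle is the last step: a priori the $x$-components of $g(x)^{q+1}$ and of $\Delta(x)$ could conspire to cancel in the quotient $j(x)=g(x)^{q+1}/\Delta(x)$. Lemma~\ref{nonvanishing} is exactly what rules this out, by isolating a single dominant inseparable contribution to each expansion and forcing all correction terms to live at strictly larger valuations, so that the leading $x$-component of $j(x)$ survives in the ultrametric topology.
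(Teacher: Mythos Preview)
Your implications (i)$\Rightarrow$(ii) and (ii)$\Rightarrow$(iii) are fine and match the paper. The difficulty, as you anticipate, is (iii)$\Rightarrow$(i), and here your argument has two genuine gaps.

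First, Lemmas~\ref{Brown} and~\ref{nonvanishing} are stated only for $z\in\mathcal D$, the fundamental domain. Your $x$ is an arbitrary element of $\bar k_\infty\setminus k_\infty$, and you never reduce to $\mathcal D$. This is fixable (translate by $\GL_2(A)$, which preserves both $j$ and separability), but you have not done it.

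Second, and more seriously, your claim that $j(x)\notin L_0$ is not proved. Your argument does establish $g(x)\notin L_0$ (this is essentially the paper's proof of Proposition~\ref{separabilityjD} for $g$), but then you write that ``an analogous analysis \dots\ shows that $j(x)$ itself has a nonzero $x$-component'', and in your last paragraph you openly acknowledge the obstacle: the $x$-components of $g(x)^{q+1}$ and of $\Delta(x)$ might cancel in the quotient. You assert that Lemma~\ref{nonvanishing} rules this out, but you never carry out that computation. Lemma~\ref{nonvanishing} gives \emph{valuation} information about certain sums; passing from that to control of the $x$-coordinate of a \emph{ratio} in the $L_0$-basis $\{1,x,\dots,x^{p-1}\}$ is a nontrivial step that you have skipped. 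In fact the paper itself does not attempt to prove $j(z)\notin L_0$ by this analytic route: it proves the $g$ and $\Delta$ statements separately in Proposition~\ref{separabilityjD} (only for $z\in\mathcal D$), while the $j$ statement for general $x$ comes from a different argument.

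The paper's proof of (iii)$\Rightarrow$(i) is entirely different and avoids both problems. Assuming $j:=j(x)\in k_\infty^{sep}$ (and $j\neq 0$), it writes down the Drinfeld module $\phi_T=T+\tau+\frac{1}{j}\tau^2$ over $k_\infty^{sep}$, takes its period lattice $\pi_1 A\oplus\pi_2 A$, and observes that for $\deg a\gg 0$ the torsion points $\exp_\phi(\pi_i/a)$ are separable over $k_\infty(j)$; applying the formal logarithm $\log_\phi\in 1+k_\infty^{sep}\{\{\tau\}\}\tau$ recovers $\pi_i/a\in k_\infty^{sep}$. Hence $z=\pi_2/\pi_1\in k_\infty^{sep}$, and since $x\in\GL_2(A)\cdot z$, also $x\in k_\infty^{sep}$. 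This is conceptual, works for arbitrary $x$, and never faces a cancellation issue.
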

\begin{proof}  Let's recall that the fonction $j$ induces a bijection:
$$GL_2(A)\backslash (\bar{k}_\infty\setminus k_\infty)\rightarrow \bar{k}_\infty.$$
Let's observe that $\Lambda_x \subset k_\infty(x).$ Let:
$$e^{(x)}(X)= X\prod_{u\in \Lambda_x \setminus\{0\}} \left(1-\frac{X}{u}\right).$$
Then $e^{(x)}\in k_\infty(x) \{\{\tau\}\}.$ Since in $\mathbb C_\infty \{\{ \tau\}\}$ we have
$$\phi^{(x)}_T e^{(x)}= e^{(x)} T,$$
 then $g(x), \Delta(x) \in k_\infty(x).$ Thus i) implies ii). It is obvious that ii) implies iii). Let's prove that iii) implies i).\par
 \noindent  Since $j^{-1}(\{0\})= GL_2(A) (\mathbb F_{q^2}\setminus \mathbb F_q) \subset k_\infty^{sep},$ we can assume that $j:=j(x) \not =0.$ Let $\phi$ be the rank two Drinfeld module given by:
 $$\phi_T=T+\tau +\frac{1}{j} \tau^2\in k_\infty^{sep}\{\tau\}.$$
 Let  $\exp_\phi$ be the exponential function attached to $\phi,$  $\Lambda ={\rm Ker \exp_\phi} = \pi_1A\oplus \pi_2 A,$ and set $z= \frac{\pi_2}{\pi_1} \in \bar {k}_\infty \setminus k_\infty.$ Then:
 $$z\in GL_2(A).x\, .$$
 Now, there exists  a unique element $\log_\phi \in 1+k_\infty^{sep}\{\{\tau\}\}\tau$ such that:
 $$\log_\phi \phi_T= T\log_\phi.$$
 Let $a\in A$ such that $\deg a\gg 0,$ then:
 $$\frac{\pi_i}{a}=\log_\phi \left(\exp_\phi\left(\frac{\pi_i}{a}\right)\right), i=1,2.$$
But observe that $\exp_\phi(\frac{\pi_i}{a})\in k_\infty(j)^{sep}\subset k_\infty^{sep}, , i=1,2.$ Thus : $\pi_1, \pi_2 \in k_\infty(j)^{sep}, i=1,2$ and therefore $z\in k_\infty^{sep}.$ We conclude that $x\in k_\infty^{sep}.$
 \end{proof}

\begin{rem} A special case of the above Proposition was already observed by Schweizer (\cite{Sch97}, Lemma 4).
\end{rem}

It is possible to improve Proposition~\ref{separabilityj} when the element $x$ belongs to the fundamental domain $D$. We do not know
if Proposition~\ref{separabilityjD} remains true when $z\in \bar k_\infty\setminus D$.

\begin{prop}\label{separabilityjD} Let $z\in D.$ The following assertions are equivalent:\par
\noindent i) $z\in k_\infty^{sep},$\par
\noindent ii) $j(z) \in k_\infty^{sep},$\par
\noindent iii) $g(z) \in k_\infty^{sep},$\par
\noindent iv) $\Delta(z) \in k_\infty^{sep}.$
\end{prop}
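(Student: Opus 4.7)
The plan is to establish $(iii) \Rightarrow (i)$ and $(iv) \Rightarrow (i)$, since Proposition~\ref{separabilityj} already provides $(i) \Leftrightarrow (ii)$ along with $(i) \Rightarrow (iii)$ and $(i) \Rightarrow (iv)$. The idea is to exhibit a derivation that detects inseparability of $z$ and to show that it does not annihilate $g(z)$ or $\Delta(z)$.

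First I would set up the derivation. Suppose $z \in D$ satisfies $z \notin k_\infty^{sep}$. By Lemma~\ref{inseparable}, the extension $k_\infty(\widetilde{\pi}, z)/k_\infty(\widetilde{\pi}, z^p)$ is purely inseparable of degree $p$, so there exists a unique $k_\infty(\widetilde{\pi}, z^p)$-linear derivation $\partial$ on $k_\infty(\widetilde{\pi}, z)$ with $\partial z = 1$. Any element $\alpha \in k_\infty(\widetilde{\pi}, z) \cap k_\infty^{sep}$ must lie in $k_\infty(\widetilde{\pi}, z^p)$ (being simultaneously separable and purely inseparable over $k_\infty(\widetilde{\pi}, z^p)$), hence satisfies $\partial \alpha = 0$; contrapositively, $\partial \alpha \neq 0$ forces $\alpha \notin k_\infty^{sep}$. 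Since $g(z), \Delta(z) \in k_\infty(z) \subseteq k_\infty(\widetilde{\pi}, z)$ from the proof of Proposition~\ref{separabilityj}, it will then suffice to prove $\partial g(z) \neq 0$ and $\partial \Delta(z) \neq 0$.

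For $\partial g(z)$, I would use the standard $t$-expansion $g(z) = \widetilde{\pi}^{q-1}\bigl(1 - (T^q - T) \sum_{a \in A_+} t(az)^{q-1}\bigr)$ together with the identity $\partial t(az) = -a\widetilde{\pi}\, t(az)^2$, which follows from $t(z) = 1/e_C(\widetilde{\pi} z)$ and $e_C'(X) = 1$. Using $-(q-1) \equiv 1 \pmod{p}$, term-wise differentiation yields
\[
\partial g(z) = -\widetilde{\pi}^{q}(T^q - T) \sum_{a \in A_+} a\, t(az)^q,
\]
and Lemma~\ref{nonvanishing} applied with $(\mu, \delta, \nu) = (1, q, 0)$ (its hypothesis $1 < q^{n+1}(q - \varepsilon(q-1))$ is immediate for every $z \in D$) provides both convergence and non-vanishing.

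For $\partial \Delta(z)$, I would take the logarithmic derivative of the product expansion $\Delta(z) = -\widetilde{\pi}^{q^2-1} t(z)^{q-1} \prod_{a \in A_+}\bigl(t(z)^{|a|}/t(az)\bigr)^{(q^2-1)(q-1)}$. The crucial simplification is that in characteristic $p$ one has $|a| = q^{\deg a} \equiv 0 \pmod{p}$ for $\deg a \geq 1$, which erases what would otherwise be a divergent $\sum_a |a|\, t(z)$. After combining with $-(q-1) \equiv 1$ and $(q^2-1)(q-1) \equiv 1 \pmod{p}$, the logarithmic derivative collapses to
\[
\frac{\partial \Delta(z)}{\Delta(z)} = \widetilde{\pi} \sum_{a \in A_+} a\, t(az),
\]
which Lemma~\ref{nonvanishing} with $(\mu, \delta, \nu) = (1, 1, 0)$ shows to be non-zero for every $z \in D$ (since $\varepsilon < 1$ makes $1 < q^n(q - \varepsilon(q-1))$). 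Combined with $\Delta(z) \neq 0$, this forces $\partial \Delta(z) \neq 0$. The main obstacle is to justify rigorously the compatibility of the algebraic derivation $\partial$ with term-wise differentiation of the analytic $t$-expansions, and to handle cleanly the characteristic-$p$ cancellations that make the $\Delta$ log-derivative finite; the $g$-case falls out directly from Lemma~\ref{nonvanishing}, but the $\Delta$-case hinges on the delicate arithmetic identity $|a| \equiv 0 \pmod{p}$ for non-constant $a$.
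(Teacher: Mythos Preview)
Your approach is correct, and the concerns you flag are not real obstacles. Since $F := k_\infty(\widetilde{\pi}, z^p)$ is a complete field and $k_\infty(\widetilde{\pi}, z)$ is a $p$-dimensional $F$-vector space, the $F$-linear map $\partial$ is automatically continuous, which justifies term-wise differentiation of convergent series and log-derivatives of convergent products. Your worry about a divergent $\sum_a |a|\,t(z)$ is also misplaced: for each $a$ with $\deg a \ge 1$ the integer $(q^2-1)(q-1)|a|$ already vanishes in characteristic $p$, so the log-derivative of the $a$-th factor is simply $\widetilde{\pi}\,a\,t(az)$ \emph{before} any summation, and no formal cancellation is needed.

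For $g$ your argument and the paper's coincide: the paper writes $t(az)^{q-1} = t(az)^q\bigl(\widetilde{\pi} a z + \delta_a(z)\bigr)$ with $t(az)^q,\,\delta_a(z)\in F$, reads off the $z$-coefficient of $g(z)$ in the basis $1,z,\ldots,z^{p-1}$ as (a constant times) $\sum_a a\,t(az)^q$, and invokes Lemma~\ref{nonvanishing}; your $\partial$ is exactly the ``coefficient of $z$'' map and lands on the same sum. For $\Delta$ the paper takes a genuinely different route: rather than Gekeler's product expansion and a log-derivative, it quotes L\'opez's identity $\widetilde{\pi}^{1-q^2}\Delta(z) = -\sum_{a\in A_+} a^{q(q-1)} t(az)^{q-1}$ and repeats the $g$-argument verbatim, extracting the $z$-coefficient $-\widetilde{\pi}\sum_a a^{q(q-1)+1} t(az)^q$ and applying Lemma~\ref{nonvanishing} with $(\mu,\delta,\nu)=(q^2-q+1,q,0)$. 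The paper's route is a bit more direct (no product manipulation, no continuity argument) at the cost of an extra reference; yours stays within the standard $t$-expansions and yields the pleasantly simple formula $\partial\Delta/\Delta = \widetilde{\pi}\sum_a a\,t(az)$.
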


\begin{proof}
We have already seen in Proposition~\ref{separabilityj} that $i)\Leftrightarrow ii)$. Let us prove that $i)\Leftrightarrow iii)$.
We have:
$$g(z) = \widetilde{\pi}^{q-1} \left(1-(T^q-T)\sum_{a\in A_+}t(az)^{q-1}\right) \in k_\infty(\widetilde{\pi}, z).$$
Thus if $z\in k_\infty^{sep}$ then $ g(z) \in k_\infty^{sep}.$\par
\noindent Let's assume $z\not \in k_\infty^{sep}$ and let's set $F= k_\infty(\widetilde {\pi},z^p).$ Then by Lemma \ref{inseparable}:
$$F(z)=\bigoplus_{k=0}^{p-1} Fz^k.$$
Now observe that:
$$t(az)^{-1} = e_C(\widetilde{\pi} az)= \widetilde{\pi} az+ \delta_a(z), \delta_a(z) \in F.$$
By Lemma \ref{nonvanishing}, we deduce that:
$$g(z) =   \widetilde{\pi}^{q-1} \left(1-\Big(\widetilde{\pi}(T^q-T)\sum_{a\in A_+}at(az)^{q} \Big)z +\xi\right), \xi \in F.$$
Now, again by Lemma \ref{nonvanishing}:
$$\widetilde{\pi}(T^q-T)\sum_{a\in A_+}at(az)^{q} \in F^\times.$$
Thus $g(z)$ writes $g(z)=\xi_1 z +\xi_2$ with $\xi_1\in F^{\times}$ and $\xi_2\in F$. It follows that $g(z)\notin F$, hence $g(z)\not \in k_\infty (\widetilde{\pi}, g(z)^p)$
since $k_\infty (\widetilde{\pi}, g(z)^p)\subset F$.
We conclude by Lemma~\ref{inseparable} that  $g(z)\not \in k_\infty^{sep}.$

Let us now prove the equivalence $i)\Leftrightarrow iv)$. We already know that $i) \Rightarrow iv)$, so let us prove the converse.
By \cite{Lop2010}, we have:
$$\widetilde{\pi} ^{1-q^2}\Delta(z)=  -\sum_{a\in A_+} a^{q(q-1)} t(az)^{q-1}.$$
Let's assume that $ z\not \in k_\infty^{sep}$ and let $F=k_\infty(\widetilde{\pi}, z^p).$ Then, by Lemma \ref{nonvanishing}, and in the same way as before we deduce that:
$$\widetilde{\pi} ^{1-q^2}\Delta(z)=\left(-\widetilde{\pi}\sum_{a\in A_+} a^{q(q-1)+1} t(az)^{q}\right) z +\xi , \quad \xi \in F.$$
Now, by Lemma \ref{nonvanishing}, we get:
$$\sum_{a\in A_+} a^{q(q-1)+1} t(az)^{q}\not =0$$
(note that in Lemma \ref{nonvanishing}, if $n=0$ then $\varepsilon=0$ since $z\in D$).
It follows that $\Delta(z) \notin k_\infty^{sep}$ as before.
\end{proof}

\end{document}